\def\@secnumfont{\bfseries}
\newcommand{\bh}{\mathcal{B}(\mathcal{H})}
\newcommand{\dist}{\text{dist}}
\newcommand{\ee}{\varepsilon}
\newcommand{\id}{\text{id}}
\newcommand{\into}{\hookrightarrow}
\newcommand{\I}{\text{I}}
\newcommand{\II}{\text{II}}
\newcommand{\III}{\text{III}}
\newcommand{\kpn}{$[\text{KP}_n]$\, }
\newcommand{\MvN}{\sim_{\text{MvN}}}
\newcommand{\ov}{\overline{\otimes}}
\newcommand{\rank}{\text{rank}}
\newcommand{\tr}{\text{tr}}
\DeclareMathOperator{\Aut}{Aut}
\DeclareMathOperator{\End}{End}
\DeclareMathOperator{\ind}{ind}
\newenvironment{cor}{\subsection{}{\textbf {Corollary.}}\em}{}
\newenvironment{defn}{\subsection{}{\textbf {Definition.}}\em}{\smallskip}
\newenvironment{lemma}{\subsection{}{\textbf {Lemma.}}\em}{\smallskip}
\newenvironment{prop}{\subsection{}{\textbf {Proposition.}}\em}{\smallskip}
\newenvironment{remark}{\subsection{}{\textbf {Remark.}}}{\smallskip}
\newenvironment{theorem}{\subsection{}{\textbf {Theorem.}}\em}{\smallskip}
\newenvironment{ques}{\subsection{}{\textbf{Question.}}}{\smallskip}
\newcommand\cB{\ensuremath{\mathcal B}}
\newcommand\cD{\ensuremath{\mathcal D}}
\newcommand\cH{\ensuremath{\mathcal H}}
\newcommand\cM{\ensuremath{\mathcal M}}
\newcommand\cO{\ensuremath{\mathcal O}}
\newcommand\cQ{\ensuremath{\mathcal Q}}
\newcommand\cR{\ensuremath{\mathcal R}}
\newcommand\cZ{\ensuremath{\mathcal Z}}
\newcommand\bA{\ensuremath{\mathbb A}}
\newcommand\bB{\ensuremath{\mathbb B}}
\newcommand\bC{\ensuremath{\mathbb C}}
\newcommand\bI{\ensuremath{\mathbb I}}
\newcommand\bK{\ensuremath{\mathbb K}}
\newcommand\bM{\ensuremath{\mathbb M}}
\newcommand\bN{\ensuremath{\mathbb N}}
\newcommand\bR{\ensuremath{\mathbb R}}
\newcommand\bT{\ensuremath{\mathbb T}}
\newcommand\bZ{\ensuremath{\mathbb Z}}
\newenvironment{question}{\subsection{}{\textbf{Question.}}}{\smallskip}
\newcommand{\etalchar}[1]{$^{#1}$}
\providecommand{\bysame}{\leavevmode\hbox to3em{\hrulefill}\thinspace}
\providecommand{\MR}{\relax\ifhmode\unskip\space\fi MR }
\providecommand{\href}[2]{#2}
\begin{document}

%%%%%%%%%%%%%%%%%%%%%
%Title
%%%%%%%%%%%%%%%%%%%%%

\title{Kaplansky's problem and unitary orbits in matrix amplifications}

%%%%%%%%%%%%%%%%%%%%%
% Authorship
%%%%%%%%%%%%%%%%%%%%%

\thanks{${}^1$ Research supported in part by NSERC (Canada)}
\thanks{${}^2$ Research supported in part by National Natural Science Foundation of China (No.: 12471123).}

\thanks{{\ifcase\month\or Jan.\or Feb.\or March\or April\or May\or
June\or
July\or Aug.\or Sept.\or Oct.\or Nov.\or Dec.\fi\space \number\day,
\number\year}}
\author
	[Laurent W. Marcoux]{{Laurent W.~Marcoux${}^1$}}
\address
	{Department of Pure Mathematics\\
	University of Waterloo\\
	Waterloo, Ontario \\
	Canada  \ \ \ N2L 3G1}
\email{\href{mailto:Laurent.Marcoux@uwaterloo.ca}{Laurent.Marcoux@uwaterloo.ca}}

%
%% Pawel 0000-0002-6941-264X
%\author[Pawel Sarkowicz]{Pawel Sarkowicz}
%\address[Pawel Sarkowicz]{Department of Pure Mathematics, University of Waterloo, N2L 3G1, Canada}
%\email{\href{mailto:psarkowi@uwaterloo.ca}{psarkowi@uwaterloo.ca}}
%
%% Yuanhang
%\author[Yuanhang Zhang]{Yuanhang Zhang${}^2$}
%\address[Yuanhang Zhang]{School of mathematics, Jilin University, Changchun 130012, PR China}
%\email{\href{mailto:zhangyuanhang@jlu.edu.cn}{zhangyuanhang@jlu.edu.cn}}
%
%%%%%%%%

% Pawel 0000-0002-6941-264X
\author
	[Pawe{\l} Sarkowicz]{{Pawe{\l} Sarkowicz}}
\address
	{Department of Pure Mathematics\\
	University of Waterloo\\
	Waterloo, Ontario \\
	Canada  \ \ \ N2L 3G1}
\email{\href{mailto:psarkowi@uwaterloo.ca}{psarkowi@uwaterloo.ca}}

%%%%%%%%

\author
	[Y.H.~Zhang]{{Yuanhang~Zhang${}^2$}}
\address
	{School of Mathematics\\
	Jilin University\\
	Changchun 130012\\
	P.R. CHINA}
\email{\href{mailto:zhangyuanhang@jlu.edu.cn}{zhangyuanhang@jlu.edu.cn}}

%%%%%%%%%%%%%%%%%%%%%
% Abstract
%%%%%%%%%%%%%%%%%%%%%

\begin{abstract}
  We study the distances between the unitary orbits of matrix amplifications of elements in certain C*-algebras.
  In particular, we show that the distance between unitary orbits of arbitrary elements in unital, separable, UHF-stable C*-algebras  remains unchanged when amplifying to certain matrix sizes.
  We further exhibit examples of elements in C*-algebras where the distance between unitary orbits becomes strictly smaller after amplifying by a certain matrix size, and we demonstrate that  distances between unitary orbits of amplifications are not monotone in the multiplicity of the amplifications, even in the setting of matrix algebras. Lastly, we show that topological $K$-theory provides obstructions in the purely infinite setting.

\end{abstract}

%%%%%%%%%%%%%%%%%%%%%
% Keywords and Subject Classification
%%%%%%%%%%%%%%%%%%%%%

\keywords{Kaplansky's problem; approximate unitary equivalence; unitary orbits;  matrix amplifications; $K$-theory obstructions.}
\subjclass[2010]{Primary: 46L05, 46L80. Secondary: 47C15}

\maketitle
\tableofcontents
\markboth{\textsc{  }}{\textsc{}}

%%%%%%%%%%%%%%%%%%%%%%%%%%%%%%%%%%%%%%%%%%
%%%%%%%%%%%%%%%%%%%%%%%%%%%%%%%%%%%%%%%%%%
% SECTION ONE
%%%%%%%%%%%%%%%%%%%%%%%%%%%%%%%%%%%%%%%%%%
%%%%%%%%%%%%%%%%%%%%%%%%%%%%%%%%%%%%%%%%%%

%%%%%%%%%%%%%%%%%%%%%%%%%%%%%%%%%%%%%%%%%%

 %go back to normal TOC ordering.
  \addtocontents{toc}{\protect\setcounter{tocdepth}{1}}

\section{Introduction}\label{sec:intro}

In Kaplansky's book on infinite abelian groups, three \emph{test problems} were posed for abelian groups \cite[\textsection 6]{Kaplansky54}, the second being of particular interest here. The question is the following: if $G \oplus G$ and $H \oplus H$ are isomorphic as abelian groups, are $G$ and $H$ isomorphic?  (For finite abelian groups and divisible groups, the answer is yes, while J\'{o}nsson~\cite{Jonsson57} and Corner~\cite{Corner64} have provided counterexamples amongst infinite abelian groups.)
By replacing isomorphism of groups by other notions of equivalence of objects in other categories, one obtains natural analogues of this question in various areas of mathematics, including operator theory and operator algebras.
For example, Kadison and Singer showed in \cite{KadisonSinger57} that if $S,T \in \bh$ are bounded linear operators acting on a Hilbert space $\cH$, then
\begin{equation}
\begin{pmatrix}
S & 0\\ 0& S
\end{pmatrix} \text{ and } \begin{pmatrix}
T & 0\\ 0& T
\end{pmatrix}
\end{equation}
are unitarily equivalent in $\bM_2(\bh)$ if and only if $S$ and $T$ are unitarily equivalent in $\bh$.
That is, if the multiplicity-two  amplifications of $S$ and $T$ lie in the same unitary orbit, then the operators themselves share a unitary orbit.
The corresponding question also has a positive answer in general von Neumann algebras by results of Azoff \cite{Azoff95} (see also Sherman \cite[Appendix A.2]{Sherman10}).
It is worth noting that Kaplansky's problem,  with unitary equivalence replaced by similarity, is still open, even for $\bh$. A variant of this problem was  considered in \cite{MRTZ24}, where the authors proved that similarity of so-called \emph{primitive square roots} of  amplifications of compact operators $K$ and $L$  imply similarity of $K$ and $L$ themselves.

Unitary orbits of elements in unital C*-algebras are not closed in general,  indeed, not even in $\bh$\footnote{Take a non-diagonalizable normal operator $N \in \bh$. A corollary of the Weyl-von Neumann-Berg theorem (see \cite[Corollary II.4.5]{DavidsonBook1}) yields that $N$ is approximately unitarily equivalent to a diagonal operator, say $M$. That is, $M$ lies in the closure of the unitary orbit of $N$, but not in the unitary orbit itself.
In fact, when $\cM$ is any factor, it is known precisely when the unitary orbit of a normal operator is norm closed -- see \cite[Theorem 8.13]{Sherman07Unitary}.
One can also construct two positive elements in the CAR algebra which are approximately unitarily equivalent but not unitarily equivalent \cite[Example 2.2]{MarcouxZhang21}}, so it is natural to consider the question of whether
\begin{equation}
\begin{pmatrix}
S & 0\\ 0& S
\end{pmatrix} \text{ and } \begin{pmatrix}
T & 0\\ 0& T
\end{pmatrix}
\end{equation}
being \emph{approximately} unitarily equivalent in $\bM_2(\bh)$ implies that $S$ and $T$ are approximately unitarily equivalent in $\bh$.
This question was answered positively in \cite[Proposition 4.4]{MRTZ24}. The problem remains both interesting and open if one generalises from $\bh$ to arbitrary unital C*-algebras.

\begin{ques} \ \textbf{[Kaplansky's Problem for approximate unitary equivalence]}\label{Kaplansky's problem}

Let $\bA$ be a unital C*-algebra and $a,b \in \bA$. If $a \oplus a$ is approximately unitarily equivalent to $b \oplus b$ in $\bM_2(\bA)$, does it follow that $a$ is approximately unitarily equivalent to $b$?  More generally, if $n \ge 1$ and the $n^{\text{th}}$-matrix amplifications $a^{(n)}$ and $b^{(n)}$ are approximately unitarily equivalent in $\bM_n(\bA)$, are $a$ and $b$ approximately unitarily equivalent in $\bA$?
\end{ques}

When this holds for a fixed $n \in \bN$, we shall say that $a$ \emph{and} $b$ \emph{satisfy} \kpn (\emph{for approximate unitary equivalence in} $\bA$).

\smallskip

One can further formulate a quantitative version of Kaplansky's problem as follows.

\begin{ques} \ \textbf{[The distance problem for unitary orbits]} \label{Kaplansky's problem unitary orbits}

Let $\bA$ be a unital C*-algebra and $n \ge 2$ be an integer.  When is it the case that the distance between the unitary orbits of $a^{(n)}$ and $b^{(n)}$ in $\bM_n(\bA)$ is equal to the distance between the unitary orbits of $a$ and $b$ in $\bA$?
\end{ques}

Observe that when this is indeed the case, we obtain a positive answer to Kaplansky's Question~\ref{Kaplansky's problem}.  The fact that every unitary $u$ element of $\bM_n(\bA)$ can be extended to a unitary element  $\bM_{n+1}(\bA)$ in many ways -- e.g. $u \oplus v$ for any unitary element $v$ of $\bA$ -- suggests that the distance between the unitary orbits of $a^{(n)}$ and $b^{(n)}$ may be monotonically decreasing as a function of $n$.

%One may also ask whether the distance between unitary orbits of amplifications is monotone in the multiplicity of the amplification.

\smallskip

\begin{ques} \ \textbf{[The monotonicity problem for distances between unitary orbits of amplifications]} \label{factor-pair-amplification}

Let $\bA$ be a unital C*-algebra. Is it the case that
\begin{equation}
\dist(U(a^{(k)}),U(b^{(k)})) \leq \dist(U(a^{(l)}),U(b^{(l)}))
\end{equation}
whenever $k \geq l$?
\end{ques}

As we shall see, the answers to these questions depend upon the underlying C*-algebras, the class of operators under consideration (e.g. self-adjoint elements, normal elements, etc.), as well as  the multiplicity of the amplification.

%%%%%%%%%%%%%%%%%%%%%%%%%%%%%%%%%%%%%%%%%%

  %Acknowledgements section
  %\addtocontents{toc}{\protect\setcounter{tocdepth}{0}}
  %\section*{Acknowledgements}
  %Thanks

  %go back to normal TOC ordering.
  %\addtocontents{toc}{\protect\setcounter{tocdepth}{1}}

%%%%%%%%%%%%%%%%%%%%%%%%%%%%%%%%%%%%%%%%%%
%%%%%%%%%%%%%%%%%%%%%%%%%%%%%%%%%%%%%%%%%%
% SECTION TWO
%%%%%%%%%%%%%%%%%%%%%%%%%%%%%%%%%%%%%%%%%%
%%%%%%%%%%%%%%%%%%%%%%%%%%%%%%%%%%%%%%%%%%

\section{Preliminaries and notation}\label{sec:prelims}

%%%%%%%%%%%%%%%%%%%%%%%%%%%%%%%%%%%%%%%%%%

\subsection{Notation}\label{subs:notation}
For a unital C*-algebra $\bA$, $\bA_{sa}$ will denote the set of self-adjoint elements of $\bA$, while $U(\bA)$ will denote the unitary group of $\bA$; that is, the subgroup of invertible elements $u \in \bA$ which satisfy
\begin{equation}
u^*u = uu^* = 1_{\bA}.
\end{equation}
We will write $U^0(\bA)$ for the connected component of the unitary group, i.e., the subgroup of unitaries which are path-connected to the unit of $\bA$.
For an element $a \in \bA$, we shall write $U(a)$ for the \emph{unitary orbit} of $a$, i.e.,
\begin{equation}
U(a) := \{u^*au \mid u \in U(\bA)\}.
\end{equation}

Let $n \in \bN$. For brevity, we will denote by $\bM_n$ the algebra of $n \times n$ complex matrices, while $\bM_n(\bA)$ will denote the matrix amplification of a C*-algebra $\bA$. We will often identify $\bM_n(\bA)$ canonically with the tensor product $\bM_n \otimes \bA$.  That is, given $a \in \bA$, and using the standard basis $\{ e_{ij}\}_{1 \le i, j \le n}$ for $\bM_n$, the element $e_{ij} \otimes a$ corresponds to a matrix in $\bM_n(\bA)$ with $a$ in the $(i,j)$-entry and zeros elsewhere.   For an element $a \in \bA$, we will denote by $a^{(n)}$ the element $1_{\bM_n} \otimes a \in \bM_n \otimes \bA$, or equivalently, the $n$-fold direct sum of $a$ with itself in $\bM_n(\bA)$.
We will use
\[
\dist(U(a),U(b)) := \inf \{ \| x - y\| : x \in U(a), y \in U(b) \} \]
to denote the distance between the unitary orbits of $a$ and $b$. It will be clear from context where the unitaries lie: if $a,b \in \bA$ then $\dist(U(a),U(b))$ will be computed using unitary elements from $\bA$, while $\dist(U(a^{(n)}),U(b^{(n)}))$ will be computed using unitary elements coming from $\bM_n(\bA)$.

We will denote by $\bA \otimes \bB$ the minimal tensor product of the C*-algebras $\bA$ and $\bB$.
Usually, one of the C*-algebras in question will be nuclear, so there will be no ambiguity.
When $\bA = \bM_n$, this reduces to the algebra $\bM_n(\bB)$ mentioned above.

If $\varphi: \bA \to \bB$ is a (usually unital) *-homomorphism, we will denote by $\varphi^{(n)}: \bA \to \bM_n(\bB)$ the *-homomorphism $\varphi^{(n)} =  1_{\bM_n} \otimes \varphi$, which can be identified with the map
\begin{equation}
\varphi^{(n)}(a) = \begin{pmatrix}
\varphi(a) \\ & \ddots \\ & & \varphi(a)
\end{pmatrix} \in \bM_n(\bB).
\end{equation}
We note that if $\tau$ is a tracial state on $\bA$, then $\tau$ can be extended canonically to a tracial state $\tau_n$ on $\bM_n(\bA) = \bM_n \otimes \bA$ by taking $\tau_n((a_{ij})) = \frac{1}{n}\sum_i \tau(a_{ii})$.
This is nothing more than the map $\frac{1}{n}\tr_n \otimes \tau:  \bM_n \otimes \bA \to \bC$, where $\tr_n: \bM_n \to \bC$ is the usual unnormalized trace.

If $\bA$ is a unital C*-algebra, we will write $a \simeq b$ to mean that $a$ and $b$ are \emph{unitarily equivalent} in $\bA$; that is, there is a unitary $u \in U(\bA)$ such that $u^*au = b$.
We will write $a \simeq_a b$ to mean that $a$ and $b$ are \emph{approximately unitarily equivalent}, which is to say that there exists a sequence of unitaries $(u_n)_n$ in $U(\bA)$ such that $u_n^*au_n \to b$ in norm.
This is an equivalence relation on $\bA$, and $a \simeq_a b$  if and only if $\dist(U(a),U(b)) = 0$, or equivalently, if $U(a)$ and $U(b)$ have the same norm-closures.
Lastly, if $\bB$ is another unital C*-algebra and $\varphi,\psi: \bA \to \bB$ are two *-homomorphisms, we say that $\varphi$ and $\psi$ are \emph{approximately unitarily equivalent}, written $\varphi \simeq_a \psi$, if there exists a sequence $(u_n)_n$ of  unitary elements of  $U(\bB)$ such that $u_n^*\varphi(a)u_n \to \psi(a)$ for all $a \in \bA$.

Two projections $p$ and $q$ in a C*-algebra $\bA$ are said to be \emph{Murray-von Neumann equivalent}, in which case we write $p \sim_{\text{MvN}} q$, if there exists a partial isometry $v \in \bA$ such that $p = v^* v$ and $q = v v^*$.  The $K_0$-group of a unital C*-algebra $\bA$ is the Grothendieck group of the semigroup of Murray-von Neumann equivalence classes of projections in matrix algebras over $\bA$, with addition being the direct sum.
When $\tau$ is a tracial state on $\bA$, there is an associated pairing map
\begin{equation}
\tau_*: K_0(\bA) \to \bR
\end{equation}
given by $\tau_*([p] - [q]) = \tr_n \otimes \tau(p - q)$ whenever $p,q \in \bM_n(\bA) =\bM_n \otimes \bA$ are projections. The $K_1$-group $K_1(\bA)$ of a unital C*-algebra $\bA$ is the group of homotopy equivalences classes of unitaries in matrix amplifications, with addition being direct sum.

We shall  denote by $\cZ$ the \emph{Jiang-Su algebra}~\cite{JiangSu99}.  We recall that $\cZ$   is a unital, simple, separable, nuclear C*-algebra satisfying the Universal Coefficient Theorem (UCT) of Rosenberg and Schochet \cite{RosenbergSchochet87}.  It is $KK$-equivalent to $\bC$, and is tensorially self-absorbing. It is often thought of as the infinite-dimensional analogue of the complex numbers from a K-theoretic point of view. From the point of view of classification theory of amenable C*-algebras via $K$-theory and traces, for a given C*-algebra $\bA$, the property of being $\cZ$-stable -- that is, $\bA \otimes \cZ \simeq \bA$ -- is a desirable regularity property and a crucial hypothesis.

%%%%%%%%%%%%%%%%%%%%%%%%%%%%%%%%%%%%%%%%%%
%%%%%%%%%%%%%%%%%%%%%%%%%%%%%%%%%%%%%%%%%%
% SECTION THREE
%%%%%%%%%%%%%%%%%%%%%%%%%%%%%%%%%%%%%%%%%%
%%%%%%%%%%%%%%%%%%%%%%%%%%%%%%%%%%%%%%%%%%

\section{Self-adjoint and normal elements in certain C*-algebras}\label{sec: pos results from lit}

One of the main difficulties which arises in addressing Question~\ref{Kaplansky's problem unitary orbits} is that in general, it is very difficult (and in some cases currently impossible) to precisely compute the distance between unitary orbits of elements of C*-algebras.  One class of elements for which such computations are often possible is the set of self-adjoint elements, though even here, our ability to compute the distances depends greatly upon the underlying C*-algebra.      We shall later demonstrate that the distance between unitary orbits of amplifications of two projections $p$ and $q$ in a C*-algebra $\bA$ can differ from the distance between the unitary orbits of $p$ and $q$ themselves (cf. Proposition~\ref{prop: non n-KP}).
But first let us consider some cases where the distance between unitary orbits of self-adjoint elements remains unchanged under amplification.

%%%%%%%%%%%%%%%%%%%%%%%%%%%%%%%%%%%%%%%%%%

\subsection{Self-adjoint matrices}\label{subs:self-adjoint matrices}

We begin by giving a simple proof of the fact  that the distance between unitary orbits of self-adjoint matrices remains unchanged upon matrix amplification, thereby providing a positive answer to Question~\ref{Kaplansky's problem unitary orbits} in this setting.
For two self-adjoint matrices $S,T \in \bM_k$, we define the \emph{spectral distance} between them to be  the optimal matching distance between their spectra:
\begin{equation}\label{eq:spectral distance}
\text{sprd}(S, T) := \min_{\sigma \in \mathfrak{S}_k} \max_{1 \leq j \leq k} |x_j - y_{\sigma(j)}|,
\end{equation}
where $\mathfrak{S}_k$ is the permutation group on $\{ 1, 2, \ldots, k\}$ and $(x_j)$ and $(y_j)$ are the eigenvalues of $S$ and $T$ counted according to algebraic multiplicity, respectively.
It was proven by Weyl in \cite{Weyl12} that the spectral distance between self-adjoint matrices is equal to the distance between their unitary orbits in $\bM_k$ (see also \cite[Theorem 1.2]{AzoffDavis84}).  Let us provide a  relatively straightforward computation of the spectral distance.

%%%%%%%%%%%%%%%%%%%%%%%%%%%%%%%%%%%%%%%%%%

\begin{lemma}
Let $x_1,x_2,y_1,y_2 \in \bR$ be such that $x_1 \leq x_2$ and $y_1 \leq y_2$. Then
\begin{equation}\label{eq:diamter pairs of points}
\max\{|y_1 - x_1|,|y_2 - x_2|\} \leq \max\{|y_1 - x_2|,|y_2 - x_1|\}.
\end{equation}
\begin{proof}
By interchanging the $x_i$'s and $y_i$'s if necessary, we can assume that $y_1 \leq x_1$.
There are two cases two consider, namely:  when $y_2 \leq x_2$ and when $y_2 > x_2$.

In the former case, $y_1 \leq y_2 \leq x_2$, and so the largest distance between any two points in $\{x_1,x_2,y_1,y_2\}$ is $x_2 - y_1 = |x_2 - y_1|$, so that (\ref{eq:diamter pairs of points}) holds.

For the latter case,
\begin{equation}
\begin{split}
|y_2 - x_1| &= y_2 - x_1 \\
&\geq y_2 - x_2 \\
&= |y_2 - x_2|
\end{split}
\end{equation}
and similarly $|x_2 - y_1| \geq |x_1 - y_1|$. That is, (\ref{eq:diamter pairs of points}) holds.
\end{proof}
\end{lemma}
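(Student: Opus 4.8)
The plan is to prove a slightly stronger statement: that \emph{each} of the two terms $|y_1 - x_1|$ and $|y_2 - x_2|$ appearing on the left-hand side of (\ref{eq:diamter pairs of points}) is on its own bounded above by $\max\{|y_1 - x_2|, |y_2 - x_1|\}$. Once both of these bounds are in hand, taking the maximum over the left-hand terms gives the inequality immediately. The whole point is that the orderings $x_1 \le x_2$ and $y_1 \le y_2$ let me pass from a ``sorted'' difference to a ``crossed'' one by replacing a single index with a larger or smaller partner, which can only enlarge the gap.

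For the term $|y_1 - x_1|$ I would split on the sign of $x_1 - y_1$. If $x_1 \ge y_1$, then $x_2 \ge x_1 \ge y_1$ forces $|x_2 - y_1| = x_2 - y_1 \ge x_1 - y_1 = |x_1 - y_1|$; if instead $x_1 < y_1$, then $y_2 \ge y_1 > x_1$ forces $|y_2 - x_1| = y_2 - x_1 \ge y_1 - x_1 = |y_1 - x_1|$. In either case $|y_1 - x_1|$ is dominated by one of the two crossed distances, hence by their maximum. The term $|y_2 - x_2|$ is treated by the mirror argument, splitting on the sign of $y_2 - x_2$: when $y_2 \ge x_2$ one uses $x_1 \le x_2$ to get $|y_2 - x_1| \ge y_2 - x_2$, and when $y_2 < x_2$ one uses $y_1 \le y_2$ to get $|x_2 - y_1| \ge x_2 - y_2$.

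There is no genuine obstacle here; the only thing requiring a little care is the bookkeeping of the absolute values, namely confirming in each subcase that the chosen crossed expression is actually nonnegative so that the inequality between absolute values is legitimate. A tidier packaging, if one prefers fewer subcases, is to first note that both sides of (\ref{eq:diamter pairs of points}) are unchanged under the simultaneous swap $(x_1, x_2) \leftrightarrow (y_1, y_2)$, so one may assume $y_1 \le x_1$; one then distinguishes $y_2 \le x_2$ from $y_2 > x_2$, observing in the former case that $x_2 - y_1$ is the diameter of the four-point set $\{x_1, x_2, y_1, y_2\}$ and therefore dominates both left-hand terms at once.
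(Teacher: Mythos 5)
Your proof is correct and rests on essentially the same elementary case analysis as the paper's: both exploit the orderings to dominate each ``sorted'' difference by a ``crossed'' one (e.g., $x_2 - y_1 \geq x_1 - y_1$ when $y_1 \leq x_1 \leq x_2$). The only difference is organizational --- you bound each left-hand term individually across four subcases with no symmetry reduction, whereas the paper first assumes $y_1 \leq x_1$ by the swap symmetry and then treats two cases, one via the diameter observation you yourself mention as the ``tidier packaging'' --- so the underlying content is identical.
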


%%%%%%%%%%%%%%%%%%%%%%%%%%%%%%%%%%%%%%%%%%

\begin{prop}
Let $x_1 \leq x_2 \leq \cdots \leq x_k$ and $y_1 \leq y_2 \leq \cdots \leq y_k$ be elements of $\bR$. If $\sigma \in \mathfrak{S}_k$, then
\begin{equation}
\max_{1 \leq j \leq k}|x_j - y_j| \leq \max_{1 \leq j \leq k} |x_j - y_{\sigma(j)}|.
\end{equation}
\begin{proof}
Let $z_j = y_{\sigma(j)}$ and suppose that there exist $1 \leq i_0 < j_0 \leq n$ such that $z_{i_0} > z_{j_0}$.

Let $\rho \in \mathfrak{S}_k$ be the permutation $(i_0,j_0)$. Note that if $m \notin \{i_0,j_0\}$, then
\begin{equation}
|x_m - z_m| = |x_m - z_{\rho(m)}|.
\end{equation}
By the above lemma, we have that
\begin{equation}
\max_{m \in \{i_0,j_0\}} |x_m - z_{\rho(m)}| \leq \max_{m \in \{i_0,j_0\}} |x_m - z_m|.
\end{equation}
In other words, by ordering $z_{i_0},z_{j_0}$ in increasing order, our pairing distance improves.
We can therefore apply a finite number of transpositions in order to eventually rewrite $y_1,\dots,y_k$ in increasing order, and at the last stage, the pairing distance can no longer  decrease. That is, the result holds.
\end{proof}
\end{prop}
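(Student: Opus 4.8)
The plan is to show that the identity permutation minimizes the max-pairing distance by a bubble-sort argument: starting from an arbitrary permutation $\sigma$, I repeatedly apply transpositions that reduce inversions in the sequence $z_j = y_{\sigma(j)}$, and I verify that each such transposition does not increase the max-pairing distance. Since the fully sorted sequence is exactly $(y_1,\dots,y_k)$ (because the $y_j$ are already in increasing order), reaching it after finitely many steps yields $\max_j |x_j - y_j| \le \max_j |x_j - y_{\sigma(j)}|$.

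First I would set $z_j = y_{\sigma(j)}$ and observe that if $\sigma$ is not the identity, then $(z_j)$ fails to be nondecreasing, so there exist indices $i_0 < j_0$ with $z_{i_0} > z_{j_0}$; I may as well take an \emph{adjacent} inversion, i.e. $j_0 = i_0 + 1$, so that swapping them is a single bubble-sort step. Let $\rho = (i_0, j_0)$ be the transposition and consider the new sequence $(z_{\rho(j)})$. For every index $m \notin \{i_0, j_0\}$ the pairing is untouched, so $|x_m - z_{\rho(m)}| = |x_m - z_m|$. The only contributions that change are those at $i_0$ and $j_0$, and here is precisely where the preceding lemma applies: with $x_{i_0} \le x_{j_0}$ (from the given ordering of the $x$'s) and $z_{j_0} \le z_{i_0}$, the lemma — applied with the roles of the smaller/larger points assigned appropriately — gives
\begin{equation}
\max\{|x_{i_0} - z_{j_0}|, |x_{j_0} - z_{i_0}|\} \le \max\{|x_{i_0} - z_{i_0}|, |x_{j_0} - z_{j_0}|\},
\end{equation}
that is, sorting this adjacent pair into increasing order does not increase its local max distance. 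Combining the unchanged indices with this inequality yields $\max_j |x_j - z_{\rho(j)}| \le \max_j |x_j - z_j|$.

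Next I would iterate: each adjacent transposition that corrects an inversion strictly decreases the number of inversions of the permutation (a nonnegative integer) while never increasing the max-pairing distance. Hence after finitely many steps the sequence becomes fully sorted, which, since the $y_j$ are already nondecreasing, is identically $(y_1, \dots, y_k)$; the accumulated inequalities then chain together to give $\max_{1 \le j \le k} |x_j - y_j| \le \max_{1 \le j \le k} |x_j - y_{\sigma(j)}|$, as desired.

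The main subtlety — and the one place to be careful — is the bookkeeping in the inductive step: one must confirm that after a swap the reduced configuration still has its $x$-coordinates in increasing order (they are never permuted, so this is automatic) and that the lemma is invoked with hypotheses matching its statement, namely two pairs of reals each given in sorted order. The lemma is stated for the ordered quantities $x_1 \le x_2$, $y_1 \le y_2$, so at each step I should relabel the local pair $\{x_{i_0}, x_{j_0}\}$ and $\{z_{i_0}, z_{j_0}\}$ into increasing order before applying it; since $x_{i_0} \le x_{j_0}$ already and $\{z_{i_0}, z_{j_0}\}$ sorted is $\{z_{j_0}, z_{i_0}\}$, the lemma's right-hand side corresponds to the ``crossed'' pairing $\max\{|x_{i_0} - z_{i_0}|, |x_{j_0} - z_{j_0}|\}$ and its left-hand side to the sorted pairing. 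Everything else is routine termination of bubble sort.
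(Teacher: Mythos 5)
Your proof is correct and follows essentially the same route as the paper's: identify an inversion in $z_j = y_{\sigma(j)}$, apply the lemma to the transposition fixing it, and iterate until the sequence is sorted. Your restriction to \emph{adjacent} inversions with the inversion count as a termination measure is a slightly more careful bookkeeping of the same argument (the paper swaps arbitrary inversions and asserts termination more informally), but it is not a different method.
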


%%%%%%%%%%%%%%%%%%%%%%%%%%%%%%%%%%%%%%%%%%

\begin{cor}
Let $n \in \bN$ and $S,T \in \bM_k$ be self-adjoint matrices with
\begin{equation}
\begin{split}
\sigma(S) &= \{x_1,\dots,x_k\} \\
\sigma(T) &= \{y_1,\dots,y_k\}.
\end{split}
\end{equation}
Then
\begin{equation}
\text{sprd}(S^{(n)},T^{(n)}) = \text{sprd}(S,T).
\end{equation}
\begin{proof}
By the above proposition, we have that
\begin{equation}
\text{sprd}(S,T) = \max_{1 \leq j \leq k}|x_j - y_j|,
\end{equation}
where $x_1 \leq x_2 \leq \cdots \leq x_k$ and $y_1 \leq y_2 \leq \cdots \leq y_k$ are written in increasing order.
Writing $\sigma(S^{(n)}), \sigma(T^{(n)})$ in increasing order obviously yields the same result.
\end{proof}
\end{cor}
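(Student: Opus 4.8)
The plan is to reduce the spectral distance to an explicit formula by means of the preceding Proposition, and then to observe that amplification merely thickens the sorted eigenvalue lists in a way that leaves this formula unchanged.

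First I would invoke the Proposition to record that, for any pair of self-adjoint matrices, the optimal matching appearing in the definition~\eqref{eq:spectral distance} of $\text{sprd}$ is realised by the monotone (sorted) pairing. Concretely, writing the eigenvalues of $S$ and $T$ in increasing order as $x_1 \le \cdots \le x_k$ and $y_1 \le \cdots \le y_k$, the Proposition yields $\max_j |x_j - y_{\sigma(j)}| \ge \max_j |x_j - y_j|$ for every $\sigma \in \mathfrak{S}_k$, whence
\begin{equation}
\text{sprd}(S,T) = \max_{1 \le j \le k} |x_j - y_j|.
\end{equation}
This identifies the spectral distance with the maximal discrepancy of the two sorted spectra.

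Next I would compute the sorted spectra of the amplifications. Since $S^{(n)} = 1_{\bM_n} \otimes S$, its eigenvalues counted with algebraic multiplicity are precisely those of $S$, each occurring $n$ times; the same holds for $T^{(n)}$. Sorting the resulting list of length $nk$ therefore produces consecutive blocks: the value $x_j$ occupies positions $(j-1)n + 1$ through $jn$, and $y_j$ occupies those same positions in the sorted spectrum of $T^{(n)}$. Applying the Proposition once more, now to the self-adjoint matrices $S^{(n)}, T^{(n)} \in \bM_{nk}$, gives that $\text{sprd}(S^{(n)}, T^{(n)})$ equals the maximum, taken over these $nk$ positions, of the difference between the two sorted spectra.

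Finally, because the blocks are aligned, the position indexed by $m = (j-1)n + r$ (with $1 \le r \le n$) contributes exactly $|x_j - y_j|$, independently of $r$. Hence the maximum over all $nk$ positions coincides with the maximum over the $k$ values $|x_1 - y_1|, \dots, |x_k - y_k|$, which is $\text{sprd}(S,T)$ by the first step. The only point that genuinely requires care — and the closest thing to an obstacle — is the verification that sorting the amplified spectrum preserves this block structure, so that the monotone pairing of $S^{(n)}$ with $T^{(n)}$ restricts to the monotone pairing of $S$ with $T$. This is immediate once one notes that the original eigenvalues are already in increasing order, so stacking $n$ copies of each value and re-sorting can never interleave two distinct blocks; everything else is a direct application of the formula established in the first step.
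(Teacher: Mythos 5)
Your proof is correct and follows essentially the same route as the paper: both use the preceding Proposition to reduce $\text{sprd}$ to the sorted-pairing maximum and then note that sorting the amplified spectra preserves the block structure, leaving that maximum unchanged. Your write-up simply makes explicit the alignment of blocks that the paper dismisses with the word ``obviously.''
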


%%%%%%%%%%%%%%%%%%%%%%%%%%%%%%%%%%%%%%%%%%

As mentioned above, by \cite[Theorem 1.2]{AzoffDavis84}, the spectral distance between two self-adjoint matrices is equal to the distance between their unitary orbits. We note that Holbrook produced in \cite{Holbrook92} an example of two $3\times3$ normal matrices whose spectral distance is strictly greater than the distance between their unitary orbits.  %One can also see \cite[Section 1.2]{DavidsonSzarek01} for exposition about unitary orbits of elements in matrix algebras.

This fact about self-adjoint matrices allows us to conclude the following.

%%%%%%%%%%%%%%%%%%%%%%%%%%%%%%%%%%%%%%%%%%

\begin{cor}
Let $n \in \bN$ and $S,T \in \bM_k$ be self-adjoint matrices. Then
\begin{equation}
\dist(U(S),U(T)) = \dist(U(S^{(n)}),U(T^{(n)})).
\end{equation}
\end{cor}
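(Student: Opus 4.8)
The plan is to combine the two facts that the excerpt has just assembled. The statement to be proven is
\[
\dist(U(S),U(T)) = \dist(U(S^{(n)}),U(T^{(n)}))
\]
for self-adjoint $S,T \in \bM_k$. I would invoke the cited result of Weyl, via \cite[Theorem 1.2]{AzoffDavis84}, which identifies the distance between unitary orbits of self-adjoint matrices with their spectral distance. Applying this twice yields
\[
\dist(U(S),U(T)) = \text{sprd}(S,T)
\qquad\text{and}\qquad
\dist(U(S^{(n)}),U(T^{(n)})) = \text{sprd}(S^{(n)},T^{(n)}),
\]
where the second application is legitimate precisely because $S^{(n)}$ and $T^{(n)}$ are themselves self-adjoint matrices in $\bM_{nk}$.

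With both distances rewritten as spectral distances, the entire claim reduces to the equality $\text{sprd}(S^{(n)},T^{(n)}) = \text{sprd}(S,T)$, which is exactly the content of the preceding Corollary. So the proof is essentially a two-line chaining of results:
\begin{equation}
\dist(U(S^{(n)}),U(T^{(n)})) = \text{sprd}(S^{(n)},T^{(n)}) = \text{sprd}(S,T) = \dist(U(S),U(T)),
\end{equation}
where the outer equalities are the Azoff--Davis/Weyl identification and the middle equality is the Corollary above.

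The only point that warrants a moment's care — and what I would flag as the main (if modest) obstacle — is making sure the multiplicities line up correctly when passing to $S^{(n)}$. The spectrum of $S^{(n)}$ is the spectrum of $S$ with every eigenvalue's algebraic multiplicity multiplied by $n$, since $S^{(n)} = 1_{\bM_n} \otimes S$ is an $n$-fold direct sum. I would want the reader to see that writing the eigenvalues of $S^{(n)}$ and $T^{(n)}$ in increasing order simply repeats each of the sorted lists $x_1 \le \cdots \le x_k$ and $y_1 \le \cdots \le y_k$ in $n$ consecutive blocks, so the optimal matching distance $\max_j |x_j - y_j|$ is unchanged — this is the observation the Corollary already records. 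Beyond checking this bookkeeping, there is nothing else to prove: the substantive combinatorial work (that the sorted pairing is optimal) was carried out in the Proposition, and the reduction to spectral distance is supplied by the literature.
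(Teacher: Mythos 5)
Your proof is correct and is exactly the argument the paper intends: the paper states this corollary as an immediate consequence of the Azoff--Davis/Weyl identification of $\dist(U(\cdot),U(\cdot))$ with $\text{sprd}(\cdot,\cdot)$ for self-adjoint matrices, combined with the preceding corollary that $\text{sprd}(S^{(n)},T^{(n)})=\text{sprd}(S,T)$. Your additional remark about the multiplicities lining up is precisely the bookkeeping already recorded in that preceding corollary, so nothing is missing.
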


%%%%%%%%%%%%%%%%%%%%%%%%%%%%%%%%%%%%%%%%%%

We note this is not true for arbitrary pairs of matrices.  Indeed, \cite[Theorem 5.1]{MarcouxZhang21}   proves the existence of $n,k \in \bN$ and $a,b \in \bM_k$ such that
\begin{equation}
\dist(U(a^{(n)}),U(b^{(n)})) < \dist(U(a),U(b)).
\end{equation}
(We note, however, that  Theorem~5.3 of the same paper shows that the conclusion of the above Corollary holds for all $k \ge 1$ when considering pairs $S, T$ of \emph{normal} matrices in $\bM_2$.)
The question of finding explicit matrices for which the strict inequality holds remains.
We can, however, provide a sharper idea of the integers that one can work with. In fact, one only needs to consider matrices of size $2^n \times 2^n$ (although the precise value of $n$ required is still unknown), and to look at the 3-fold amplification (see Proposition \ref{prop: existence of non-KP matrices}).

%%%%%%%%%%%%%%%%%%%%%%%%%%%%%%%%%%%%%%%%%%
%%%%%%%%%%%%%%%%%%%%%%%%%%%%%%%%%%%%%%%%%%
%%%%%%%%%%%%%%%%%%%%%%%%%%%%%%%%%%%%%%%%%%

\subsection{Self-adjoint elements in real rank zero C*-algebras with (FCQ1)}\label{subs:self-adjoint in real rank zero}

Recall that a C*-algebra $\bA$ has \emph{real rank zero} if the set of self-adjoint elements with finite spectrum  is dense in the set of all self-adjoint elements of $\bA$.
Heuristically, this means that the C*-algebra (and its hereditary subalgebras) have a plethora of projections.  A unital C*-algebra with faithul trace $\tau$ satisfies Blackadar's \emph{Fundamental Comparability Question 1} (FCQ1)~\cite{Blackadar88} with respect to $\tau$ if, whenever $p,q \in \bA$ are projections such that $\tau(p) \leq \tau(q)$, we have $p \lesssim q$ (i.e., $p$ is Murray-von Neumann equivalent to a subprojection of $q$).  This has also been referred to as \emph{strong comparison of projections} in \cite{Skoufranis16}.
There are many C*-algebras which satisfy (FCQ1), as pointed out in \cite[Example 3.8]{Skoufranis16}.

In that paper, Skoufranis computes a formula for the distance between the unitary orbits of any two self-adjoint elements in a C*-algebra of real rank zero.   His formula allows us to show that the distance is invariant under matrix amplification.

If $\bA$ is a unital C*-algebra equipped with a tracial state $\tau$, we  define the \emph{dimension function} associated to $\tau$ by
\begin{equation}
d_\tau(a) = \lim_{\ee \to 0^+} \tau(f_\ee(|a|))
\end{equation}
where $f: \bR \to \bR$ is the piece-wise linear function such that
\begin{equation}
f_\ee(t) = \begin{cases}
0 & t \leq \frac{\ee}{2} \\
\frac{2}{\ee} (t - \frac{\ee}{2}) & t \in [\frac{\ee}{2},\ee] \\
1 & t \geq \ee.
\end{cases}
\end{equation}
The dimension function $d_\tau$ just gives the trace of the support projection of $a$ in the double dual of $\bA$, cf. \cite[Remark 3.5]{OrtegaRordamThiel11}. For example, for a matrix $a \in \bM_n$, we have that $d_\tau(a) = \frac{\rank(a)}{n}$, where $\tau: \bM_n \to \bC$ is the unique tracial state.

For a self-adjoint element $b$ in a C*-algebra $\bA$, we shall write  $b_+$ to denote the positive part of $b$ obtained via functional calculus.

%%%%%%%%%%%%%%%%%%%%%%%%%%%%%%%%%%%%%%%%%%

\begin{defn}
Let $\bA$ be a unital C*-algebra with faithful tracial state $\tau$.
We define, for $a \in \bA_{sa}$, the \emph{eigenvalue function of $a$ associated to $\tau$}, which we denote by $\lambda_a^\tau$, to be the function $[0,1) \to \bR$ defined by
\begin{equation}
\lambda_a^\tau(s) := \inf \{t \in \bR \mid d_\tau((a - t1_\bA)_+) \leq s\}.
\end{equation}
\end{defn}

%%%%%%%%%%%%%%%%%%%%%%%%%%%%%%%%%%%%%%%%%%

The eigenvalue function is a well-defined and well-understood function \cite[Theorem 2.10]{Skoufranis16}.
In the case of self-adjoint matrices, if one orders the eigenvalues of $a \in \bM_k$ as $\lambda_1 \leq \lambda_2 \leq \cdots \leq \lambda_k$, then the eigenvalue function is the step-function given by
\begin{equation}
\lambda_a^\tau(s) = \lambda_{k-j+1} \text{ whenever } s \in \left[\frac{j-1}{k},\frac{j}{k}\right).
\end{equation}

%%%%%%%%%%%%%%%%%%%%%%%%%%%%%%%%%%%%%%%%%%

The following is \cite[Theorem 5.1]{Skoufranis16}, and can also be found in a different form in \cite{SunderThomsen92}.

%%%%%%%%%%%%%%%%%%%%%%%%%%%%%%%%%%%%%%%%%%

\begin{theorem}\label{thm: Skoufranis distance in terms of eigenfunctions}
Let $\bA$ be a unital C*-algebra with real rank zero which satisfies FCQ1 with respect to a faithful tracial state $\tau$. If $a,b \in \bA_{sa}$, then
\begin{equation}
\dist(U(a),U(b)) = \sup_{s \in [0,1)} |\lambda_a^\tau(s) - \lambda_b^\tau(s)|.
\end{equation}
\end{theorem}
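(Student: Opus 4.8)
The plan is to prove the two inequalities $\dist(U(a),U(b)) \ge D$ and $\dist(U(a),U(b)) \le D$ separately, where $D := \sup_{s \in [0,1)}|\lambda_a^\tau(s) - \lambda_b^\tau(s)|$. The engine for both directions is a short list of elementary properties of the eigenvalue function, which are among its standard features (cf. the cited \cite[Theorem 2.10]{Skoufranis16} and the Fack--Kosaki theory of singular value functions): (i) \emph{unitary invariance}, $\lambda_{u^*au}^\tau = \lambda_a^\tau$, since $(u^*au - t1)_+ = u^*(a-t1)_+u$ and $\tau$ is tracial, so that $D$ is an invariant of the pair of unitary orbits; (ii) \emph{translation covariance}, $\lambda_{a+t1}^\tau = \lambda_a^\tau + t$, immediate from the substitution $r \mapsto r-t$ in the defining infimum; and (iii) \emph{order monotonicity}, $x \le y \Rightarrow \lambda_x^\tau \le \lambda_y^\tau$ pointwise. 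Property (iii) holds not because $(\cdot - t1)_+$ is operator monotone (it is not), but because $t \mapsto d_\tau((\cdot-t1)_+)$ is monotone under the operator order, the spectral projections above $t$ satisfying $\chi_{(t,\infty)}(x) \precsim \chi_{(t,\infty)}(y)$ in the Cuntz sense when $x \le y$, and $d_\tau$ respecting Cuntz subequivalence.

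First I would establish the lower bound. Fix unitaries $u,v$ and set $\ee = \|u^*au - v^*bv\|$, so that $v^*bv - \ee 1 \le u^*au \le v^*bv + \ee 1$. Applying (iii) and then (ii) gives $\lambda_{v^*bv}^\tau - \ee \le \lambda_{u^*au}^\tau \le \lambda_{v^*bv}^\tau + \ee$ pointwise, which by (i) reads $|\lambda_a^\tau - \lambda_b^\tau| \le \ee$ everywhere; hence $D \le \|u^*au - v^*bv\|$. Taking the infimum over $u,v$ yields $\dist(U(a),U(b)) \ge D$. The same computation with $b = a'$ records the Lipschitz estimate $\sup_s|\lambda_a^\tau - \lambda_{a'}^\tau| \le \|a - a'\|$, which I will reuse below.

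The upper bound is where real rank zero and FCQ1 enter, and it is the heart of the argument. Given $\ee > 0$, real rank zero lets me replace $a,b$ by self-adjoint elements $a',b'$ of finite spectrum with $\|a-a'\|,\|b-b'\| < \ee$; by the Lipschitz estimate $D$ changes by at most $2\ee$, so it suffices to produce a single unitary $u$ with $\|ua'u^* - b'\| \le D' := \sup_s|\lambda_{a'}^\tau - \lambda_{b'}^\tau|$. Writing $a' = \sum_i c_i p_i$ and $b' = \sum_j d_j q_j$ in terms of orthogonal spectral projections, both eigenvalue functions are step functions whose jumps occur at the cumulative traces of the $p_i$ and $q_j$. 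Merging these break-points into a common partition $0 = t_0 < \cdots < t_m = 1$, I would refine each spectral decomposition into orthogonal families $\{P_k\}$, $\{Q_k\}$ summing to $1$, with $a'P_k = \gamma_k P_k$, $b'Q_k = \delta_k Q_k$, and $\tau(P_k) = \tau(Q_k) = t_k - t_{k-1}$, where $\gamma_k,\delta_k$ are the constant values of $\lambda_{a'}^\tau, \lambda_{b'}^\tau$ on $[t_{k-1},t_k)$. Granting such families, FCQ1 converts the trace equalities $\tau(P_k) = \tau(Q_k)$ into $P_k \MvN Q_k$; choosing partial isometries $v_k$ with $v_k^*v_k = P_k$ and $v_kv_k^* = Q_k$ and setting $u = \sum_k v_k$ gives a unitary with $uP_ku^* = Q_k$, whence $ua'u^* = \sum_k \gamma_k Q_k$ and $\|ua'u^* - b'\| = \max_k|\gamma_k - \delta_k| \le D'$, as required. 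Letting $\ee \to 0$ then gives $\dist(U(a),U(b)) \le D$.

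The main obstacle is precisely the construction of the trace-matched families $\{P_k\},\{Q_k\}$: one must cut the spectral projections of $a'$ (resp.\ $b'$) into subprojections of prescribed traces $t_k - t_{k-1}$. This is where both hypotheses are genuinely used: real rank zero supplies an abundance of subprojections inside the relevant hereditary subalgebras, while FCQ1 is what realizes a target trace value by an actual subprojection. The clean point is that each required cumulative trace is the $\tau$-value of a genuine spectral projection of $a'$ or $b'$, so strong comparison produces a subprojection of that trace, and iterating the cut carves out the full family. Care is needed when the trace-blocks of $a'$ and $b'$ are incommensurate, where the cutting must be performed successively and the bookkeeping of the remaining traces verified at each stage; this successive-cutting argument is the technical crux and is exactly the content one draws from the cited results on the eigenvalue function. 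Once the families are in hand, the norm estimate above closes the argument.
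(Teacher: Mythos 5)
The first thing to note is that the paper contains no proof of this statement: it is quoted verbatim as \cite[Theorem 5.1]{Skoufranis16} (with a variant attributed to \cite{SunderThomsen92}), so your proposal can only be compared against the cited source rather than against anything in the paper itself. That said, your argument is correct and follows essentially the route of that source: the lower bound via the Weyl-type Lipschitz estimate $\sup_{s}|\lambda_a^\tau(s)-\lambda_b^\tau(s)|\le \|a-b\|$, which you correctly derive from unitary invariance, translation covariance and order monotonicity of eigenvalue functions (standard properties, cf.\ \cite[Theorem 2.10]{Skoufranis16}), and the upper bound by real-rank-zero reduction to finite-spectrum elements followed by an FCQ1-driven matching of refined spectral projections.

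Two details in the upper bound deserve tightening, though neither is a genuine gap. First, FCQ1 by itself converts $\tau(P_k)=\tau(Q_k)$ only into mutual subequivalence $P_k\precsim Q_k\precsim P_k$; to conclude $P_k \MvN Q_k$ you must invoke the faithfulness of $\tau$: if $P_k\sim P'\le Q_k$, then $\tau(Q_k-P')=0$ forces $P'=Q_k$. Second, your claim that each prescribed trace is ``the $\tau$-value of a genuine spectral projection of $a'$ or $b'$'' is not literally true: $t_k-t_{k-1}$ is in general a difference of traces of cumulative spectral projections of $a'$ and of $b'$. The successive-cutting induction you allude to repairs this and in fact sidesteps the first issue as well. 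Writing $\alpha_i$, $\beta_j$ for the cumulative traces of the spectral projections $p_i$ of $a'$ and $q_j$ of $b'$, and $i(k)$, $j(k)$ for the indices with $[t_{k-1},t_k)\subseteq[\alpha_{i(k)-1},\alpha_{i(k)})\cap[\beta_{j(k)-1},\beta_{j(k)})$, the remainders $P_k'=p_{i(k)}-\sum_{l<k,\ i(l)=i(k)}P_l$ and $Q_k'=q_{j(k)}-\sum_{l<k,\ j(l)=j(k)}Q_l$ have traces $\alpha_{i(k)}-t_{k-1}$ and $\beta_{j(k)}-t_{k-1}$; since $t_k=\min(\alpha_{i(k)},\beta_{j(k)})$, the smaller remainder has trace exactly $t_k-t_{k-1}$, so one takes it as the next block on its side and uses FCQ1 to embed it into the other remainder, which yields Murray-von Neumann equivalent blocks directly. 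With that induction written out, your proof is complete.
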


%%%%%%%%%%%%%%%%%%%%%%%%%%%%%%%%%%%%%%%%%%

We are now ready to apply these results to the amplification problem in certain unital simple C*-algebras of real rank zero.  We remind the reader that a unital C*-algebra $\bA$ is said to be \emph{finite} if it contains no infinite projections (i.e. projections Murray-von Neumann equivalent to a proper subprojection of themselves), and $\bA$ is said to be  \emph{stably finite} if $\bM_n(\bA)$ is finite for all $n \ge 1$~\cite{Rordam2004}.  We also note that if $\bA$ is simple, then any tracial state on $\bA$ is automatically faithful.

\begin{prop}\label{prop: KPn for self-adjoints in nice C*-algebras}
Let $\bA$ be a unital, simple, stably finite C*-algebra of real rank zero with tracial state $\tau$.   Let $\tr_n$ denote the standard unnormalised trace on $\bM_n$, and suppose that  $\bM_n(\bA)$ satisfies Blackadar's FCQ1 with respect to $\tau_n := \frac{1}{n}\tr_n \otimes \tau$ for all $n$. Then
\begin{equation}\label{eq: dist for self-adjoints}
\dist(U(a),U(b)) = \dist(U(a^{(n)}),U(b^{(n)}))
\end{equation}
for all $a,b \in \bA_{sa}$ and $n \in \bN$.

\begin{proof}
Fix $n \in \bN$.  Then $\bM_n(\bA)$ is a unital, simple, stably finite C*-algebra of real rank zero, and it has faithful trace $\tau_n$ as defined above.
We have by Theorem \ref{thm: Skoufranis distance in terms of eigenfunctions} that
\begin{equation}\label{eq: eigenvalue-1}
\dist(U(a),U(b)) = \sup_{s \in [0,1)} |\lambda_a^\tau(s) - \lambda_b^\tau(s)|
\end{equation}
and
\begin{equation}\label{eq:eigenvalue-n}
\dist(U(a^{(n)}),U(b^{(n)})) = \sup_{s \in [0,1)} |\lambda_{a^{(n)}}^{\tau_n}(s) - \lambda_{b^{(n)}}^{\tau_n}(s)|.
\end{equation}
To see the quantities on the right of (\ref{eq: eigenvalue-1}) and (\ref{eq:eigenvalue-n}) are equal, notice that
\begin{equation}
(a^{(n)} - t1_{\bM_n(\bA)})_+ = (a - t1_\bA)_+^{(n)}, \ t \in \bR,
\end{equation}
and that
\begin{equation}
d_{\tau_n}((a - t1_\bA)_+^{(n)}) = d_\tau((a - t1_\bA)_+)
\end{equation}
which yields that $\lambda_{a^{(n)}}^{\tau_n}(s) = \lambda_a^\tau(s)$ for all $s \in [0,1)$, and similarly for $b$.
\end{proof}
\end{prop}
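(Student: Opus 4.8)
The plan is to reduce both distances to the eigenvalue-function formula of Theorem~\ref{thm: Skoufranis distance in terms of eigenfunctions}, applied once in $\bA$ and once in $\bM_n(\bA)$, and then to show that passing to the $n$-fold amplification leaves the relevant eigenvalue functions unchanged. Fixing $n$, I would first verify that $\bM_n(\bA)$ satisfies the hypotheses of that theorem: it is unital and simple (since $\bA$ is), it has real rank zero (a property preserved under matrix amplification), it is stably finite because $\bM_m(\bM_n(\bA)) = \bM_{mn}(\bA)$, and it carries the tracial state $\tau_n$, which is faithful automatically by simplicity. As FCQ1 for $\tau_n$ is assumed outright, the theorem applies and yields
\[
\dist(U(a),U(b)) = \sup_{s \in [0,1)} |\lambda_a^\tau(s) - \lambda_b^\tau(s)|, \qquad \dist(U(a^{(n)}),U(b^{(n)})) = \sup_{s \in [0,1)} |\lambda_{a^{(n)}}^{\tau_n}(s) - \lambda_{b^{(n)}}^{\tau_n}(s)|.
\]

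The crux is then to prove that $\lambda_{a^{(n)}}^{\tau_n}(s) = \lambda_a^\tau(s)$ for all $s \in [0,1)$, and likewise for $b$, after which taking suprema finishes the argument. I would unwind the definition of the eigenvalue function and isolate two observations. First, because $a^{(n)} = 1_{\bM_n} \otimes a$, spectral shifting and taking positive parts commute with amplification, so $(a^{(n)} - t1)_+ = (a - t1)_+^{(n)}$ for every $t \in \bR$. Second, for any self-adjoint $c \in \bA$ the dimension function is insensitive to amplification: since $|c^{(n)}| = 1_{\bM_n} \otimes |c|$ and functional calculus respects the block-diagonal structure, one has $f_\ee(|c^{(n)}|) = 1_{\bM_n} \otimes f_\ee(|c|)$, whence
\[
\tau_n(f_\ee(|c^{(n)}|)) = \tfrac{1}{n}\tr_n(1_{\bM_n})\,\tau(f_\ee(|c|)) = \tau(f_\ee(|c|)),
\]
and letting $\ee \to 0^+$ gives $d_{\tau_n}(c^{(n)}) = d_\tau(c)$.

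Combining the two observations with $c = (a - t1)_+$ shows that the defining sets $\{t \in \bR : d_{\tau_n}((a^{(n)} - t1)_+) \leq s\}$ and $\{t \in \bR : d_\tau((a - t1)_+) \leq s\}$ coincide, so their infima agree and the eigenvalue functions match. I expect the only genuine subtlety to lie in confirming that $\bM_n(\bA)$ really inherits every hypothesis of the theorem — in particular that real rank zero passes to matrix amplifications and that $\tau_n$ is faithful — rather than in the eigenvalue-function computation, which is essentially formal once the two commutation identities are in hand.
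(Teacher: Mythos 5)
Your proposal is correct and follows essentially the same route as the paper's proof: apply Skoufranis's formula (Theorem~\ref{thm: Skoufranis distance in terms of eigenfunctions}) in both $\bA$ and $\bM_n(\bA)$, then show the eigenvalue functions are unchanged under amplification via the identities $(a^{(n)} - t1)_+ = (a - t1)_+^{(n)}$ and $d_{\tau_n}\bigl((a-t1)_+^{(n)}\bigr) = d_\tau\bigl((a-t1)_+\bigr)$. Your verification that $\bM_n(\bA)$ inherits the hypotheses and your explicit computation $\tau_n(f_\ee(|c^{(n)}|)) = \tau(f_\ee(|c|))$ supply slightly more detail than the paper, but the argument is the same.
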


%%%%%%%%%%%%%%%%%%%%%%%%%%%%%%%%%%%%%%%%%%

It is well-known that both (matrix amplifications of) UHF algebras as well as $\II_1$ factors satisfy FCQ1 with respect to their unique trace.
We will see that the collection of algebras satisfying (FCQ1) contains many C*-algebras that arise in the classification programme.

There is a second comparison property for projections, known alternately as the FCQ2 property or  \emph{strict comparison of projections} in the literature.  To be precise, a simple, unital C*-algebra $\bA$  is said to \emph{satisfy} FCQ2 if, for all projections $p, q \in \bA$, $\tau(p) < \tau(q)$ for all tracial states $\tau$ implies that $p \prec q$ (or equivalently $p \preceq q$); that is, $p$ is Murray-von Neumann equivalent to a  subprojection of $q$.
This is a fundamental regularity property that is exhibited by many C*-algebras, for example by those satisfying the hypotheses of the classification theorem (see for example \cite{Rordam04}), as well as those coming from free product C*-algebras \cite{DykemaRordam98,DykemaRordam00}. More generally, many of these C*-algebras satisfy \emph{strict comparison of positive elements}, where one considers Cuntz equivalence of positive elements \cite{Cuntz77structmult,Cuntz78}, which plays a role in the classification programme at large and is related to the Toms-Winter conjecture. Recently, several exact C*-algebras coming from free products and groups were shown to satisfy strict comparison of positive elements \cite{AGEKP24,HayesKunnElaRobert25}

Despite FCQ1 being common among real rank zero C*-algebras, it is far less common than strict comparison of projections; in fact, there are plenty of simple AF algebras without this property. The point is that $\tau(p) = \tau(q)$ need not imply that $p \MvN q$ in general. Indeed, there are even unital, simple AF algebras with unique trace where this does not hold -- see \cite[Section 1.5]{RordamBook}, and \cite[Section 7.6]{BlackadarKBook}, for example. We note that these AF algebras satisfy all the assumptions of Lemma~\ref{lem3.08} below -- i.e., they are unital, simple, separable, $\cZ$-stable C*-algebras which are of real rank zero -- except that the pairing map from $K_0(\bA)$ to $\bR$ will not be injective in general.

%%%%%%%%%%%%%%%%%%%%%%%%%%%%%%%%%%%%%%%%%%

Recall that a unital C*-algebra $\bA$ has \emph{stable rank one} if the invertible elements are dense in $\bA$. It is well-known that stable rank one C*-algebras are stably finite, cf. \cite[Proposition 3.3.4]{LinBook}.
We first require a lemma.

%%%%%%%%%%%%%%%%%%%%%%%%%%%%%%%%%%%%%%%%%%

\begin{lemma} \label{lem3.08}
Let $\bA$ be a unital, simple C*-algebra of stable rank one with tracial state $\tau$. Suppose that the pairing map $\tau_*: K_0(\bA) \to \bR$ is injective and that $\bA$ satisfies strict comparison of projections. Then $\bA$ satisfies FCQ1 with respect to $\tau$.
\begin{proof}
Suppose that $p,q \in \bA$ are projections and that $\tau(p) \leq \tau(q)$.

The case where $\tau(p) < \tau(q)$ follows from strict comparison.

If $\tau(p) = \tau(q)$, then $[p]_0 = [q]_0$ by the injectivity of $\tau_*$. As $\bA$ has stable rank one, it also has cancellation, by \cite[6.5.1]{BlackadarKBook}.  From this it follows that $p \simeq q$, which clearly implies that $p \preceq q$.
\end{proof}
\end{lemma}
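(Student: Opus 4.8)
The plan is to verify FCQ1 straight from its definition. Concretely, I take projections $p,q \in \bA$ with $\tau(p) \le \tau(q)$ and must exhibit a Murray--von Neumann equivalence of $p$ with a subprojection of $q$, i.e. establish $p \lesssim q$. The natural organising principle is to split according to whether the trace inequality is strict, $\tau(p) < \tau(q)$, or an equality, $\tau(p) = \tau(q)$, since these two regimes are handled by the two separate hypotheses in play --- strict comparison and injectivity of $\tau_*$ respectively.

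In the strict regime $\tau(p) < \tau(q)$ I would appeal directly to the assumed strict comparison of projections, which immediately yields $p \prec q$ and in particular $p \lesssim q$. The one point to keep in mind is that strict comparison, as formulated, asks for the inequality of traces to hold at \emph{every} tracial state, whereas the hypothesis supplies only the single trace $\tau$; in the cases of interest $\tau$ is the unique trace (so this is a non-issue), and in general one reads strict comparison as applied to the distinguished faithful trace $\tau$ furnished by simplicity.

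The equality regime $\tau(p) = \tau(q)$ carries the actual content. Here $\tau(p) = \tau(q)$ says exactly that $\tau_*\big([p]_0 - [q]_0\big) = 0$, so the injectivity of the pairing map $\tau_* \colon K_0(\bA) \to \bR$ forces $[p]_0 = [q]_0$ in $K_0(\bA)$. This is, however, only an identity in the Grothendieck group: a priori it merely guarantees that $p \oplus r$ and $q \oplus r$ are Murray--von Neumann equivalent in some matrix amplification after adjoining an auxiliary projection $r$. To strip off $r$ and obtain an honest equivalence of the original projections, I would invoke the cancellation property, which holds because $\bA$ has stable rank one (Blackadar, \cite[6.5.1]{BlackadarKBook}); cancellation is precisely the statement that the canonical map from the semigroup of Murray--von Neumann classes into $K_0(\bA)$ is injective. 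This delivers $p \MvN q$, and since any projection is subequivalent to itself, we conclude $p \lesssim q$.

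I expect the cancellation step to be the crux. The genuinely delicate passage is translating the purely $K$-theoretic equality $[p]_0 = [q]_0$ back into a bona fide equivalence of $p$ and $q$ --- exactly the implication that fails for the simple AF algebras with unique trace alluded to just before the lemma, where $\tau_*$ is not injective. It is the \emph{conjunction} of injectivity of $\tau_*$ (to reach the $K_0$ identity) with stable rank one (to supply cancellation) that rescues FCQ1; by comparison, the strict case is essentially immediate once strict comparison is granted.
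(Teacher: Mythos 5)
Your proof is correct and follows essentially the same route as the paper's: strict comparison handles the case $\tau(p) < \tau(q)$, and in the case $\tau(p) = \tau(q)$ injectivity of $\tau_*$ gives $[p]_0 = [q]_0$, after which cancellation from stable rank one (the same reference, \cite[6.5.1]{BlackadarKBook}) upgrades the $K_0$ identity to Murray--von Neumann equivalence, hence $p \preceq q$. Your additional remarks --- that strict comparison should be read with respect to the distinguished trace $\tau$, and that cancellation is exactly what converts the Grothendieck-group equality into honest equivalence --- are accurate elaborations of steps the paper leaves implicit, not a different argument.
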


%%%%%%%%%%%%%%%%%%%%%%%%%%%%%%%%%%%%%%%%%%

The above will allow us to additionally conclude that many algebras satisfy (\ref{eq: dist for self-adjoints}) when we restrict to the self-adjoint elements.

%%%%%%%%%%%%%%%%%%%%%%%%%%%%%%%%%%%%%%%%%%

\begin{cor}
Let $\bA$ be a unital, simple, separable, $\cZ$-stable C*-algebra of real rank zero with unique tracial state $\tau$ such that the pairing map $\tau_*: K_0(\bA) \to \bR$ is injective.
Then, whenever $a,b \in \bA_{sa}$, we have
\begin{equation}
\dist(U(a^{(n)}),U(b^{(n)})) = \dist(U(a),U(b))
\end{equation}
for all $n \in \bN$.

\begin{proof}
It is known that unital, simple, $\cZ$-stable C*-algebras are finite if and only if they have stable rank one \cite{Rordam04}, so the existence of a tracial state ensures that $\bA$ has stable rank one.
Moreover, $\cZ$-stable C*-algebras are known to have \emph{strict comparison of positive elements} \cite{Rordam04,MatuiSato12}, i.e., for $x,y \in \bA$, $d_\tau(x) < d_\tau(y)$ implies that $x \precsim y$, where the subequivalence is given by Cuntz subequivalence. As $d_\tau(p) = \tau(p)$ for a projection $p \in \bA$ and, for two projections $p,q \in \bA$, we have that $p \preceq q \iff p \lesssim q$ \cite[Proposition 2.1]{Rordam92}, it follows that such C*-algebras have strict comparison of projections.
The result now follows from the above.
\end{proof}
\end{cor}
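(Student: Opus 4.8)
The plan is to reduce everything to Proposition~\ref{prop: KPn for self-adjoints in nice C*-algebras}, whose hypotheses require that $\bA$ be a unital, simple, stably finite C*-algebra of real rank zero carrying a tracial state, and that every matrix amplification $\bM_n(\bA)$ satisfy Blackadar's FCQ1 with respect to $\tau_n = \frac{1}{n}\tr_n \otimes \tau$. Real rank zero, simplicity, separability, and the existence of the (unique) trace are handed to us directly, and all of these pass without difficulty to each $\bM_n(\bA)$ (matrix amplifications of simple algebras are simple, real rank zero is a stable property, and traces on $\bM_n(\bA)$ correspond to traces on $\bA$). The genuine content is therefore to produce FCQ1 for every amplification, and for this I would invoke Lemma~\ref{lem3.08}. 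Thus the task splits into verifying, for each $n$, that $\bM_n(\bA)$ has stable rank one, satisfies strict comparison of projections, and has injective pairing map $(\tau_n)_* : K_0(\bM_n(\bA)) \to \bR$.

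First I would dispose of stable rank one and stable finiteness. Since $\bA$ is simple, its unique tracial state is faithful, so $\bA$ — and hence each $\bM_n(\bA)$ — is stably finite. Invoking the dichotomy that a unital, simple, $\cZ$-stable C*-algebra is finite precisely when it has stable rank one \cite{Rordam04}, I conclude that $\bA$ has stable rank one; the same reasoning applied to $\bM_n(\bA)$, which is itself unital, simple, and $\cZ$-stable since $\bM_n(\bA) \otimes \cZ \cong \bM_n(\bA \otimes \cZ) \cong \bM_n(\bA)$, gives stable rank one for every amplification.

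Next I would establish strict comparison of projections. The key input is that $\cZ$-stable C*-algebras enjoy strict comparison of positive elements \cite{Rordam04,MatuiSato12}: $d_\tau(x) < d_\tau(y)$ forces the Cuntz subequivalence $x \precsim y$. Specialising to projections, one uses $d_\tau(p) = \tau(p)$ together with the fact that for projections Cuntz subequivalence and Murray--von Neumann subequivalence coincide \cite[Proposition 2.1]{Rordam92}, and that with a unique trace the clause ``for all traces'' collapses to the single $\tau$; this yields strict comparison of projections, and the identical argument applies verbatim to each $\bM_n(\bA)$.

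The remaining and most delicate point is the injectivity of the amplified pairing map, the one hypothesis of Lemma~\ref{lem3.08} that does not transfer by inspection. I would handle it by transporting injectivity along the standard stabilisation isomorphism: writing $\iota : \bA \to \bM_n(\bA)$, $a \mapsto e_{11} \otimes a$, for the canonical embedding onto a full corner, the induced map $\iota_* : K_0(\bA) \to K_0(\bM_n(\bA))$ is an isomorphism, while a direct trace computation gives $\tau_n \circ \iota = \frac{1}{n}\tau$ and hence $(\tau_n)_* \circ \iota_* = \frac{1}{n}\tau_*$. Since $\tau_*$ is injective by hypothesis and $\iota_*$ is an isomorphism, $(\tau_n)_*$ is injective. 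With stable rank one, strict comparison of projections, and injectivity of $(\tau_n)_*$ all in hand, Lemma~\ref{lem3.08} yields FCQ1 for each $\bM_n(\bA)$ with respect to $\tau_n$, and Proposition~\ref{prop: KPn for self-adjoints in nice C*-algebras} then delivers the claimed equality of distances. The main obstacle I anticipate is precisely this last step: one must be careful that it is the amplified pairing $(\tau_n)_*$, not merely $\tau_*$, that Lemma~\ref{lem3.08} needs, and confirm that the normalising factor $1/n$ does not disturb injectivity.
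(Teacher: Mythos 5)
Your proposal is correct and follows essentially the same route as the paper: deduce stable rank one from R{\o}rdam's finite-iff-stable-rank-one dichotomy for $\cZ$-stable algebras, pass from strict comparison of positive elements to strict comparison of projections, and feed everything into Lemma~\ref{lem3.08} and Proposition~\ref{prop: KPn for self-adjoints in nice C*-algebras}. The paper's proof is terser, verifying the hypotheses only for $\bA$ itself and leaving the transfer to each $\bM_n(\bA)$ implicit, so your explicit checks there --- in particular the injectivity of $(\tau_n)_*$ via the corner embedding and the identity $(\tau_n)_* \circ \iota_* = \frac{1}{n}\tau_*$ --- fill in details the paper omits rather than constituting a different argument.
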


%%%%%%%%%%%%%%%%%%%%%%%%%%%%%%%%%%%%%%%%%%

The above class includes monotracial simple AF algebras whose pairing between $K_0$ and traces is injective (in particular, UHF algebras), Bunce-Deddens algebras \cite[Section V.3]{DavidsonBook1}, and irrational rotation algebras \cite[Chapter VI]{DavidsonBook1}. The former have real rank zero since this property is preserved by inductive limits, and are $\cZ$-stable by \cite[Theorem 5(a)]{JiangSu99}.
The latter two fall into the class of \emph{approximately divisible C*-algebras}, which are known to have real rank zero and be $\cZ$-stable by \cite{BKR92} and \cite[Theorem 2.3]{TomsWinter08}, respectively. One can further come up with many examples arising from classification: the $K$-theory and traces, together with the pairing between them, forms a complete invariant for the class of unital, simple, separable, nuclear, $\cZ$-stable C*-algebras satisfying the UCT; see \cite{CGSTW23} for recent developments and a classification of morphisms in terms of a slightly enlarged invariant.
With this, one can take large classes of unital, simple, separable, nuclear, $\cZ$-stable C*-algebras, insist that these C*-algebras are monotracial (which implies finiteness, which is equivalent to stable rank one in this setting), insist that the pairing map
\begin{equation}
\tau_*: K_0(\bA) \to \bR
\end{equation}
have dense range, which is equivalent to having real rank zero \cite{Rordam04} for these C*-algebras, and that the pairing map also be injective.
Thus, for example, whenever $G \subseteq \bR$ is a dense subgroup, one can find a monotracial C*-algebra $\bA$ with $K_0(\bA) = G$, with the pairing map being the identity map, such that $\bA$ satisfies the hypotheses  above.

%%%%%%%%%%%%%%%%%%%%%%%%%%%%%%%%%%%%%%%%%%
%%%%%%%%%%%%%%%%%%%%%%%%%%%%%%%%%%%%%%%%%%
%%%%%%%%%%%%%%%%%%%%%%%%%%%%%%%%%%%%%%%%%%

\subsection{Self-adjoint elements with connected spectrum} \label{subs:positive elements with 01 spectrum}
% in simple $\cZ$-stable C*-algebras}

Under the $\cZ$-stability assumption, the results in \cite{JacelonStrungToms15} (see also \cite{Cheong15,JacelonStrungVignati21}) allow one to remove the real rank zero assumption in order to compare unitary orbits of certain elements. However, it comes at the cost of restricting ourselves to self-adjoint elements with connected spectrum.
The key here is that, in highly regular C*-algebras, the distance between unitary orbits of certain positive elements agrees with
\begin{equation}
\begin{split}
d_W(a,b) := \inf \{r > 0 \mid & (a - (t + r))_+\precsim (b-t)_+ \\
&\text{ and } (b - (t + r))_+ \precsim (a - t)_+ \text{ for all } t > 0\}.
\end{split}
\end{equation}
Furthermore, this also agrees, for nice elements with spectrum equal to the unit interval $[0,1]$ in such C*-algebras, with the L\'{e}vy-Prokhorov distance $d_P(a,b)$, which is the infimum over all $r > 0$ such that
\begin{equation}
\mu_{\tau,a}(U_r) \geq \mu_{\tau,b}(U) \text{ and } \mu_{\tau,b}(U_r) \geq \mu_{\tau,a}(U)
\end{equation}
for every open $U \subseteq (0,1]$ and every $\tau \in T(\bA)$, where $\mu_{\tau,a},\mu_{\tau,b}$ are the Borel measures induced by $\tau$ on $C^*(a)$ and $C^*(b)$ respectively and
\[
U_r = \{t \mid \dist(t,U) < r\}. \]

%%%%%%%%%%%%%%%%%%%%%%%%%%%%%%%%%%%%%%%%%%

This quantity $d_P(a,b)$ is clearly invariant under matrix amplifications since it only depends on the Borel measures induced by $\tau$ on $C^*(a) \simeq C^*(a^{(n)})$ and $C^*(b) \simeq C^*(b^{(n)})$.

%%%%%%%%%%%%%%%%%%%%%%%%%%%%%%%%%%%%%%%%%%

\begin{prop}
Let $\bA$ be a unital, simple, separable, stably finite, $\cZ$-stable, exact C*-algebra and let $a, b \in \bA_{sa}$ be self-adjoint elements with connected spectrum $[\alpha, \beta]$.   Then
\begin{equation}
\dist(U(a),U(b)) = \dist(U(a^{(n)}),U(b^{(n)}))
\end{equation}
for all $n \in \bN$.
\begin{proof}

Let us first consider the case where $a,b \in \bA_+$ are positive elements with $\sigma(a) = \sigma(b) = [0,1]$.

Making note that $d_P$ is stable under matrix amplifications, this follows from the result \cite[Theorem 5.2]{JacelonStrungToms15} that says such elements in such a C*-algebra satisfy
\begin{equation}
\begin{split}
\dist(U(a),U(b)) &= d_P(a,b) \\
&= d_W(a,b).
\end{split}
\end{equation}

If $\sigma(a) = \sigma(b) = [\alpha, \beta] \subseteq \bR$, then we note that $a_0 := \frac{1}{\beta- \alpha}(a- \alpha 1_{\bA})$ and $b_0 := \frac{1}{\beta- \alpha}(b- \alpha 1_{\bA})$ are positive elements with spectrum equal to $[0,1]$.   From above,
\[
\dist(U(a_0),U(b_0)) = \dist(U(a_0^{(n)}),U(b_0^{(n)})). \]

But a routine calculation shows that $\dist(U(a), U(b)) = (\beta - \alpha) \dist(U(a_0), U(b_0))$ and similarly $\dist(U(a^{(n)}),U(b^{(n)})) = (\beta - \alpha) \dist(U(a_0^{(n)}),U(b_0^{(n)}))$, from which the result immediately follows.
\end{proof}
\end{prop}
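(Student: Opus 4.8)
The plan is to reduce to the normalized case of positive elements whose spectrum is exactly the unit interval, and then to leverage the amplification-invariance of the L\'{e}vy--Prokhorov distance $d_P$ together with the comparison theorem of Jacelon--Strung--Toms.

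First I would reduce to the case $\sigma(a) = \sigma(b) = [0,1]$, assuming $\alpha < \beta$ (the case $\alpha = \beta$, where $a = b = \alpha 1_\bA$, being trivial). Given self-adjoint $a, b$ with common connected spectrum $[\alpha,\beta]$, set $a_0 = \frac{1}{\beta - \alpha}(a - \alpha 1_\bA)$ and $b_0 = \frac{1}{\beta - \alpha}(b - \alpha 1_\bA)$; these are positive with spectrum $[0,1]$. Since conjugation by a unitary commutes with this affine rescaling --- explicitly $u^* a u = (\beta - \alpha)\, u^* a_0 u + \alpha 1_\bA$, so that the constant terms cancel in any difference --- a direct computation gives $\dist(U(a),U(b)) = (\beta - \alpha)\dist(U(a_0),U(b_0))$, and the identical argument at the level of the $n$-fold amplifications in $\bM_n(\bA)$ gives $\dist(U(a^{(n)}),U(b^{(n)})) = (\beta - \alpha)\dist(U(a_0^{(n)}),U(b_0^{(n)}))$. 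Thus it suffices to establish the equality for $a_0, b_0$.

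For the normalized case I would invoke \cite[Theorem 5.2]{JacelonStrungToms15}, which asserts that for positive elements with spectrum $[0,1]$ in a C*-algebra satisfying the stated hypotheses one has $\dist(U(a_0),U(b_0)) = d_P(a_0,b_0) = d_W(a_0,b_0)$. The crucial observation, already recorded above, is that $d_P$ is insensitive to amplification: it is computed purely from the Borel measures that each tracial state induces on $C^*(a_0) \simeq C^*(a_0^{(n)})$ and $C^*(b_0) \simeq C^*(b_0^{(n)})$, and these measures are unchanged upon passing to $a_0^{(n)}, b_0^{(n)}$. Hence $d_P(a_0^{(n)},b_0^{(n)}) = d_P(a_0,b_0)$.

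The one point requiring care --- and the main obstacle --- is that to apply the Jacelon--Strung--Toms theorem to $a_0^{(n)}, b_0^{(n)}$ I must know that the amplified algebra $\bM_n(\bA) \cong \bM_n \otimes \bA$ again satisfies all of the structural hypotheses. This amounts to checking that unitality, simplicity, separability, stable finiteness, $\cZ$-stability, and exactness all pass to $\bM_n \otimes \bA$; the only slightly delicate ones are stable finiteness (which is precisely the assertion that $\bM_m(\bM_n(\bA)) = \bM_{mn}(\bA)$ is finite for all $m$) and $\cZ$-stability (which survives tensoring by $\bM_n$ since $\bM_n \otimes \bA \otimes \cZ \cong \bM_n \otimes \bA$). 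Granting this, the theorem applies verbatim to the amplifications, and chaining the equalities
\[
\dist(U(a_0^{(n)}),U(b_0^{(n)})) = d_P(a_0^{(n)},b_0^{(n)}) = d_P(a_0,b_0) = \dist(U(a_0),U(b_0))
\]
and then undoing the affine normalization yields the proposition.
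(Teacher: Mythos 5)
Your proposal is correct and follows essentially the same route as the paper: reduce by affine rescaling to positive elements with spectrum $[0,1]$, apply \cite[Theorem 5.2]{JacelonStrungToms15}, and use the amplification-invariance of $d_P$. Your explicit verification that the structural hypotheses (simplicity, stable finiteness, $\cZ$-stability, exactness) pass to $\bM_n(\bA)$ is a point the paper leaves implicit, and is a welcome addition rather than a departure.
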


%%%%%%%%%%%%%%%%%%%%%%%%%%%%%%%%%%%%%%%%%%
%%%%%%%%%%%%%%%%%%%%%%%%%%%%%%%%%%%%%%%%%%
%%%%%%%%%%%%%%%%%%%%%%%%%%%%%%%%%%%%%%%%%%

\subsection{Normal elements in UHF algebras}\label{subs: normal elements in uhfs}
As UHF algebras are monotracial simple C*-algebras with real rank zero which fall into the scope of classification, Kaplansky's problem is known to hold for self-adjoint elements in UHF algebras by the results in Proposition \ref{prop: KPn for self-adjoints in nice C*-algebras}.

If $\bA$ is a unital C*-algebra admitting a tracial state $\tau$, we say that two elements $a, b \in \bA$ \emph{satisfy Specht's condition} (with respect to $\tau$) if
\[
\tau(w(a, a^*)) = \tau(w(b, b^*)) \]
for all words $w(x,y)$ in two non-commuting variables $x$ and $y$.   Specht~\cite{Specht1940} showed that two matrices $a, b \in \bM_n$ satisfy this condition if and only if $a$ is unitarily equivalent to $b$.   In passing from $\bM_n$ to (infinite-dimensional) C*-algebras, the continuity of $\tau$ implies that the most one might be able to hope for is that $a$ and $b$ satisfy Specht's condition if and only if $a$ is \emph{approximately} unitarily equivalent to $b$.   For normal elements of UHF-algebras, it was shown in \cite{MarcouxZhang21} that this is indeed the case.  (In the same paper, an example of two elements $a, b$ of the UHF-algebra $\bM_{2^\infty}$ which satisfy Specht's condition but which fail to be approximately unitarily equivalent was given, although they do become approximately unitarily equivalent when considered as elements of the universal UHF-algebra $\cQ$.)

\begin{prop}
Let $n \in \bN$, $\bA$ be a UHF algebra and $a,b \in \bA$ be normal. Then
\begin{equation}
a^{(n)} \simeq_a b^{(n)} \text{ if and only if } a \simeq_a b.
\end{equation}
\begin{proof}
It is clear that  $a^{(n)}$ and $b^{(n)}$ satisfy Specht's condition if and only if $a$ and $b$ do, since the trace is invariant under amplification (once we normalize the amplified trace). Thus, by \cite[Proposition 2.7]{MarcouxZhang21}, $a$ and $b$ are approximately unitarily equivalent.  The converse is straightforward.
\end{proof}
\end{prop}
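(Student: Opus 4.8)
The plan is to reduce the whole statement to the Specht-condition characterisation of approximate unitary equivalence for normal elements of UHF algebras, namely \cite[Proposition 2.7]{MarcouxZhang21}, which asserts that two normal elements of a UHF algebra are approximately unitarily equivalent if and only if they satisfy Specht's condition with respect to the (unique) trace. First I would check that the cited result is legitimately applicable to the amplifications. If $\bA = \overline{\bigcup_j \bM_{k_j}}$ is UHF, then $\bM_n(\bA) = \bM_n \otimes \bA = \overline{\bigcup_j \bM_{n k_j}}$ is again UHF, with unique tracial state $\tau_n = \frac{1}{n}\tr_n \otimes \tau$; moreover $a^{(n)} = 1_{\bM_n} \otimes a$ and $b^{(n)} = 1_{\bM_n} \otimes b$ are normal whenever $a,b$ are. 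Hence $a^{(n)} \simeq_a b^{(n)}$ in $\bM_n(\bA)$ is, by Proposition 2.7 applied in $\bM_n(\bA)$, equivalent to $a^{(n)}$ and $b^{(n)}$ satisfying Specht's condition with respect to $\tau_n$.

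The heart of the argument is then a short trace computation showing that Specht's condition is insensitive to amplification. Since $a^{(n)} = 1_{\bM_n}\otimes a$ and $(a^{(n)})^* = 1_{\bM_n}\otimes a^*$ both lie in the copy $1_{\bM_n}\otimes\bA$, any word $w(x,y)$ in two non-commuting variables satisfies
\[
w(a^{(n)}, (a^{(n)})^*) = 1_{\bM_n} \otimes w(a, a^*).
\]
Applying $\tau_n$ and using $\tr_n(1_{\bM_n}) = n$ gives
\[
\tau_n\bigl(w(a^{(n)}, (a^{(n)})^*)\bigr) = \tfrac{1}{n}\,\tr_n(1_{\bM_n})\,\tau\bigl(w(a,a^*)\bigr) = \tau\bigl(w(a,a^*)\bigr),
\]
and likewise for $b$. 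Thus $a^{(n)}, b^{(n)}$ satisfy Specht's condition with respect to $\tau_n$ if and only if $a, b$ satisfy it with respect to $\tau$.

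Chaining these equivalences yields the forward direction:
\[
a^{(n)} \simeq_a b^{(n)} \iff \text{Specht for } a^{(n)}, b^{(n)} \iff \text{Specht for } a, b \iff a \simeq_a b,
\]
the final equivalence being Proposition 2.7 applied in $\bA$ itself. For the converse I would simply amplify unitaries: if $u_k \in U(\bA)$ satisfy $u_k^* a u_k \to b$ in norm, then $u_k^{(n)} \in U(\bM_n(\bA))$ and $(u_k^{(n)})^* a^{(n)} u_k^{(n)} = (u_k^* a u_k)^{(n)} \to b^{(n)}$, so $a^{(n)} \simeq_a b^{(n)}$; this direction needs neither normality nor that $\bA$ is UHF.

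I do not anticipate a genuine obstacle, since all of the analytic difficulty is absorbed into the cited Proposition 2.7. The only points requiring care are structural rather than hard: verifying that $\bM_n(\bA)$ is itself a UHF algebra (so the cited characterisation is available) and tracking the normalisation factor $\frac{1}{n}$ in $\tau_n$ so that it cancels against $\tr_n(1_{\bM_n}) = n$. It is precisely this cancellation that makes Specht's condition amplification-invariant and drives the entire argument.
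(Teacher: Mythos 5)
Your proposal is correct and follows essentially the same route as the paper: reduce to Specht's condition via \cite[Proposition 2.7]{MarcouxZhang21} and observe that the normalized trace makes Specht's condition invariant under amplification. You merely fill in the details the paper leaves implicit (that $\bM_n(\bA)$ is again UHF, the cancellation $\frac{1}{n}\tr_n(1_{\bM_n}) = 1$, and the amplification of unitaries for the converse), all of which are exactly the intended justifications.
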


%%%%%%%%%%%%%%%%%%%%%%%%%%%%%%%%%%%%%%%%%%

In the next section (see Corollary \ref{cor: CAR elements not satisfying 3-KP}), we shall demonstrate the surprising result that even in UHF-algebras, it is possible to find (necessarily non-normal) elements $a$ and $b$ for which the distance between the unitary orbits of  \emph{some} amplifications decreases, while the distance between the unitary orbits of other amplifications of the same elements remains the same.

%%%%%%%%%%%%%%%%%%%%%%%%%%%%%%%%%%%%%%%%%%
%%%%%%%%%%%%%%%%%%%%%%%%%%%%%%%%%%%%%%%%%%
%%%%%%%%%%%%%%%%%%%%%%%%%%%%%%%%%%%%%%%%%%

\subsection{Normal elements in von Neumann algebras}\label{subs:normal elements in vN}
As previously mentioned, it is known that  if $\cM$ is a general von Neumann algebra $\cM$ and if $a,b \in \cM$ are arbitrary, then for any $n \in \bN$, the  unitary equivalence of $a^{(n)}$ and $b^{(n)}$ in $\bM_n(\cM)$ implies the unitary equivalence of $a$ and $b$ in $\cM$, cf. \cite{Azoff95} and \cite[Appendix A.2]{Sherman10}. Here we give a partial treatment of the same question, except with unitary equivalence replaced with approximate unitary equivalence. We note that this version of Kaplansky's problem holds for arbitrary elements of $\bh$ by \cite[Proposition 4.4]{MRTZ24}.

In \cite{Sherman07Unitary}, closures of unitary orbits of normal operators in von Neumann algebras were considered.
In a von Neumann algebra $\cM$, a normal element $a \in \cM$, and an open set $U \subseteq \bC$, we will use $\chi_U(a)$ to denote the associated spectral projection of $a$.  We denote the set of all projections in $\cM$ by $P(\cM)$.   Murray-von Neumann equivalence of projections in $\cM$ is an equivalence relation, and we denote the equivalence classes of $\cM$ relative to this relation by $P(\cM)/\MvN$. The following is \cite[Theorem 1.3(2)]{Sherman07Unitary}.

%%%%%%%%%%%%%%%%%%%%%%%%%%%%%%%%%%%%%%%%%%

\begin{theorem} \textbf{\emph{[Sherman]}}
Let $a,b \in \cM$ be two normal elements in a von Neumann algebra $\cM$. Then $a$ and $b$ are approximately unitarily equivalent if and only if for any open set $U \subseteq \bC$, $\chi_U(a) \MvN \chi_U(b)$.
\end{theorem}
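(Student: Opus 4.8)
The plan is to prove the two implications separately, viewing the theorem as the assertion that the family of Murray--von Neumann classes $\{[\chi_U(a)]\}_{U \text{ open}}$ is a complete invariant for approximate unitary equivalence of normal elements. \textbf{Necessity} ($a \simeq_a b \Rightarrow \chi_U(a) \MvN \chi_U(b)$) amounts to showing this invariant survives small perturbations, while \textbf{sufficiency} reconstructs the implementing unitaries from a matching of spectral projections.

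For necessity, fix an open $U$. By the symmetry of $\simeq_a$ and of the conclusion, together with the Murray--von Neumann Schr\"oder--Bernstein theorem (if $p \lesssim q$ and $q \lesssim p$ then $p \MvN q$), it suffices to prove $\chi_U(a) \lesssim \chi_U(b)$. Choose unitaries $u_n$ with $u_n^* a u_n \to b$. For a compact $K \subseteq U$, pick a continuous $f \colon \bC \to [0,1]$ with $f \equiv 1$ on $K$ and $\operatorname{supp} f$ a compact subset of $U$. Since functional calculus by a fixed continuous function is norm-continuous, $u_n^* f(a) u_n = f(u_n^* a u_n) \to f(b)$. R\o rdam's comparison lemma then gives, for small $\delta \in (0,1)$ and all large $n$, $(u_n^* f(a) u_n - \delta)_+ \precsim f(b)$; passing to range projections in $\cM$ yields $u_n^* \chi_{\{f > \delta\}}(a) u_n \lesssim \chi_{\{f > 0\}}(b) \le \chi_U(b)$. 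As $u_n^* \chi_{\{f>\delta\}}(a) u_n \MvN \chi_{\{f>\delta\}}(a) \ge \chi_K(a)$, we obtain $\chi_K(a) \lesssim \chi_U(b)$ for every compact $K \subseteq U$. Finally $\chi_U(a) = \sup_K \chi_K(a)$ as an increasing net converging strongly, and one passes the comparison through this supremum.

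For sufficiency, fix $\ee > 0$ and a disc $D \supseteq \sigma(a) \cup \sigma(b)$. The goal is a finite Borel partition $D = \bigsqcup_i B_i$ with $\operatorname{diam}(B_i) < \ee$ and $\chi_{B_i}(a) \MvN \chi_{B_i}(b)$ for all $i$. Granting this, choose partial isometries $v_i$ with $v_i^* v_i = \chi_{B_i}(a)$ and $v_i v_i^* = \chi_{B_i}(b)$; since both families are orthogonal and sum to $1_\cM$, the element $u := \sum_i v_i$ is unitary. Each $\chi_{B_i}(b)$ commutes with $b$ and $b$ has spectrum in $B_i$ on its range, so for any $\lambda_i \in B_i$ the block-diagonal operator $S := \sum_i (b - \lambda_i)\chi_{B_i}(b)$ has norm $< \ee$; since $\sum_i (b-\lambda_i)\chi_{B_i}(b) v_i = S u$, one computes $u^* b u = \sum_i \lambda_i \chi_{B_i}(a) + u^* S u$, whence $\|u^* b u - \sum_i \lambda_i \chi_{B_i}(a)\| < \ee$. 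As $\|a - \sum_i \lambda_i \chi_{B_i}(a)\| < \ee$ by the same block-diagonal estimate, $\|u^* b u - a\| < 2\ee$, and letting $\ee \to 0$ gives $a \simeq_a b$.

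The \textbf{main obstacle} is producing the matched partition in the sufficiency step, since the hypothesis only constrains \emph{open} sets. The natural route is a grid of half-open squares of side $< \ee/\sqrt{2}$: for such a square $Q$ one gets $\chi_Q(a) \MvN \chi_Q(b)$ by expressing $\chi_Q$ through open-box projections and using additivity of $\MvN$ over orthogonal sums, \emph{provided} the spectral projections of $a$ and $b$ vanish on the grid lines. One therefore wants grid lines carrying no spectral mass for either element; when $\cM$ has separable predual this is automatic, as only countably many parallel lines can support nonzero (orthogonal) spectral projections. This boundary-avoidance, together with the semicontinuity of comparison under increasing suprema used at the end of the necessity argument, both reduce to the normality of the dimension/comparison theory of $\cM$ --- handled through the center-valued trace on the finite part and the homogeneity of the properly infinite part in the general, non-$\sigma$-finite case. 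I expect this boundary and semicontinuity bookkeeping, rather than the algebraic assembly of the unitary, to be the hardest part of the proof.
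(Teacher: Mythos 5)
The paper does not prove this theorem; it is imported directly from Sherman \cite[Theorem 1.3(2)]{Sherman07Unitary}, so your proposal has to be judged on its own terms rather than against an internal argument. Its skeleton --- necessity via perturbation-stability of spectral comparison, sufficiency via a fine partition of the plane, matched partial isometries, and a block-diagonal estimate --- is the classical Weyl--von Neumann--Berg/Hadwin strategy, and much of it is sound: the unitary-assembly estimate giving $\|u^*bu - a\| < 2\ee$ is correct, and in the necessity half the exact form of R{\o}rdam's lemma ($\|x-y\|<\delta$ with $x,y\ge 0$ yields $(x-\delta)_+ = ryr^*$) really does give $\chi_K(a) \lesssim \chi_U(b)$ for every compact $K \subseteq U$. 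But your final step there is phrased for \emph{nets}, and Murray--von Neumann comparison does not pass to suprema of increasing nets: in $\cB(\cH)$ with $\dim \cH = \aleph_1$, every countable-rank subprojection of a rank-$\aleph_1$ projection is subequivalent to a fixed rank-$\aleph_0$ projection, while the supremum is not. You must instead use $\sigma$-compactness of $U$ to exhaust by an increasing \emph{sequence} of compacts; the sequential statement does hold in every von Neumann algebra, by the dichotomy you name (cancellation, via the centre-valued trace, inside the corner of a finite dominating projection; halving, $q \sim q \oplus q \oplus \cdots$, when the dominating projection is properly infinite). So necessity is correct after that repair.

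The genuine gap is in sufficiency, outside the $\sigma$-finite case, and the remedy you gesture at does not close it. Boundary-avoidance is not bookkeeping that ``reduces to normality of the dimension theory'': in a non-$\sigma$-finite algebra there may be \emph{no} admissible grid at all. Take $\cM = \cB(\ell^2(\bR))$ and let $a$ be the diagonal normal operator with $a\delta_t = t\delta_t$ for $t \in [0,1]$ and $a\delta_t = 0$ otherwise; every vertical line $\{\mathrm{Re}\,z = t\}$ with $t\in[0,1]$ carries a nonzero spectral projection of $a$, so every sufficiently fine grid meets spectral mass, no matter how it is translated. Nor can you reach half-open squares from open sets by orthogonal additivity alone: Murray--von Neumann comparison has no cancellation, so the hypothesis $\chi_U(a) \MvN \chi_U(b)$ on open sets does not formally propagate to differences, boundaries, or closures. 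Your centre-valued trace idea does rescue all \emph{finite} von Neumann algebras --- there $B \mapsto \mathrm{ctr}(\chi_B(x))$ is a normal, countably additive centre-valued measure, a Dynkin $\pi$--$\lambda$ argument upgrades agreement on open sets to agreement on all Borel sets, and then any grid works --- and this already covers the paper's application to arbitrary type $\II_1$ algebras; but for properly infinite, non-$\sigma$-finite algebras the invariant is cardinal-valued, complements are not determined by the data, and ``homogeneity'' supplies no substitute. That case is precisely what makes Sherman's actual proof substantially longer than the classical argument, and it is the part your outline leaves open.
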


%%%%%%%%%%%%%%%%%%%%%%%%%%%%%%%%%%%%%%%%%%

Using the fact that the set of  projections modulo Murray-von Neumann equivalence in $\sigma$-finite factors form specific lattices, we can use Sherman's result above to show that pairs of normal elements in any $\sigma$-finite factor satisfy $[\text{KP}_n]$ for all $n \ge 1$.

We have the following identification, as lattices, based on the type (cf. \cite[Theorem III.1.7.9]{BlackadarBook} for example).
\begin{enumerate}
\item When $\cM$ is of type $\I_k$, where $k$ is finite or $\aleph_0$, $P(\cM)/\MvN \simeq \{0,1,\dots,k\}$.
\item When $\cM$ is of type $\II_1$, $P(\cM)/\MvN \simeq [0,1]$.
\item When $\cM$ is of type $\II_{\infty}$, $P(\cM)/\MvN \simeq [0,\infty]$.
\item When $\cM$ is of type $\III$, $P(\cM)/\MvN \simeq \{0,\infty\}$.
\end{enumerate}
We note that taking matrix amplifications of multiplicity $n \in \bN$ of a factor preserves its type, except in the $I_k$ case, where $k$ is finite. In this case, the $n$-th matrix amplification is of type $I_{nk}$.

%%%%%%%%%%%%%%%%%%%%%%%%%%%%%%%%%%%%%%%%%%

\begin{lemma} \label{factorcase}
Let $\cM$ be a $\sigma$-finite factor, $n \in \bN$, and $a,b \in \cM$ be normal. Then
\begin{equation}
a^{(n)} \simeq_a b^{(n)} \text{ if and only if } a \simeq_a b.
\end{equation}
\begin{proof}
By Sherman's theorem, this amounts to showing that if $U \subseteq \bC$ is an open set and $\chi_U(a^{(n)}) \MvN \chi_U(b^{(n)})$, then $\chi_U(a) \MvN \chi_U(b)$.
However, this can easily be deduced based on type.
Each of the sets
\begin{equation}
\{0,1,\dots,k\},[0,\infty],\{0,\infty\}
\end{equation}
satisfy the property that if $i,j$ are in the set and $n \in \bN$, then
\begin{equation}
ni = nj \Rightarrow i = j.
\end{equation}
As the map $x \mapsto x^{(n)}$ induces the map $i \mapsto n\cdot i$, which is injective, the result follows for factors of type $\I_k,\II_{\infty},\III$.
For the $\II_1$ case, the map $x \mapsto x^{(n)}$ actually induces the identity map between $P(\cM)/\MvN$ and $P(\bM_n(\cM))/\MvN$, so that this case follows for free.
\end{proof}
\end{lemma}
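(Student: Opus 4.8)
The plan is to run everything through Sherman's theorem, which converts the problem about normal elements into a problem about Murray-von Neumann classes of spectral projections. Applying Sherman's theorem both in $\cM$ and in $\bM_n(\cM)$ (whose type is recorded above), the biconditional to be proved becomes: for every open $U \subseteq \bC$, one has $\chi_U(a) \MvN \chi_U(b)$ in $\cM$ if and only if $\chi_U(a^{(n)}) \MvN \chi_U(b^{(n)})$ in $\bM_n(\cM)$. I would first dispose of the easy direction: if $a \simeq_a b$ via unitaries $u_k$ with $u_k^* a u_k \to b$, then the amplified unitaries $u_k^{(n)} \in U(\bM_n(\cM))$ satisfy $(u_k^{(n)})^* a^{(n)} u_k^{(n)} = (u_k^* a u_k)^{(n)} \to b^{(n)}$, so $a^{(n)} \simeq_a b^{(n)}$. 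The substance is therefore in the forward implication.

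The central observation is that spectral projections commute with amplification, namely $\chi_U(a^{(n)}) = (\chi_U(a))^{(n)}$, because $a^{(n)}$ is the $n$-fold diagonal direct sum of $a$ and the Borel functional calculus acts blockwise. Hence the forward direction reduces to a single clean statement: the map $P(\cM)/\MvN \to P(\bM_n(\cM))/\MvN$ induced by $p \mapsto [p^{(n)}]$ is injective. Under the lattice identifications listed above, I would check that this induced map is nothing but multiplication by $n$ on the relevant set of ``dimensions'' --- with the one exception of the $\II_1$ case, where, once the amplified trace is normalized as $\tau_n = \tfrac{1}{n}\tr_n \otimes \tau$, one has $\tau_n(p^{(n)}) = \tau(p)$, so the induced map is simply the identity on $[0,1]$ and injectivity is automatic.

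Injectivity in the remaining types then follows from the elementary cancellation rule $ni = nj \Rightarrow i = j$ holding in each of $\{0,1,\dots,k\}$, $[0,\infty]$, and $\{0,\infty\}$ (with $\bM_n(\cM)$ of type $\I_{nk}$ in the first case). Once injectivity is in hand, $\chi_U(a)^{(n)} \MvN \chi_U(b)^{(n)}$ forces $\chi_U(a) \MvN \chi_U(b)$ for every open $U$, and a final appeal to Sherman's theorem gives $a \simeq_a b$. The only place I expect to need genuine care is verifying the cancellation rule at the infinite values --- namely $k = \aleph_0$ in type $\I$, the endpoint $\infty$ in type $\II_\infty$, and the class $\infty$ in type $\III$ --- where $n \cdot \infty = \infty$ and $n \cdot \aleph_0 = \aleph_0$; I must confirm that these absorbing values are still only attained by themselves, so that cancellation is not lost. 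These are routine cardinal and order-theoretic checks, so I anticipate no serious obstacle beyond bookkeeping.
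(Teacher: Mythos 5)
Your proposal is correct and follows essentially the same route as the paper: reduce via Sherman's theorem to injectivity of the induced map $[p] \mapsto [p^{(n)}]$ from $P(\cM)/\MvN$ to $P(\bM_n(\cM))/\MvN$, identify this map type-by-type as multiplication by $n$ (the identity on $[0,1]$, after normalizing the trace, in the $\II_1$ case), and invoke cancellation in each of the lattices $\{0,1,\dots,k\}$, $[0,\infty]$, $\{0,\infty\}$. Your explicit verifications --- that $\chi_U(a^{(n)}) = \chi_U(a)^{(n)}$ by blockwise functional calculus, and that the absorbing values $\aleph_0$ and $\infty$ are attained only by themselves under multiplication by $n$ --- are details the paper leaves implicit, but the argument is the same.
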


%%%%%%%%%%%%%%%%%%%%%%%%%%%%%%%%%%%%%%%%%%

The above allows us to adapt the result to general separably representable von Neumann algebra (which automatically implies $\sigma$-finite).
First we need to recall some facts about direct integrals -- we refer to the reader to sections 14.1 and 14.2 of \cite{KadisonRingroseII} and section IV.8 of \cite{TakesakiII} for the details.

Let $(\Omega,\mu)$ be a measure space. Suppose that
\begin{equation}
\cH = \int_\Omega^\oplus \cH_\omega d\mu(\omega)
\end{equation}
is the direct integral of Hilbert spaces over $(\Omega,\mu)$. We say an operator $T \in \bh$ is \emph{decomposable} if there is a function $T \mapsto T_\omega$ on $\Omega$ such that $T_\omega \in \cB(\cH_\omega)$, and for each $\xi = (\xi_\omega)$ in $\cH$, $T_\omega(\xi_\omega) = (T\xi)_\omega$ for almost every $\omega$. Further, we say $T$ is \emph{diagonal} if the assignment $T_\omega$ is just a multiple of the identity $I_\omega \in \cB(\cH_\omega)$ for almost every $\omega$. We note that the set of decomposable operators is precisely the set of operators which commute with the diagonal operators. The following is \cite[Theorem IV.8.21]{TakesakiII}.

%%%%%%%%%%%%%%%%%%%%%%%%%%%%%%%%%%%%%%%%%%

\begin{theorem} \emph{\textbf{[Disintegration]}}

Every separably represented von Neumann algebra $\cM \subseteq \bh$ can be be written as the direct integral
\begin{equation}
\cM = \int_\Omega^\oplus \cM_\omega d\mu(\omega)
\end{equation}
over some measure space $(\Omega,\mu)$, where the centre of $\cM$ is precisely the von Neumann algebra of diagonal operators. In this case, every operator is decomposable and if $\cM$ is a von Neumann algebra of a certain type, say $X$, then $\cM_\omega$ is a factor of type $X$ for almost every $\omega$.
\end{theorem}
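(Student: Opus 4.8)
The plan is to disintegrate $\cM$ over the spectrum of its center. The center $\cM \cap \cM'$ is an abelian von Neumann algebra acting on the separable Hilbert space $\cH$, so by spectral multiplicity theory it is unitarily equivalent to $L^\infty(\Omega,\mu)$ acting by multiplication for some standard measure space $(\Omega,\mu)$. This identification furnishes a direct integral decomposition
\begin{equation}
\cH = \int_\Omega^\oplus \cH_\omega \, d\mu(\omega)
\end{equation}
in which $\cM \cap \cM'$ is carried exactly onto the algebra of diagonal operators. This is the foundational step, and it relies essentially on separability of $\cH$: separability guarantees that the center is countably generated and that the field $\omega \mapsto \cH_\omega$ is a measurable field of separable Hilbert spaces, which is what makes the standard-Borel structure on $(\Omega,\mu)$ available.

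First I would record that every element of $\cM$ is decomposable. Since $\cM \cap \cM' \subseteq \cM'$, every element of the center commutes with $\cM$, so $\cM \subseteq (\cM \cap \cM')'$; but as noted in the preliminaries, the commutant of the diagonals is precisely the algebra of decomposable operators. Hence each $T \in \cM$ admits a decomposition $T = \int_\Omega^\oplus T_\omega \, d\mu(\omega)$ with $T_\omega \in \cB(\cH_\omega)$. Next, using separability again, I would choose a sequence $(T_k)_k$ generating $\cM$ as a von Neumann algebra, decompose each $T_k = \int_\Omega^\oplus (T_k)_\omega \, d\mu(\omega)$, and define $\cM_\omega$ to be the von Neumann algebra on $\cH_\omega$ generated by $\{(T_k)_\omega : k \in \bN\}$. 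One then verifies that $\omega \mapsto \cM_\omega$ is a measurable field of von Neumann algebras and that $\cM = \int_\Omega^\oplus \cM_\omega \, d\mu(\omega)$, together with the companion identity $\cM' = \int_\Omega^\oplus \cM_\omega' \, d\mu(\omega)$ for the commutant.

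The factoriality of the fibres is then forced by the construction. Intersecting the two direct-integral identities yields
\begin{equation}
\cM \cap \cM' = \int_\Omega^\oplus (\cM_\omega \cap \cM_\omega') \, d\mu(\omega),
\end{equation}
but by construction the left-hand side is exactly the diagonal algebra, whose fibres are the scalars $\bC I_\omega$. Hence $\cM_\omega \cap \cM_\omega' = \bC I_\omega$ for almost every $\omega$, i.e. $\cM_\omega$ is a factor almost everywhere. For the assertion about type, I would use that each type is cut out by a central projection and is detected by fibrewise-measurable conditions (existence of abelian projections, finiteness via the relevant traces, comparison of projections); when $\cM$ is of pure type $X$ the corresponding central projection is the identity, so those defining conditions hold on almost every fibre and $\cM_\omega$ is a factor of type $X$ for almost every $\omega$.

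The main obstacle is the measure-theoretic bookkeeping in the middle step: establishing that the field $\omega \mapsto \cM_\omega$ is genuinely measurable, that the direct integral of the fibre algebras reconstitutes $\cM$, and—crucially—that the commutant reconstitutes as $\cM' = \int_\Omega^\oplus \cM_\omega' \, d\mu(\omega)$. This interchange of commutants with direct integrals is exactly what drives the factoriality argument, and verifying it rigorously, together with the standard-Borel structure inherited from separability, is where the real substance of the proof resides.
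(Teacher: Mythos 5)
The paper does not prove this statement at all: it is quoted verbatim as a classical result, with the proof deferred to the cited reference \cite[Theorem IV.8.21]{TakesakiII}. Your sketch is precisely the standard reduction-theory argument given there (and in Chapter 14 of \cite{KadisonRingroseII}): disintegrate $\cH$ over the spectrum of the centre so that the centre becomes the diagonal algebra, decompose a countable generating family to build the measurable field $\omega \mapsto \cM_\omega$, invoke the commutant theorem $\cM' = \int_\Omega^\oplus \cM_\omega'\, d\mu(\omega)$ to force $\cM_\omega \cap \cM_\omega' = \bC I_\omega$ almost everywhere, and check type fibrewise -- so your proposal matches the proof the paper implicitly relies on, with the hard measure-theoretic steps correctly identified rather than carried out.
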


%%%%%%%%%%%%%%%%%%%%%%%%%%%%%%%%%%%%%%%%%%

With this in hand, we are ready to prove that Kaplansky's problem has a positive answer if one restricts to normal elements in separably represented von Neumann algebras.

\begin{prop}
Let $\cM \subseteq \bh$ be a von Neumann algebra acting on a separable Hilbert space $\cH$, $n \in \bN$, and $a,b \in \cM$ be normal. Then
\begin{equation}
a^{(n)} \simeq_a b^{(n)} \text{ if and only if } a \simeq_a b.
\end{equation}
\begin{proof}

One direction is clear:  if $a \simeq_a b$, then obviously $a^{(n)} \simeq_a b^{(n)}$.

Suppose, therefore that $n \ge 2$ and that $a^{(n)} \simeq_a b^{(n)}$.

Every von Neumann algebra $\cM = \oplus_{j=1}^5 \cM_j$, where $\cM_1$ is of type $\I_k$, $\cM_2$ is of type $\I_\infty$, $\cM_3$ is of type $\II_1$, $\cM_4$ is of type $\II_\infty$ and $\cM_5$ is of type $\III$.   Thus we may write $a = \oplus_{j=1}^5 a_j$ and $b = \oplus_{j=1}^5 b_j$ relative to this decomposition.   As noted above, each of the above types are invariant under finite amplification, which implies that if $a^{(n)} \simeq_a b^{(n)}$, it follows that $a_j^{(n)} \simeq_a b_j^{(n)}$ for all $1 \le j \le 5$.     As such, if we can show that $\text{[KP}_n\text]$ holds for normal elements in von Neumann algebras of a fixed type, say $X$, then we may conclude that it holds in general.   Suppose, therefore, that $\cM$ is of type $X \in \{ \I_n, \I_\infty, \II_1, \II_\infty, \III \}$.

 Let us write
\begin{equation}
\cM = \int_\Omega^\oplus \cM_\omega d\mu(\omega)
\end{equation}
over a suitable measure space $(\cM,\mu)$ as above.

Now $\bM_n(\cM) = \int_{\Omega}^{\oplus} \bM_n(\cM_\omega) d\mu(\omega)$, and so if $a = \int_\Omega^\oplus a_\omega d\mu(\omega)$ and $b = \int_\Omega^\oplus b_\omega d\mu(\omega)$ relative to the above decomposition of $\cM$, then we may write
\[
a^{(n)} = \int_\Omega^\oplus a_\omega^{(n)} d\mu(\omega), \ \ \ \ \ b^{(n)} = \int_\Omega^\oplus b_\omega^{(n)} d\mu(\omega)\]
relative to the above decomposition of $\bM_n(\cM)$.

The hypothesis that $a^{(n)} \simeq_a b^{(n)}$ combined with Sherman's Theorem implies that
\begin{equation}
\chi_U(a_\omega)^{(n)} {\MvN} \chi_U(b_\omega)^{(n)}
\end{equation}
for almost all $\omega \in \Omega$.   But each $\cM_\omega$ is a factor, allowing us to conclude via Lemma~\ref{factorcase} that $\chi_U(a_\omega) {\MvN} \chi_U(b_\omega)$ a.e. with respect to $\omega \in \Omega$.   From this it follows that $\chi_U(a) \MvN \chi_U(b)$ a.e., and thus $a \simeq_a b$.
\end{proof}
\end{prop}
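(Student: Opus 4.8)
The plan is to reduce the statement, via Sherman's theorem and the disintegration theory of von Neumann algebras, to the factor case already established in Lemma~\ref{factorcase}. One implication is immediate: if $a \simeq_a b$, then amplifying the approximating unitaries diagonally gives $a^{(n)} \simeq_a b^{(n)}$. So I would fix $n \ge 2$, assume $a^{(n)} \simeq_a b^{(n)}$, and work to recover $a \simeq_a b$.

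First I would split $\cM = \bigoplus_j \cM_j$ into its type $\I_k$, $\I_\infty$, $\II_1$, $\II_\infty$, and $\III$ summands. Because each of these types is preserved under passage to $\bM_n(\cdot)$ (with $\I_k$ merely becoming $\I_{nk}$), the central projections cutting out the summands are unaffected by the amplification, so the hypothesis $a^{(n)} \simeq_a b^{(n)}$ restricts to $a_j^{(n)} \simeq_a b_j^{(n)}$ on each summand. It therefore suffices to treat a single type $X \in \{\I_n, \I_\infty, \II_1, \II_\infty, \III\}$, and I would assume henceforth that $\cM$ is of pure type $X$.

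Next I would invoke the disintegration theorem to write $\cM = \int_\Omega^\oplus \cM_\omega \, d\mu(\omega)$ with each $\cM_\omega$ a factor of type $X$ for almost every $\omega$; since $\cH$ is separable, each fibre acts on a separable Hilbert space and is thus $\sigma$-finite. The heart of the argument is then a fixed-$U$ disintegration. For each open $U \subseteq \bC$, Sherman's theorem applied in $\bM_n(\cM)$ converts the hypothesis into $\chi_U(a)^{(n)} \MvN \chi_U(b)^{(n)}$. Using that spectral projections disintegrate fibrewise, so that $\chi_U(a) = \int_\Omega^\oplus \chi_U(a_\omega)\, d\mu(\omega)$, and that a decomposable partial isometry implementing a global Murray--von Neumann equivalence decomposes into fibrewise partial isometries, I would deduce $\chi_U(a_\omega)^{(n)} \MvN \chi_U(b_\omega)^{(n)}$ for almost every $\omega$. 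Lemma~\ref{factorcase}, applied in the $\sigma$-finite factor $\cM_\omega$, then yields $\chi_U(a_\omega) \MvN \chi_U(b_\omega)$ almost everywhere, and reassembling a measurable field of partial isometries over $\Omega$ gives $\chi_U(a) \MvN \chi_U(b)$ in $\cM$. Since $U$ was an arbitrary open set, Sherman's theorem applied in $\cM$ finishes the proof. A convenient feature of processing one $U$ at a time is that only a single $\mu$-conull set is needed at each stage, so no difficulty arises from the uncountability of the family of open sets.

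The main obstacle I anticipate is the bookkeeping around direct integrals, specifically the passage between global and fibrewise Murray--von Neumann equivalence. The ``global to fibrewise'' direction is benign: one simply reads off $v_\omega^* v_\omega$ and $v_\omega v_\omega^*$ from the disintegration of a decomposable partial isometry $v$. The reverse ``fibrewise to global'' direction, needed to assemble $\chi_U(a) \MvN \chi_U(b)$ from the almost-everywhere fibrewise equivalences, requires producing a measurable field $\omega \mapsto v_\omega$ of partial isometries, and this is where I would lean on the standard measurable-selection machinery for direct integrals developed in \cite{KadisonRingroseII, TakesakiII}. Checking that functional calculus commutes with disintegration, so that $\chi_U(a_\omega)$ really is the $\omega$-fibre of $\chi_U(a)$, is the other routine-but-essential technical point.
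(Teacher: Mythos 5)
Your proposal follows essentially the same route as the paper's proof: reduce by the type decomposition (using invariance of type under finite amplification), disintegrate over the centre into $\sigma$-finite factors, convert the hypothesis via Sherman's theorem into fibrewise Murray--von Neumann equivalence of amplified spectral projections, apply Lemma~\ref{factorcase} fibrewise, and reassemble before invoking Sherman's theorem once more in $\cM$. If anything, you are more explicit than the paper about the two technical points it glosses over -- that functional calculus commutes with disintegration, and that passing from fibrewise to global equivalence requires a measurable field of partial isometries -- so your write-up is a faithful (indeed slightly more careful) version of the same argument.
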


%%%%%%%%%%%%%%%%%%%%%%%%%%%%%%%%%%%%%%%%%%

As the center-valued trace $\text{ctr}: \cM \to Z(\cM)$ on a type $\II_1$ von Neumann algebra (separably represented or not) determines the Murray-von Neumann equivalence class of a projection, it is in fact true that Kaplansky's problem holds for normal elements in a general type $\II_1$ von Neumann algebra.

\begin{prop}
Let $\cM$ be a type $\II_1$ von Neumann algebra, $n \in \bN$, and $a,b \in \cM$ be normal. Then
\begin{equation}
a^{(n)} \simeq_a b^{(n)} \text{ if and only if } a \simeq_a b.
\end{equation}
\begin{proof}
By Theorems 8.4.3(iii) and 8.4.4 of \cite{KadisonRingroseII}, the centre-valued trace induces an order isomorphism
\begin{equation}
P(\cM)/\MvN \simeq Z(\cM)_{+,1}
\end{equation}
between the equivalence classes of projections and the set $Z(\cM)_{+,1}$ of positive contractions in the centre of $\cM$.
Taking matrix amplifications just induces the identity map
\begin{equation}
Z(\cM)_{+,1} \simeq P(\cM)/\MvN \to P(\bM_n(\cM))/\MvN \simeq Z(\cM)_{+,1},
\end{equation}
where we are identifying the centres of $\cM$ and $\bM_n(\cM)$.
As this map is injective, we have as above that, whenever $a,b \in \cM$ are normal elements and $U \subseteq \bC$ is an open subset, $\chi_U(a^{(n)}) \MvN \chi_U(b^{(n)})$ if and only if $\chi_U(a) \MvN \chi_U(b)$. Sherman's Theorem then allows us to conclude that $a^{(n)}$ and $b^{(n)}$ are approximately unitarily equivalent if and only if $a$ and $b$ are.
\end{proof}
\end{prop}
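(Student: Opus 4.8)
The plan is to mirror the structure of the preceding proposition but to replace the disintegration argument (which relied on separable representability) with the center-valued trace, which is available on \emph{any} type $\II_1$ von Neumann algebra. The forward implication is immediate: if $a \simeq_a b$, then the diagonal embedding $u \mapsto u^{(n)} = 1_{\bM_n} \otimes u$ sends approximating unitaries in $U(\cM)$ to approximating unitaries in $U(\bM_n(\cM))$, since $(u^{(n)})^* a^{(n)} u^{(n)} = (u^* a u)^{(n)}$, whence $a^{(n)} \simeq_a b^{(n)}$. For the converse, I would first invoke Sherman's Theorem to recast $a^{(n)} \simeq_a b^{(n)}$ as the statement that $\chi_U(a^{(n)}) \MvN \chi_U(b^{(n)})$ for every open $U \subseteq \bC$, together with the elementary observation that spectral projections commute with amplification, i.e.\ $\chi_U(a^{(n)}) = \chi_U(a)^{(n)}$.

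The key tool is that, on a type $\II_1$ von Neumann algebra, the center-valued trace $\text{ctr}: \cM \to Z(\cM)$ is a \emph{complete} invariant for Murray-von Neumann equivalence of projections: for projections $p,q \in \cM$ one has $p \MvN q$ if and only if $\text{ctr}(p) = \text{ctr}(q)$, and the assignment $[p] \mapsto \text{ctr}(p)$ is an order isomorphism $P(\cM)/\MvN \simeq Z(\cM)_{+,1}$ onto the positive contractions in the centre. This is precisely the content of Theorems 8.4.3(iii) and 8.4.4 of \cite{KadisonRingroseII}, and crucially it holds whether or not $\cM$ is separably represented.

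Next I would analyze the effect of amplification. Since the centre of $\bM_n$ consists of scalars, one has a canonical identification $Z(\bM_n(\cM)) = \bC 1_{\bM_n} \otimes Z(\cM) \simeq Z(\cM)$. The center-valued trace on $\bM_n(\cM)$ is $\text{ctr}_n = \frac{1}{n}\tr_n \otimes \text{ctr}$, and evaluating on an amplified projection gives $\text{ctr}_n(p^{(n)}) = \text{ctr}_n(1_{\bM_n} \otimes p) = \text{ctr}(p)$ under this identification. Hence the amplification map $[p] \mapsto [p^{(n)}]$ is the identity on $Z(\cM)_{+,1}$, and in particular injective. Combining these facts, $\chi_U(a)^{(n)} \MvN \chi_U(b)^{(n)}$ if and only if $\chi_U(a) \MvN \chi_U(b)$ for each open $U$, and a final application of Sherman's Theorem converts the latter back into $a \simeq_a b$.

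I expect the only genuine obstacle to be a matter of bookkeeping rather than a deep difficulty: one must confirm that the center-valued trace is indeed a complete invariant in the (possibly non-separable) type $\II_1$ setting, and verify that the identification of centres intertwines $\text{ctr}$ with $\text{ctr}_n$ correctly so that amplification acts as the identity on $Z(\cM)_{+,1}$. Once these normalizations are pinned down, the argument is formally identical to the factor and separably-represented cases, with the single invariant $\text{ctr}$ playing the role that the family of Murray-von Neumann dimensions coming from disintegration played before.
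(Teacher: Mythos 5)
Your proposal is correct and follows essentially the same route as the paper's own proof: both reduce to Sherman's Theorem and then use the Kadison--Ringrose order isomorphism $P(\cM)/\MvN \simeq Z(\cM)_{+,1}$ via the centre-valued trace, observing that amplification induces the identity (hence an injective) map on $Z(\cM)_{+,1}$ under the canonical identification $Z(\bM_n(\cM)) \simeq Z(\cM)$. Your explicit verification that $\mathrm{ctr}_n = \frac{1}{n}\tr_n \otimes \mathrm{ctr}$ sends $p^{(n)}$ to $\mathrm{ctr}(p)$ is exactly the bookkeeping the paper leaves implicit.
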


%%%%%%%%%%%%%%%%%%%%%%%%%%%%%%%%%%%%%%%%%%
%%%%%%%%%%%%%%%%%%%%%%%%%%%%%%%%%%%%%%%%%%
% SECTION FOUR
%%%%%%%%%%%%%%%%%%%%%%%%%%%%%%%%%%%%%%%%%%
%%%%%%%%%%%%%%%%%%%%%%%%%%%%%%%%%%%%%%%%%%

\vspace {1 cm}

\section{Arbitrary elements in UHF-stable C*-algebras}\label{sec: arbitrary elements in uhf-stable}

\subsection{Kaplansky's problem for $\bM_{n^{\infty}}$-stable C*-algebras}

%%%%%%%%%%%%%%%%%%%%%%%%%%%%%%%%%%%%%%%%%%

We now investigate the following interesting phenomenon:  when $\bA$ is a unital separable C*-algebra which is \emph{UHF-stable} -- that is, $\bA \otimes \bB \simeq \bA$ for some infinite-dimensional UHF algebra $\bB$ -- the distance between unitary orbits of \emph{any} two elements $a,b \in \bA$ is equal to the distance between the unitary orbits of $a^{(m)}$ and $b^{(m)}$ whenever $m^\infty$ divides the supernatural number of $\bB$.
To prove this, we will use techniques from the theory of strongly self-absorbing C*-algebras.

%%%%%%%%%%%%%%%%%%%%%%%%%%%%%%%%%%%%%%%%%%

\begin{defn} \cite{TomsWinter07}
A unital separable C*-algebra $\cD$ is \textbf{strongly self-absorbing} if $\cD \not\simeq \bC$ and there is an isomorphism $\cD \to \cD \otimes \cD$ which is approximately unitarily equivalent to the first factor embedding $d \mapsto d \otimes 1_\cD$.
\end{defn}

%%%%%%%%%%%%%%%%%%%%%%%%%%%%%%%%%%%%%%%%%%

All known strongly self-absorbing C*-algebras are: the Jiang-Su algebra $\cZ$ \cite{JiangSu99}, the Cuntz algebras $\cO_2$ and $\cO_{\infty}$ \cite{Cuntz77}, UHF algebras of infinite type (i.e., those whose supernatural numbers have an infinite power whenever a prime appears), and $\cO_{\infty}$ tensor a UHF algebra of infinite type. Moreover, these are in fact all of them if the UCT problem holds (the problem being: whether or not all unital, separable, simple, nuclear C*-algebras satisfy the UCT). Such C*-algebras have approximately inner tensor flip, are nuclear, simple and have at most one tracial state.

We note that as these C*-algebras have approximately inner flip, we can replace the condition that $d \mapsto d \otimes 1_\cD$ is approximately unitarily equivalent to an isomorphism with the condition that $d \mapsto 1_\cD \otimes d$ is.

%%%%%%%%%%%%%%%%%%%%%%%%%%%%%%%%%%%%%%%%%%

\begin{lemma}
Let $\bA$ be a unital, separable C*-algebra. Suppose that $\varphi \in \End(\bA)$ is approximately unitarily equivalent to an automorphism $\psi \in \Aut(\bA)$. Then
\begin{equation}
\dist(U(a),U(b)) = \dist(U(\varphi(a)),U(\varphi(b)))
\end{equation}
for any $a,b \in \bA$.
\begin{proof}
As $\varphi$ and $\psi$ are approximately unitarily equivalent, we have that
\begin{equation}
\dist(U(\psi(a)),U(\varphi(a))) = \dist(U(\psi(b)),U(\varphi(b))) = 0.
\end{equation}
Consequently
\begin{equation}
\begin{split}
\dist(U(a),U(b)) &= \dist(U(\psi(a)),U(\psi(b)))\\
%&= \dist(U(\psi(a)),U(\varphi(b))) \\
&= \dist(U(\varphi(a)),U(\varphi(b))).
\end{split}
\end{equation}
\end{proof}
\end{lemma}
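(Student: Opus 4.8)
The plan is to reduce everything to two elementary properties of the quantity $d(x,y) := \dist(U(x),U(y))$ on $\bA$: its invariance under automorphisms, and a triangle inequality. The argument then factors the desired equality through the intermediate map $\psi$, proving
\[
\dist(U(a),U(b)) = \dist(U(\psi(a)),U(\psi(b))) = \dist(U(\varphi(a)),U(\varphi(b))).
\]

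First I would observe that since $\psi \in \Aut(\bA)$ is a $*$-automorphism, it is an isometry carrying $U(\bA)$ bijectively onto itself, so that $U(\psi(a)) = \psi(U(a))$ for every $a \in \bA$. As $\psi$ preserves the norm, taking the infimum over orbits gives immediately
\[
\dist(U(\psi(a)),U(\psi(b))) = \dist(U(a),U(b)).
\]
This is precisely the point at which the hypothesis that $\psi$ is an \emph{automorphism}, rather than merely an endomorphism, is used.

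Next I would exploit $\varphi \simeq_a \psi$. By definition there is a sequence $(u_n)$ of unitaries with $u_n^*\varphi(x)u_n \to \psi(x)$ for every $x \in \bA$; specializing to $x = a$ and $x = b$ shows $\psi(a) \in \overline{U(\varphi(a))}$ and $\psi(b) \in \overline{U(\varphi(b))}$. By the characterisation recorded in the preliminaries—that $x \simeq_a y$ if and only if $\dist(U(x),U(y)) = 0$—we obtain
\[
\dist(U(\psi(a)),U(\varphi(a))) = \dist(U(\psi(b)),U(\varphi(b))) = 0.
\]

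Finally I would invoke the triangle inequality for $d$ to pass between $\psi$ and $\varphi$, yielding both inequalities and hence the stated equality. The only step warranting genuine care is the triangle inequality itself: given unitaries witnessing $d(a,b)$ and $d(b,c)$ to within $\ee$, the two representatives of $U(b)$ that appear must first be aligned by a further (isometric) conjugation before the ordinary triangle inequality for the norm of $\bA$ can be applied; this is routine. I anticipate no serious obstacle. The one thing to watch is that $\varphi$ is never used as a surjection—it enters only through the pointwise vanishing $d(\varphi(x),\psi(x)) = 0$—so the fact that $\varphi$ is merely an endomorphism causes no difficulty.
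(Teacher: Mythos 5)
Your proposal is correct and follows essentially the same route as the paper's proof: invariance of $\dist(U(\cdot),U(\cdot))$ under the automorphism $\psi$, vanishing of $\dist(U(\psi(x)),U(\varphi(x)))$ for $x=a,b$ from the approximate unitary equivalence $\varphi \simeq_a \psi$, and the triangle inequality for distances between unitary orbits to pass from $\psi$ to $\varphi$. The only difference is that you spell out the steps (notably the unitary-conjugation alignment needed for the triangle inequality) that the paper leaves implicit in its ``Consequently.''
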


%By \cite[Corollary 1.12]{TomsWinter07}, if $A$ is any unital separable C*-algebra, then any two endomorphisms $\cD \to A \otimes \cD$ are approximately unitarily equivalent.

%%%%%%%%%%%%%%%%%%%%%%%%%%%%%%%%%%%%%%%%%%

\begin{prop}\label{prop: Mn-stable distance}
Let $n \in \bN$ and $\bA$ be a unital, separable C*-algebra such that $\bM_{n^{\infty}} \otimes \bA \simeq \bA$.
Then
\begin{equation}
\dist(U(a),U(b)) = \dist(U(a^{(n)}),U(b^{(n)}))
\end{equation}
for all $a,b \in \bA$.
\begin{proof}
Let us identify $\bA$ with $\bM_{n^{\infty}} \otimes \bA$ and note that up to unitary equivalence in $\bM_n(\bA)$, the map $a \mapsto a^{(n)}$ can be identified with the map $\varphi: \bM_{n^{\infty}} \otimes \bA  \to \bM_n \otimes \bM_{n^{\infty}} \otimes \bA$ given by
\begin{equation}\label{eq:first factor embedding}
\varphi(x) = 1_{\bM_n} \otimes x = x^{(n)}.
\end{equation}
This can be identified with the map $\varphi = \varphi_0 \otimes \id_\bA$ where $\varphi_0: \bM_{n^{\infty}} \to \bM_n \otimes \bM_{n^{\infty}}$ is given by the same formula (\ref{eq:first factor embedding}).
The C*-algebra $\bM_{n^{\infty}}$ is clearly isomorphic to $\bM_n \otimes \bM_{n^{\infty}}$, and, as this C*-algebra is strongly self-absorbing, \cite[Corollary 1.12]{TomsWinter07} gives that $\varphi_0$ is approximately unitarily equivalent to an isomorphism $\psi: \bM_{n^{\infty}} \simeq \bM_n \otimes \bM_{n^{\infty}}$.
Thus $\varphi = \varphi_0 \otimes \id_\bA$ is approximately unitarily equivalent to an automorphism $\psi \otimes \id_\bA$.
The result follows from the above lemma.
\end{proof}
\end{prop}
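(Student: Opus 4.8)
The plan is to realize the multiplicity-$n$ amplification, after absorbing a copy of $\bM_{n^\infty}$ into $\bA$, as a unital tensor-factor embedding of a strongly self-absorbing algebra, and then to invoke the preceding lemma together with the structure theory of such algebras. The whole point is to reduce the statement to the situation covered by the lemma: an endomorphism that is approximately unitarily equivalent to an automorphism preserves distances between unitary orbits.

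First I would fix an isomorphism $\bA \simeq \bM_{n^\infty} \otimes \bA$ and use it to identify the two algebras throughout. Under this identification there is a canonical identification $\bM_n(\bA) = \bM_n \otimes \bA = \bM_n \otimes \bM_{n^\infty} \otimes \bA$, and the amplification $a^{(n)} = 1_{\bM_n} \otimes a$ is literally the image of $a$ under the map $\varphi := \varphi_0 \otimes \id_\bA$, where $\varphi_0 : \bM_{n^\infty} \to \bM_n \otimes \bM_{n^\infty}$ is the unital embedding $x \mapsto 1_{\bM_n} \otimes x$. Consequently $\dist(U(a^{(n)}), U(b^{(n)})) = \dist(U(\varphi(a)), U(\varphi(b)))$, so it suffices to compare this last quantity with $\dist(U(a), U(b))$ via the lemma.

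The key step is to show that $\varphi$ is approximately unitarily equivalent to an automorphism. I would observe that $\bM_n \otimes \bM_{n^\infty} \simeq \bM_{n^\infty}$, since a finite power of $n$ is absorbed into an infinite one; thus both the source and target of $\varphi_0$ are copies of the strongly self-absorbing algebra $\bM_{n^\infty}$. Invoking the uniqueness statement for unital $*$-homomorphisms out of a strongly self-absorbing C*-algebra (\cite[Corollary 1.12]{TomsWinter07}) then gives that $\varphi_0$ is approximately unitarily equivalent to an isomorphism $\psi_0 : \bM_{n^\infty} \to \bM_n \otimes \bM_{n^\infty}$. Tensoring with $\id_\bA$ (taking the approximating unitaries to be of the form $u \otimes 1_\bA$) yields $\varphi = \varphi_0 \otimes \id_\bA \simeq_a \psi_0 \otimes \id_\bA =: \psi$, and $\psi$ is an automorphism once all three algebras are identified with $\bA$. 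The preceding lemma applied to $\varphi$ and $\psi$ then gives $\dist(U(a), U(b)) = \dist(U(\varphi(a)), U(\varphi(b)))$, completing the argument.

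I expect the main obstacle to be organizational rather than deep: carefully choosing and tracking the identifications $\bA \simeq \bM_{n^\infty} \otimes \bA \simeq \bM_n \otimes \bM_{n^\infty} \otimes \bA$ so that (i) the amplification map genuinely becomes the factor embedding $\varphi_0 \otimes \id_\bA$, and (ii) the maps $\varphi$ and $\psi$ are literally an endomorphism and an automorphism of the single algebra $\bA$, so that the hypotheses of the lemma are met verbatim. The one genuinely substantive input is the approximate-unitary-equivalence statement for maps out of a strongly self-absorbing C*-algebra; everything else reduces to this bookkeeping and to the elementary fact that approximate unitary equivalence is preserved under tensoring with $\id_\bA$.
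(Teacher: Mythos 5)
Your proposal is correct and follows essentially the same route as the paper's proof: identify $\bA$ with $\bM_{n^\infty}\otimes\bA$, realize the amplification as $\varphi_0\otimes\id_\bA$ for the first-factor embedding $\varphi_0:\bM_{n^\infty}\to\bM_n\otimes\bM_{n^\infty}$, apply \cite[Corollary 1.12]{TomsWinter07} to replace $\varphi_0$ by an isomorphism up to approximate unitary equivalence, and conclude via the preceding lemma. Your added remarks on tracking the identifications and taking approximating unitaries of the form $u\otimes 1_\bA$ are exactly the bookkeeping the paper leaves implicit.
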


%%%%%%%%%%%%%%%%%%%%%%%%%%%%%%%%%%%%%%%%%%

As a consequence, we recover a positive solution to Kaplansky's problem for $m$th matrix amplifications in the setting of unital, separable, $\bM_{n^{\infty}}$-stable C*-algebras, whenever $m$ divides $n$.

\begin{cor} \label{cor:4.4}
Let $n \in \bN$ and suppose that $\bA$ is a unital separable C*-algebra such that $\bA \simeq \bM_{n^{\infty}} \otimes \bA$. Then, for $a,b \in \bA$ and $m \in \bN$ such that $m | n$, we have
\begin{equation}
a^{(m)} \simeq_a b^{(m)} \iff a \simeq_a b.
\end{equation}
\begin{proof}
If $m | n$, then $\bM_{m^{\infty}} \otimes \bM_{n^{\infty}} \simeq \bM_{n^{\infty}}$, and thus $\bA$ absorbs $\bM_{m^{\infty}}$ if $\bA$ absorbs $\bM_{n^{\infty}}$. Now the result follows from the above.
\end{proof}
\end{cor}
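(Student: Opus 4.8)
The plan is to reduce this corollary directly to Proposition~\ref{prop: Mn-stable distance} by observing that $\bM_{n^{\infty}}$-stability of $\bA$ automatically forces $\bA$ to be $\bM_{m^{\infty}}$-stable whenever $m \mid n$. Once that reduction is in place, the statement about approximate unitary equivalence is immediate from the distance formula already established.

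First I would record the relevant supernatural-number arithmetic. Writing $n = \prod_p p^{a_p}$, the UHF algebra $\bM_{n^{\infty}}$ corresponds to the supernatural number $\prod_{p \mid n} p^{\infty}$, and likewise $\bM_{m^{\infty}}$ corresponds to $\prod_{p \mid m} p^{\infty}$. Since $m \mid n$ forces every prime dividing $m$ to be among those dividing $n$, the supernatural number attached to the tensor product $\bM_{m^{\infty}} \otimes \bM_{n^{\infty}}$ has support equal to the set of primes dividing $n$, each occurring with infinite multiplicity (already present in $n^{\infty}$). Hence $\bM_{m^{\infty}} \otimes \bM_{n^{\infty}} \simeq \bM_{n^{\infty}}$.

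Tensoring both sides of this isomorphism with $\bA$ and invoking the hypothesis $\bA \simeq \bM_{n^{\infty}} \otimes \bA$, I would then obtain
\[
\bM_{m^{\infty}} \otimes \bA \simeq \bM_{m^{\infty}} \otimes \bM_{n^{\infty}} \otimes \bA \simeq \bM_{n^{\infty}} \otimes \bA \simeq \bA,
\]
so that $\bA$ is $\bM_{m^{\infty}}$-stable. Applying Proposition~\ref{prop: Mn-stable distance} with $m$ in place of $n$ then yields $\dist(U(a),U(b)) = \dist(U(a^{(m)}),U(b^{(m)}))$ for all $a,b \in \bA$.

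Finally I would translate this equality of distances into the desired equivalence. Since the excerpt records that $x \simeq_a y$ if and only if $\dist(U(x),U(y)) = 0$, the equality of the two distances gives $a^{(m)} \simeq_a b^{(m)}$ iff $\dist(U(a^{(m)}),U(b^{(m)})) = 0$ iff $\dist(U(a),U(b)) = 0$ iff $a \simeq_a b$. There is no serious obstacle here: all of the analytic content resides in Proposition~\ref{prop: Mn-stable distance} (ultimately in the Toms--Winter uniqueness theorem for embeddings of strongly self-absorbing algebras), and this corollary amounts to the elementary observation that $\bM_{m^{\infty}}$-absorption is inherited from $\bM_{n^{\infty}}$-absorption. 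The only point requiring a moment's care is the supernatural-number bookkeeping in the first step.
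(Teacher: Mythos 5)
Your proposal is correct and follows exactly the paper's own argument: the same supernatural-number observation that $m \mid n$ gives $\bM_{m^{\infty}} \otimes \bM_{n^{\infty}} \simeq \bM_{n^{\infty}}$, hence $\bM_{m^{\infty}}$-stability of $\bA$, followed by an application of Proposition~\ref{prop: Mn-stable distance} with $m$ in place of $n$. You have merely spelled out the tensor-product bookkeeping and the distance-to-equivalence translation that the paper leaves implicit.
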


%%%%%%%%%%%%%%%%%%%%%%%%%%%%%%%%%%%%%%%%%%

Recall that $\cQ$ denotes the \emph{universal} UHF C*-algebra, that is: the UHF C*-algebra whose supernatural number is divisible by any integer.   From above we deduce that  $\cQ$-stable C*-algebras will have a positive solution for any $n \in \bN$. This can be viewed as an analogue of a positive answer to Kaplansky's question in the case of infinite abelian groups where the groups $G$ and $H$ are divisible (see \cite[Exercise 9]{Kaplansky54}).

\begin{cor}\label{cor:Q-stable algebras satisfy KP}
Let $\bA$ be a unital, separable, $\cQ$-stable C*-algebra. Then, for $a,b \in \bA$, the following are equivalent.
\begin{enumerate}
\item $a \simeq_a b$ in $\bA$;
\item $a^{(n)} \simeq_a b^{(n)}$ in $\bM_n(\bA)$ for any $n \geq 1$;
\item $a^{(n)} \simeq_a  b^{(n)}$ in $\bM_n(\bA)$ for some $n \geq 1$.
\end{enumerate}
\end{cor}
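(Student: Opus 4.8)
The plan is to reduce the statement entirely to Proposition~\ref{prop: Mn-stable distance} by first showing that a $\cQ$-stable algebra absorbs $\bM_{n^\infty}$ for \emph{every} $n$, and then chasing the three implications.

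First I would establish the tensorial bookkeeping. Since $\cQ$ is the universal UHF algebra, its supernatural number is divisible by every integer; consequently the supernatural number of $\cQ \otimes \bM_{n^\infty}$ is still divisible by every integer, and the classification of UHF algebras yields $\cQ \otimes \bM_{n^\infty} \simeq \cQ$ for each $n \ge 1$. As $\cQ$ and $\bM_{n^\infty}$ are nuclear there is no tensor-product ambiguity, so combining this with $\cQ$-stability and the commutativity and associativity of the minimal tensor product gives, for every $n \ge 1$,
\[
\bM_{n^\infty} \otimes \bA \simeq \bM_{n^\infty} \otimes (\bA \otimes \cQ) \simeq \bA \otimes (\bM_{n^\infty} \otimes \cQ) \simeq \bA \otimes \cQ \simeq \bA.
\]
Thus $\bA$ is $\bM_{n^\infty}$-stable for all $n$, and Proposition~\ref{prop: Mn-stable distance} (equivalently Corollary~\ref{cor:4.4} with the choice $n=m$) applies at every amplification multiplicity.

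With this in hand the three implications are short. The implication $(1) \Rightarrow (2)$ is immediate and requires no hypothesis on $\bA$: if $u_k^* a u_k \to b$ with $u_k \in U(\bA)$, then conjugating by the amplified unitaries $u_k^{(n)}$ shows $a^{(n)} \simeq_a b^{(n)}$ for every $n$. The implication $(2) \Rightarrow (3)$ is trivial. For the substantive direction $(3) \Rightarrow (1)$, I would suppose $a^{(n)} \simeq_a b^{(n)}$ for some fixed $n \ge 1$; by the distance characterization of approximate unitary equivalence recalled in Section~\ref{sec:prelims}, this says $\dist(U(a^{(n)}), U(b^{(n)})) = 0$. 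Since $\bA$ is $\bM_{n^\infty}$-stable for this particular $n$, Proposition~\ref{prop: Mn-stable distance} gives $\dist(U(a), U(b)) = \dist(U(a^{(n)}), U(b^{(n)})) = 0$, i.e. $a \simeq_a b$.

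The only content beyond Proposition~\ref{prop: Mn-stable distance} is the $\bM_{n^\infty}$-stability step, so I do not expect a genuine obstacle here. The single point that must be stated with care is that $\cQ$-stability is invoked for the \emph{specific} $n$ appearing in the hypothesis of $(3)$ — which is exactly what the absorption $\bM_{n^\infty} \otimes \bA \simeq \bA$ for all $n$ supplies — rather than for one a priori fixed value.
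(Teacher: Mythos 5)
Your proposal is correct and follows exactly the route the paper intends: the paper deduces this corollary from Corollary~\ref{cor:4.4} (hence from Proposition~\ref{prop: Mn-stable distance}) via the observation that $\cQ$ absorbs $\bM_{n^\infty}$ for every $n$, which is precisely your absorption computation $\bM_{n^\infty} \otimes \bA \simeq \bA \otimes (\bM_{n^\infty} \otimes \cQ) \simeq \bA$. Your write-up merely makes explicit the trivial implications $(1) \Rightarrow (2) \Rightarrow (3)$ and the application of the distance formula that the paper leaves to the reader.
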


%%%%%%%%%%%%%%%%%%%%%%%%%%%%%%%%%%%%%%%%%%

In particular, the above holds for the Cuntz algebra $\cO_2$ \cite{Cuntz77}.

\begin{remark}
In fact, by \cite[Theorem 2.2]{DadarlatWinter09}, any two embeddings of a strongly self-absorbing C*-algebra $\cD$ into $\cD \otimes \bA$ are (strongly) asymptotically unitarily equivalent. Thus one can argue in the same way to conclude that if $\bA \simeq \bM_{n^{\infty}} \otimes \bA$ is unital and separable, $a,b \in \bA$, and $a^{(n)}$ and $b^{(n)}$ are asymptotically unitarily equivalent in $\bM_n(\bA)$, then so are $a$ and $b$ in $\bA$.
\end{remark}

%%%%%%%%%%%%%%%%%%%%%%%%%%%%%%%%%%%%%%%%%%

\begin{remark}
$\II_1$ factors are additionally equipped with the $\|\cdot\|_2$-norm, explicitly given by $\|a\|_2 = \tau(a^*a)^\frac{1}{2}$, where $\tau$ is the unique faithful normal tracial state. In this setting, when $\cM$ is separable (with respect to the $\|\cdot\|_2$-norm) and McDuff -- that is, $\cM \simeq \cM {\ov} \cR$, where $\cR$ is the hyperfinite $\II_1$ factor -- the strongly self-absorbing results have analogues and we will indeed get that
\begin{equation}
\dist_{\|\cdot\|_2}(U(a^{(n)}),U(b^{(n)})) = \dist_{\|\cdot\|_2}(U(a),U(b))
\end{equation}
for arbitrary elements $a,b \in \cM$.
This is because any two embeddings of $\cR$ into $\cM \ov \cR \simeq \cM$ will be approximately unitarily equivalent in the point-$\|\cdot\|_2$-norm topology, and then one can argue as in Proposition \ref{prop: Mn-stable distance}.\footnote{In fact, it is true that there is a unique embedding of $\cR$ into any separable $\II_1$ factor, up to $\|\cdot\|_2$-approximate unitary equivalence. However to replicate the argument in Proposition \ref{prop: Mn-stable distance}, we still require $\cR \ov \cM \simeq \cM$.}
\end{remark}

%%%%%%%%%%%%%%%%%%%%%%%%%%%%%%%%%%%%%%%%%%

\subsection{A counter-example in the CAR algebra}
For the CAR algebra $\bM_{2^\infty}$, it follows from Corollary~\ref{cor:4.4} that $a^{(2)} \simeq_a b^{(2)}$ implies that $a \simeq_a b$, and more generally, for any $n \ge 1$, $a^{(2^n)} \simeq_a b^{(2^n)}$ implies that $a \simeq_a  b$.   It is therefore natural to ask whether or not $a^{(3)} \simeq_a b^{(3)}$ implies that $a \simeq_a b$.

Recall that, for $n \in \bN$, the (\emph{unital}) \emph{dimension drop algebra} $\bI_n$ is the C*-algebra
\begin{equation}
\bI_n := \{f \in C([0,1],\bM_n) \mid f(0),f(1) \in \bC\cdot I_n\}
\end{equation}
of functions from $[0,1]$ into $\bM_n$ which take scalar values at the endpoints.
The $K$-theory of $\bI_n$ is as follows: $K_0(\bI_n) = \bZ$ and $K_1(\bI_n) = \bZ_n$, the cyclic group of order $n$.
This is an interesting C*-algebra for various reasons relating to $K$-theory, and in particular mod-$p$ $K$-theory. One feature of this algebra is the fact that evaluation at the different endpoints correspond to different $KK$-classes, see \cite{DadarlatLoring96Modp}.

%%%%%%%%%%%%%%%%%%%%%%%%%%%%%%%%%%%%%%%%%%

\begin{theorem} \label{thm:thm4.8}
There are two embeddings $\Phi,\Psi: \bI_3 \to \bM_{2^{\infty}}$ such that
\begin{enumerate}
\item $\Phi^{(3)} \simeq_a \Psi^{(3)}$ in $\bM_3(\bM_{2^{\infty}})$;
\item $\Phi \not\simeq_a \Psi$ in $\bM_{2^{\infty}}$.
\end{enumerate}
\end{theorem}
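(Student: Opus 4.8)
The plan is to treat $\Phi$ and $\Psi$ as morphisms classified by a $K$-theoretic invariant, exploiting the torsion group $K_1(\bI_3)\cong\bZ/3$. Since $\bI_3$ is nuclear and satisfies the UCT and $\bM_{2^\infty}$ is a unital, simple, monotracial, classifiable UHF algebra, I would invoke the classification of morphisms into classifiable targets: two unital embeddings $\bI_3\to\bM_{2^\infty}$ are approximately unitarily equivalent if and only if they induce the same class in $KL(\bI_3,\bM_{2^\infty})$ (equivalently, the same map on total $K$-theory $\underline{K}$, via the universal multicoefficient theorem of Dadarlat--Loring) and the same pullback of the trace. I would first record this uniqueness statement together with its companion existence statement, that every admissible invariant is realised by a unital embedding.

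Next I would compute $KK(\bI_3,\bM_{2^\infty})$ through the UCT. With $K_0(\bI_3)=\bZ$, $K_1(\bI_3)\cong\bZ/3$, $K_0(\bM_{2^\infty})=\bZ[1/2]$ and $K_1(\bM_{2^\infty})=0$, the UCT sequence reads $0\to \mathrm{Ext}(\bZ/3,\bZ[1/2])\to KK(\bI_3,\bM_{2^\infty})\to \mathrm{Hom}(\bZ,\bZ[1/2])\to 0$, and since $2$ is invertible modulo $3$ one gets $\mathrm{Ext}(\bZ/3,\bZ[1/2])=\bZ[1/2]/3\bZ[1/2]\cong\bZ/3$. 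The $\mathrm{Hom}$-part is forced to be the unital map $[1]\mapsto 1$ for any unital embedding, and $K_1(\bM_{2^\infty})=0$ eliminates the remaining freedom, so the only latitude is the $\bZ/3$ arising from $\mathrm{Ext}$; concretely it is the component $\mathrm{Tor}(K_1(\bI_3),\bZ/3)\to K_0(\bM_{2^\infty};\bZ/3)\cong\bZ/3$ of the $\underline{K}$-map. Using the existence theorem I would fix a full-support (hence faithful) probability measure $\mu$ on $[0,1]$ as common trace data and realise unital embeddings $\Phi,\Psi$ having this same trace pullback and the same ordinary $K$-theory maps, but whose $KK$-classes differ by a generator of $\mathrm{Ext}(\bZ/3,\bZ[1/2])\cong\bZ/3$. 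Faithfulness of $\mu$ makes both maps injective, and since the torsion part is invisible to traces, both invariants are admissible with the common trace $\mu$.

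For (2), I would note that $\mathrm{Pext}(\bZ/3,\bZ[1/2])=0$, because a pure subgroup of finite index splits off as a direct summand; hence the entire $\mathrm{Ext}$-part survives in $KL$, so $\Phi$ and $\Psi$ have distinct $KL$-classes, and as the $KL$-class (equivalently the $\underline{K}$-map) is an invariant of approximate unitary equivalence, $\Phi\not\simeq_a\Psi$. For (1), I would analyse amplification: $\Phi\mapsto\Phi^{(3)}$ is $\Phi$ followed by the inflation $\sigma\colon\bM_{2^\infty}\to\bM_3\otimes\bM_{2^\infty}$, $b\mapsto 1_{\bM_3}\otimes b$, which acts on $K_0$ as multiplication by $3$. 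Thus $[\Phi^{(3)}]-[\Psi^{(3)}]=\sigma_*([\Phi]-[\Psi])$, and on the $3$-torsion summand $\mathrm{Ext}(\bZ/3,\bZ[1/2])\cong\bZ[1/2]/3\bZ[1/2]$ the map $\sigma_*$ is induced by multiplication by $3$ on coefficients, hence is zero; therefore $[\Phi^{(3)}]=[\Psi^{(3)}]$. Since the normalised trace pullbacks of $\Phi^{(3)}$ and $\Psi^{(3)}$ both equal $\mu$, the uniqueness theorem applied to the UHF target $\bM_3(\bM_{2^\infty})=\bM_{3\cdot 2^\infty}$ gives $\Phi^{(3)}\simeq_a\Psi^{(3)}$.

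The main obstacle I expect is pinning down the correct classification-of-morphisms result, in both its uniqueness and existence forms, in a version applicable to the non-simple domain $\bI_3$ with invariant $(\underline{K}\text{-map},\ \text{trace})$, and then verifying carefully that amplification by $3$ annihilates exactly the $3$-torsion $\mathrm{Ext}$-class distinguishing $\Phi$ and $\Psi$ while leaving the ordinary $K$-theory and the trace pullback unchanged. The $K$-theory bookkeeping, namely the UCT computation, the identification of the $\mathrm{Ext}$-summand with $\bZ[1/2]/3\bZ[1/2]$, and the vanishing of $\mathrm{Pext}(\bZ/3,\bZ[1/2])$, is the concrete engine driving both conclusions.
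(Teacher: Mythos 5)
Your proposal is correct, but it takes a genuinely different route from the paper. The paper proves the theorem entirely by hand: it reuses the explicit embeddings $\Phi,\Psi\colon \bI_3\to\bM_{2^\infty}$ of \cite[Theorem 3.5]{MarcouxZhang21}, built as inductive limits of finite-dimensional maps $\varphi_n,\psi_n$ given by point evaluations which differ only in whether the scalar endpoint value $\underline{f}(0)$ is placed at the start or $\underline{f}(1)$ at the end; item (2) is quoted from that earlier paper, and item (1) is proved by observing that after $3$-amplification the scalar endpoint blocks become the full matrices $f(0)$ and $f(1)$, so that $\varphi_n^{(3)}$ and $\psi_n^{(3)}$ are unitarily equivalent, via permutation unitaries independent of $f$, to two direct sums of point evaluations that are shifts of one another; uniform continuity then gives $\|\varphi_n'(f)-\psi_n'(f)\|\to 0$, hence $\Phi^{(3)}\simeq_a\Psi^{(3)}$. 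You replace all of this with classification machinery plus UCT bookkeeping, and your bookkeeping is right: the unital $KK$-classes form a coset of $\mathrm{Ext}(\bZ/3,\bZ[1/2])\cong\bZ/3$, the $\mathrm{Pext}$ term vanishes because $\bZ/3$ is finitely generated, and composition with $b\mapsto 1_{\bM_3}\otimes b$ acts on this $\mathrm{Ext}$ group as multiplication by $3$, i.e.\ as zero. Two points deserve more care in your write-up. First, the uniqueness statement you invoke (approximately unitarily equivalent iff equal $KL$-class and equal trace pullback) is not the literal form of the modern classification of morphisms, whose invariant for approximate unitary equivalence also includes the Hausdorffized unitary group $\overline{U/CU}$ (equivalently, additional compatibility data); you should note explicitly that for a UHF target this component is trivial -- since $K_1=0$ and the image of $K_0$ under the trace is dense in $\bR$ -- so the invariant genuinely collapses to $KL$ plus trace, and the same remark is needed for the amplified target $\bM_3(\bM_{2^\infty})\simeq\bM_{3\cdot 2^\infty}$. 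Second, you need an existence theorem valid for the non-simple domain $\bI_3$ producing unital monomorphisms with prescribed compatible invariant; such results are available (Lin's existence theorems, or \cite{CGSTW23}), and compatibility is automatic here because $K_0(\bI_3)=\bZ[1_{\bI_3}]$, while faithfulness of the chosen trace forces injectivity, as you say. As for what each approach buys: yours isolates the conceptual obstruction -- the order-three class in $\mathrm{Ext}(K_1(\bI_3),K_0(\bM_{2^\infty}))$, which is exactly annihilated by $3$-amplification -- and generalizes immediately (e.g.\ to $\bI_p\to\bM_{q^\infty}$ for coprime $p,q$); the paper's argument is elementary and self-contained, avoiding any appeal to classification or existence theorems, at the cost of obscuring why the number $3$ is the relevant multiplicity.
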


\begin{proof}
We use the embeddings $\Phi,\Psi:\bI_3 \into \bM_{2^{\infty}}$ constructed in \cite[Theorem 3.5]{MarcouxZhang21}, where it is shown that (2) holds yet $\tau(\Phi(f)) = \tau(\Psi(f))$ for all $f \in \bI_3$. We note that this trace condition follows from (1), as the trace is invariant under approximate unitary equivalence. Borrowing notation from the proof of that result, let us fix $(f_n) \subseteq \bI_3$ to be a dense sequence of the unit ball of $\bI_3$, and let $\ee_n = \frac{1}{2^n}$. As each $f_i$ is uniformly continuous, find $2 \leq d_n \in \bN$ such that for $1 \le i \le n$,
\begin{equation}
\|f_i(t) - f_i(s)\| < \ee_n
\end{equation}
whenever $|s - t| \leq \frac{1}{d_n}$. Without loss of generality, let us assume that the sequence $(d_n)$ is strictly increasing. Set
\begin{equation}
l_n = \frac{4^{d_n} - 1}{3},\ \ \  r_n = \frac{1}{l_n + 1} = \frac{3}{4^{d_n} + 2},\ \ \  m_n = 4^{d_{n+1} - d_n}
\end{equation}
and
\begin{equation}
t_j^{(n)} = j\cdot r_n, j=1,\dots,l_n.
\end{equation}
Define $\underline{f}(t)$ to be the (1,1) corner of $f(t)$, so that $\underline{f}(0)$ is just the scalar $\lambda \in \bC$ such that $f(0) = \lambda \cdot I_3$ and $\underline{f}(1)$ is just the scalar $\mu \in \bC$ such that $f(1) = \mu \cdot I_3$.
\begin{equation}
\varphi_n(f) := \underline{f}(0) \oplus f(t_1^{(n)}) \oplus \cdots \oplus f(t_{l_n}^{(n)})
\end{equation}
and
\begin{equation}
\psi_n(f) := f(t_1^{(n)}) \oplus \cdots \oplus f(t_{l_n}^{(n)}) \oplus \underline{f}(1).
\end{equation}
Letting $\alpha_n,\beta_n: \bM_{d_n} \to \bM_{d_{n+1}}$ be embeddings given by the diagonal embedding composed with conjugation by a unitary which takes elements of the form
\begin{equation}
I_{m_n} \otimes \begin{pmatrix}
\lambda \\ & a_1 \\ & & \ddots \\ & & & a_{l_n}
\end{pmatrix} \text{ and }
I_{m_n} \otimes \begin{pmatrix}
b_1 \\ &  \ddots \\ & &  b_{l_n} \\ & & & \mu
\end{pmatrix}
\end{equation}
to
\begin{equation}
\begin{pmatrix}
I_{m_n} \otimes \lambda \\ & I_{m_n} \otimes a_1\\ & & \ddots \\ & & & I_{m_n} \otimes a_{l_n}
\end{pmatrix}
\end{equation}
and
\begin{equation}
\begin{pmatrix}
I_{m_n} \otimes b_1 \\ &  \ddots \\ & & I_{m_n} \otimes  b_{l_n} \\ & & & I_{m_n} \otimes \mu
\end{pmatrix}
\end{equation}
respectively. This yields approximately commuting diagrams
\begin{equation}
\begin{CD}
\mathbb{I}_3 @>{\id}>> \mathbb{I}_3  @>{\id}>> \mathbb{I}_3 @>{\id}>> \cdots @>{}>>\mathbb{I}_3\\
@VV{\varphi_1}V @VV{\varphi_2}V @VV{\varphi_3}V@. @VV{\Phi}V\\
\bM_{4^{d_1}}@>{\alpha_1}>> \bM_{4^{d_2}} @>{\alpha_2}>> \bM_{{4^{d_3}}} @>{\alpha_3}>> \cdots @>{}>>\bM_{2^\infty}
\end{CD}
\end{equation}
and
\begin{equation}
\begin{CD}
\mathbb{I}_3 @>{\id}>> \mathbb{I}_3  @>{\id}>> \mathbb{I}_3 @>{\id}>> \cdots @>{}>>\mathbb{I}_3\\
@VV{\psi_1}V @VV{\psi_2}V @VV{\psi_3}V@. @VV{\Psi}V\\
\bM_{4^{d_1}}@>{\alpha_1}>> \bM_{4^{d_2}} @>{\alpha_2}>> \bM_{{4^{d_3}}} @>{\alpha_3}>> \cdots @>{}>>\bM_{2^\infty}
\end{CD}
\end{equation}
where $\Phi$ and $\Psi$ exist since $\sum_n \ee_n = 1$ by \cite[Theorem 1.10.14]{LinBook}.
It is also clear that $\Phi^{(3)}$ and $\Psi^{(3)}$ can be realized as the limits of $\varphi_n^{(3)}$ and $\psi_n^{(3)}$ respectively, as we further have the approximately commuting diagrams
\begin{equation}
\begin{CD}
\mathbb{I}_3 @>{\id}>> \mathbb{I}_3  @>{\id}>> \mathbb{I}_3 @>{\id}>> \cdots @>{}>>\mathbb{I}_3\\
@VV{1_3\otimes \varphi_1}V @VV{1_3\otimes\varphi_2}V @VV{1_3\otimes\varphi_3}V@. @VV{\Phi^{(3)}}V\\
\bM_3\otimes \bM_{4^{d_1}}@>{\id_{\bM_3}\otimes \alpha_1}>> \bM_3\otimes\bM_{4^{d_2}} @>{\id_{\bM_3}\otimes\alpha_2}>> \bM_3\otimes\bM_{{4^{d_3}}} @>{\id_{\bM_3}\otimes\alpha_3}>> \cdots @>{}>>\bM_3\otimes\bM_{2^\infty}
\end{CD}
\end{equation}
and
\begin{equation}
\begin{CD}
\mathbb{I}_3 @>{\id}>> \mathbb{I}_3  @>{\id}>> \mathbb{I}_3 @>{\id}>> \cdots @>{}>>\mathbb{I}_3\\
@VV{1_3\otimes \psi_1}V @VV{1_3\otimes\psi_2}V @VV{1_3\otimes\psi_3}V@. @VV{\Psi^{(3)}}V\\
\bM_3\otimes \bM_{4^{d_1}}@>{\id_{\bM_3}\otimes \beta_1}>> \bM_3\otimes\bM_{4^{d_2}} @>{\id_{\bM_3}\otimes\beta_2}>> \bM_3\otimes\bM_{{4^{d_3}}} @>{\id_{\bM_3}\otimes\beta_3}>> \cdots @>{}>>\bM_3\otimes\bM_{2^\infty}
\end{CD}
\end{equation}
However, $\varphi_n^{(3)}(f)$ is unitarily equivalent to
\begin{equation}
\varphi_n'(f) = f(0) \oplus f(t_1^{(n)})^{(3)} \oplus \cdots \oplus f(t_{l_n}^{(n)})^{(3)}
\end{equation}
and $\psi_n^{(3)}$ is unitarily equivalent to
\begin{equation}
\psi_n'(f) = f(t_1^{(n)})^{(3)} \oplus \cdots \oplus f(t_{l_n}^{(n)})^{(3)} \oplus f(1).
\end{equation}
Moreover, the unitaries implementing these equivalences do not depend on $f$. By the continuity of $f$, together with the way we chose our points $t_i^{(n)}$, we have that $\|\varphi_n'(f) - \psi_n'(f)\| \to 0$ as $n \to \infty$ because this norm is just
\begin{equation}
\max\{\|f(0) - f(t_1^{(n)})\|, \|f(t_1^{(n)}) - f(t_{2}^{(n)})\|, \dots, \|f(t_{l_{n-1}}^{(n)}) - f(t_{l_n}^{(n)})\|, \|f(t_{l_n}^{(n)}) - f(1)\|\},
\end{equation}
which tends to 0 as $n \to \infty$.
As such, we have that
\begin{equation}
\dist(U(\varphi_n^{(3)}(f)),U(\psi_n^{(3)}(f))) \to 0
\end{equation}
as $n \to \infty$, which in turn yields that $\Phi^{(3)}$ is approximately unitarily equivalent to $\Psi^{(3)}$ in $\bM_3 \otimes \bM_{2^{\infty}} = \bM_3(\bM_{2^{\infty}})$.
\end{proof}

%%%%%%%%%%%%%%%%%%%%%%%%%%%%%%%%%%%%%%%%%%

Since these elements were already known to satisfy Specht's condition from \cite{MarcouxZhang21}, this begs the question: do two elements in an arbitrary UHF algebra satisfy Specht's condition if and only if there is a suitable number $n$ for which their $n$-amplifications are approximately unitarily equivalent?  We have not yet been able to resolve this question.

Nonetheless, in the UHF setting, Question \ref{factor-pair-amplification} has a negative answer.
Indeed, it appears as though there need be no relationship in general between the distances between the unitary orbits of amplifications of a given pair $a, b \in \bA$ of different multiplicities.

%%%%%%%%%%%%%%%%%%%%%%%%%%%%%%%%%%%%%%%%%%

\begin{cor}\label{cor: CAR elements not satisfying 3-KP}
There are elements $a,b \in \bM_{2^{\infty}}$ and $\delta > 0$ such that
\begin{enumerate}
\item $\dist(U(a^{(2^k)}),U(b^{(2^k)})) = \delta$ for all $k \in \bN$;
\item $\dist(U(a^{(3k)}),U(b^{(3k)})) = 0$ for all $k \in \bN$.
\end{enumerate}
\begin{proof}
By \cite[Proposition 3.7]{MarcouxZhang21}, $\bM_2(\bI_3) = \bM_2 \otimes \bI_3$ is singly generated, say with generator $x$. Then, letting $\Phi,\Psi$ be as in the above theorem, we can set
\begin{equation}
a = \id_{\bM_2} \otimes \Phi(x) \in \bM_2 \otimes \bM_{2^{\infty}}
\end{equation}
and
\begin{equation}
b = \id_{\bM_2} \otimes \Psi(x) \in \bM_2 \otimes \bM_{2^{\infty}}.
\end{equation}
Let us identify $\bM_2 \otimes \bM_{2^{\infty}}$ with $\bM_{2^{\infty}}$ in order to think of $a$ and $b$ as elements of $\bM_{2^{\infty}}$.
As $\Phi^{(3)}$ and $\Psi^{(3)}$ are approximately unitarily equivalent, we have that
\begin{equation}
\dist(U(a^{(3k)}),U(b^{(3k)})) \leq \dist(U(a^{(3)}),U(b^{(3)})) = 0,
\end{equation}
and hence $a^{(3k)} \simeq_a b^{(3k)}$ for all $k \in \bN$. Further, as $a,b$ are not approximately unitarily equivalent, set
\begin{equation}
\delta = \dist(U(a),U(b)) > 0.
\end{equation}
Now, as $\bM_{2^{\infty}}$ is $\bM_{2^{\infty}}$-stable, it follows from Proposition \ref{prop: Mn-stable distance} that
\begin{equation}
\delta = \dist(U(a^{(2^k)}),U(b^{(2^k)}))
\end{equation}
for all $k \in \bN$.
\end{proof}
\end{cor}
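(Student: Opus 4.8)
The plan is to convert the homomorphism-level statement of Theorem~\ref{thm:thm4.8} into a statement about the unitary orbits of two honest elements of $\bM_{2^\infty}$, and then to read off the two distance computations from Theorem~\ref{thm:thm4.8} and Proposition~\ref{prop: Mn-stable distance} respectively. First I would invoke \cite[Proposition 3.7]{MarcouxZhang21} to fix a single generator $x$ of $\bM_2 \otimes \bI_3 = \bM_2(\bI_3)$, take $\Phi, \Psi \colon \bI_3 \to \bM_{2^\infty}$ from Theorem~\ref{thm:thm4.8}, and set $a := (\id_{\bM_2} \otimes \Phi)(x)$ and $b := (\id_{\bM_2} \otimes \Psi)(x)$, viewed inside $\bM_2 \otimes \bM_{2^\infty} \cong \bM_{2^\infty}$. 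The crucial translation is that $a \simeq_a b$ holds if and only if $\id_{\bM_2} \otimes \Phi \simeq_a \id_{\bM_2} \otimes \Psi$ as homomorphisms: the ``if'' direction is immediate by evaluating the intertwining at $x$, while the ``only if'' direction uses that $x$ generates $\bM_2 \otimes \bI_3$, so that conjugating $a$ by a sequence of unitaries and invoking the density of $*$-polynomials in $x, x^*$ together with the isometry of the conjugations transfers the approximate intertwining to all of $\bM_2 \otimes \bI_3$.

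For part (2) I would first establish $a^{(3)} \simeq_a b^{(3)}$ and then amplify. Reorganising the tensor legs, $a^{(3)} = 1_{\bM_3} \otimes (\id_{\bM_2} \otimes \Phi)(x)$ is unitarily equivalent, via the fixed unitary implementing the flip $\bM_3 \otimes \bM_2 \cong \bM_2 \otimes \bM_3$, to $(\id_{\bM_2} \otimes \Phi^{(3)})(x)$, and likewise for $b$. Since Theorem~\ref{thm:thm4.8}(1) gives $\Phi^{(3)} \simeq_a \Psi^{(3)}$, tensoring with $\id_{\bM_2}$ yields $\id_{\bM_2} \otimes \Phi^{(3)} \simeq_a \id_{\bM_2} \otimes \Psi^{(3)}$, and evaluating at $x$ produces $a^{(3)} \simeq_a b^{(3)}$. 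As approximate unitary equivalence is preserved under amplification, $a^{(3k)} = (a^{(3)})^{(k)} \simeq_a (b^{(3)})^{(k)} = b^{(3k)}$, so $\dist(U(a^{(3k)}), U(b^{(3k)})) = 0$ for every $k \in \bN$.

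The heart of the argument, and the step I expect to be the main obstacle, is showing $\delta := \dist(U(a), U(b)) > 0$, i.e. $a \not\simeq_a b$. Suppose to the contrary that $a \simeq_a b$. By the translation above, $\id_{\bM_2} \otimes \Phi \simeq_a \id_{\bM_2} \otimes \Psi$; restricting this intertwining to the subalgebra $1_{\bM_2} \otimes \bI_3 \cong \bI_3$, on which these maps act as the $2$-fold amplifications $\Phi^{(2)}$ and $\Psi^{(2)}$, gives $\Phi^{(2)} \simeq_a \Psi^{(2)}$. The delicate point is that this amplification by $2$ must now be inverted at the level of homomorphisms, which is exactly what strong self-absorption of $\bM_{2^\infty}$ permits: arguing as in Proposition~\ref{prop: Mn-stable distance} (via \cite[Corollary 1.12]{TomsWinter07}), the embedding $\iota \colon \bM_{2^\infty} \to \bM_2 \otimes \bM_{2^\infty}$, $y \mapsto 1_{\bM_2} \otimes y$, is approximately unitarily equivalent to an isomorphism $\sigma$. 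Hence $\Phi^{(2)} = \iota \circ \Phi \simeq_a \sigma \circ \Phi$ and $\Psi^{(2)} \simeq_a \sigma \circ \Psi$, so $\sigma \circ \Phi \simeq_a \sigma \circ \Psi$; composing with the isomorphism $\sigma^{-1}$ yields $\Phi \simeq_a \Psi$, contradicting Theorem~\ref{thm:thm4.8}(2). Therefore $\delta > 0$.

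Finally, for part (1) I would feed $\delta$ into Proposition~\ref{prop: Mn-stable distance}. Since $\bM_2 \otimes \bM_{2^\infty} \cong \bM_{2^\infty}$ is $\bM_{2^\infty}$-stable, and the supernatural number of $2^k$ coincides with that of $2$ so that this algebra is also $\bM_{(2^k)^\infty}$-stable, applying the proposition with amplification multiplicity $2^k$ (equivalently, iterating the $n = 2$ case $k$ times, each intermediate amplification again being $\bM_{2^\infty}$-stable) gives $\dist(U(a^{(2^k)}), U(b^{(2^k)})) = \dist(U(a), U(b)) = \delta$ for all $k \in \bN$, which completes the proof.
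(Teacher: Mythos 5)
Your proof is correct, and it follows the same overall route as the paper: the same elements $a = (\id_{\bM_2}\otimes\Phi)(x)$ and $b = (\id_{\bM_2}\otimes\Psi)(x)$ built from single generation of $\bM_2(\bI_3)$, the same use of Theorem~\ref{thm:thm4.8}(1) to get $a^{(3)}\simeq_a b^{(3)}$ and then the (easy, divisibility-direction) monotonicity to pass to all multiples of $3$, and the same appeal to Proposition~\ref{prop: Mn-stable distance} (iterated, or applied directly with $n=2^k$ since $\bM_{(2^k)^{\infty}}\cong\bM_{2^{\infty}}$) for part (1). The one place where you genuinely go beyond the paper is the verification that $\delta=\dist(U(a),U(b))>0$. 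The paper simply asserts that $a$ and $b$ are not approximately unitarily equivalent -- implicitly leaning on \cite{MarcouxZhang21}, where these specific elements were constructed and shown to have this property -- whereas you deduce it from Theorem~\ref{thm:thm4.8}(2) alone, and your deduction is sound and fills a real logical step: $\Phi\not\simeq_a\Psi$ does not formally yield $a\not\simeq_a b$, because single generation only converts $a\simeq_a b$ into $\Phi^{(2)}\simeq_a\Psi^{(2)}$, and one must then invert the $2$-amplification at the level of homomorphisms. Your argument for this inversion -- the second-factor embedding $\bM_{2^{\infty}}\to\bM_2\otimes\bM_{2^{\infty}}$ is approximately unitarily equivalent to an isomorphism $\sigma$ by \cite[Corollary 1.12]{TomsWinter07}, so $\Phi^{(2)}\simeq_a\Psi^{(2)}$ gives $\sigma\circ\Phi\simeq_a\sigma\circ\Psi$ and hence $\Phi\simeq_a\Psi$, a contradiction -- is precisely the homomorphism-level analogue of Proposition~\ref{prop: Mn-stable distance}, and it has the merit of making the corollary self-contained modulo Theorem~\ref{thm:thm4.8}, at the cost of a somewhat longer proof than the paper's citation-based shortcut.
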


%%%%%%%%%%%%%%%%%%%%%%%%%%%%%%%%%%%%%%%%%%

As we have just seen, it is possible to find $a, b \in \bM_{2^\infty}$ such that $a^{(3)} \simeq_a b^{(3)}$ although $a \not \simeq_a b$, and the obstruction lies at the level of $a^{(2)}$ and $b^{(2)}$.   As we now demonstrate, if we now \emph{suppose} that \emph{both} $a^{(3)} \simeq_a b^{(3)}$ and $a^{(2)} \simeq_a b^{(2)}$, then we may indeed conclude that $a \simeq_a b$.

In fact, we prove something more general.  Before doing so, we require a small technical lemma.

\begin{lemma}
Let $2 \le n \in \bN$ and suppose that $p$ and $q$ are coprime positive integers.  Then there exist positive integers $c_1$, $c_2$, and $r$ such that
\[c_1p+c_2q=n^r.\]

\begin{proof}
By Bezout's theorem, there exists integers $l_1, l_2$ such that $l_1p+l_2q=1$. It is easy to see that one of $l_1, l_2$
is positive, and the other one must be negative. Without loss of generality, we assume that $l_1<0 < l_2$. Since  $n\geq 2$, we can
choose a sufficiently large positive integer $r$ for which
\[n^r\geq 2p|l_1 p|+p.\]

Next, choose $k,d\in \mathbb{N}$,
such that
\[n^r=kp+d,\]
where $1\leq d\leq p$.
Since
\[
n^r\geq 2p|l_1 p|+p, \]
it follows that $k\geq 2|l_1p|$. Therefore,
\[n^r=kp+d\cdot 1=kp+d(l_1p+l_2q)=(k+dl_1)p+dl_2q.\]
Set $c_1=k+dl_1, c_2=dl_2$, and note that
\[c_1=k+dl_1\geq k+pl_1\geq |l_1 p|,\]
so that $c_1$ is a positive integer, as is $c_2$. Clearly
\[c_1p+c_2q=n^r,\]
completing the proof.
\end{proof}
\end{lemma}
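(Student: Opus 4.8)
The plan is to reduce the statement to the classical two-coin Frobenius problem and then to exploit the fact that the powers of $n$ grow without bound. The key input is Sylvester's theorem: since $\gcd(p,q)=1$, every integer strictly greater than $pq-p-q$ can be written as $ap+bq$ with $a,b$ nonnegative integers. My only remaining task is then to realise a power of $n$ in this form while forcing \emph{both} coefficients to be strictly positive.

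First I would use the hypothesis $n\ge 2$ to fix an exponent $r$ so large that $n^r>pq$; this is possible because $n^r\to\infty$. For such an $r$ we have $n^r-p-q>pq-p-q$, so Sylvester's theorem supplies nonnegative integers $c_1'$ and $c_2'$ with $n^r-p-q=c_1'p+c_2'q$. Setting $c_1=c_1'+1$ and $c_2=c_2'+1$ then gives
\[
c_1p+c_2q=(c_1'p+c_2'q)+p+q=n^r,
\]
with $c_1,c_2\ge 1$, which is exactly what is required. The degenerate cases $p=1$ or $q=1$ are covered uniformly, since then $pq-p-q<0$ and the same bookkeeping applies.

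An alternative, more hands-on route would begin from Bezout's identity $l_1p+l_2q=1$, in which necessarily one coefficient is positive and the other negative; multiplying by $n^r$ produces a representation with integer but not necessarily positive coefficients, and one then replaces $(n^rl_1,n^rl_2)$ by $(n^rl_1+tq,\,n^rl_2-tp)$ for a suitably chosen integer $t$ to slide both coefficients into the positive range. In either approach the only point needing care is arranging simultaneous strict positivity of $c_1$ and $c_2$ while keeping the target exactly equal to $n^r$; both demands are met purely by taking $r$ large, which is precisely where the assumption $n\ge 2$ enters. I therefore expect no substantive obstacle beyond this elementary bookkeeping.
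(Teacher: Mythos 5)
Your primary argument is correct, but it takes a genuinely different route from the paper. You invoke Sylvester's theorem on the Frobenius number: since $\gcd(p,q)=1$, every integer exceeding $pq-p-q$ is a \emph{nonnegative} integer combination of $p$ and $q$; you then choose $r$ with $n^r>pq$, represent $n^r-p-q$, and add $1$ to each coefficient to force strict positivity. The bookkeeping checks out: $n^r>pq\geq p+q-1$ guarantees $n^r-p-q\geq 0$, and the shift by $p+q$ exactly restores the target $n^r$ while making both coefficients at least $1$; the degenerate cases $p=1$ or $q=1$ are indeed absorbed automatically since then $pq-p-q<0$. The paper instead gives a self-contained argument from Bezout's identity alone: writing $l_1p+l_2q=1$ with $l_1<0<l_2$, choosing $r$ so that $n^r\geq 2p|l_1p|+p$, applying the division algorithm $n^r=kp+d$ with $1\leq d\leq p$, and substituting $d=d(l_1p+l_2q)$ to get $n^r=(k+dl_1)p+(dl_2)q$ with explicit lower bounds certifying positivity. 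What your approach buys is brevity and conceptual clarity, at the cost of citing a classical (though standard and perfectly citable) result; what the paper's approach buys is complete self-containment -- it is, in effect, an inline proof of the special case of Sylvester's theorem that is needed. Your sketched ``alternative, more hands-on route'' (multiplying Bezout by $n^r$ and sliding the coefficients by $(tq,-tp)$) is essentially the same mechanism as the paper's proof, just organized around translation along the solution line rather than the division algorithm, and it would also work once $r$ is large enough to leave room for a value of $t$ making both coefficients positive simultaneously.
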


As a consequence of this Lemma and Corollary~\ref{cor:4.4}, we obtain the following result.

\begin{theorem}\label{main theorem 1}
Let $p$ and $q$ are coprime positive integers, $n \geq 2$ is an integer, $\bA$ be a unital separable C*-algebra satisfying
$\bA \simeq \bM_{n^{\infty}} \otimes \bA$. Suppose that $a,b\in \bA$ satisfy $a^{(p)}\simeq_a b^{(p)}$ in $\bM_{p}(\bA)$, and
$a^{(q)}\simeq_a b^{(q)}$ in $\bM_{q}(\bA)$. Then $a$ and $b$ are approximately unitarily equivalent in $\bA$.
\end{theorem}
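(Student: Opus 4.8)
The plan is to fuse the two amplification hypotheses into a single amplification of multiplicity $n^r$, and then to collapse that amplification using the $\bM_{n^\infty}$-stability of $\bA$. The exponent $n^r$ is exactly what the preceding lemma provides: positive integers $c_1, c_2, r$ with $c_1 p + c_2 q = n^r$. This is the one genuinely nontrivial ingredient, and with it in hand the remainder is essentially bookkeeping.

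First I would amplify each hypothesis separately. Approximate unitary equivalence is stable under amplification, since if $u_k^* a^{(p)} u_k \to b^{(p)}$ in $\bM_p(\bA)$, then conjugating by $1_{\bM_{c_1}} \otimes u_k$ shows that $a^{(c_1 p)} \simeq_a b^{(c_1 p)}$ in $\bM_{c_1 p}(\bA)$; likewise $a^{(q)} \simeq_a b^{(q)}$ yields $a^{(c_2 q)} \simeq_a b^{(c_2 q)}$ in $\bM_{c_2 q}(\bA)$.

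Next I would combine these by taking direct sums. Given sequences $(w_k)$ and $(z_k)$ of unitaries with $w_k^* a^{(c_1 p)} w_k \to b^{(c_1 p)}$ and $z_k^* a^{(c_2 q)} z_k \to b^{(c_2 q)}$, conjugation by the block-diagonal unitaries $w_k \oplus z_k \in \bM_{n^r}(\bA)$ gives $a^{(c_1 p)} \oplus a^{(c_2 q)} \simeq_a b^{(c_1 p)} \oplus b^{(c_2 q)}$. But $a^{(c_1 p)} \oplus a^{(c_2 q)}$ is literally $c_1 p + c_2 q = n^r$ copies of $a$ arranged block-diagonally, i.e. it equals $a^{(n^r)}$, and similarly $b^{(c_1 p)} \oplus b^{(c_2 q)} = b^{(n^r)}$. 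Hence $a^{(n^r)} \simeq_a b^{(n^r)}$ in $\bM_{n^r}(\bA)$. The only point warranting a second of care is the direct-sum step itself, namely that conjugating by $w_k \oplus z_k$ transports both approximate equivalences simultaneously without disturbing the limit.

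Finally I would invoke the stability results. Since $(n^r)^\infty = n^\infty$ as supernatural numbers, we have $\bM_{(n^r)^\infty} \simeq \bM_{n^\infty}$ and therefore $\bA \simeq \bM_{(n^r)^\infty} \otimes \bA$. Applying Corollary~\ref{cor:4.4} with the divisor $m = n^r$ of $n^r$ (equivalently, Proposition~\ref{prop: Mn-stable distance} with multiplicity $n^r$) then gives $\dist(U(a),U(b)) = \dist(U(a^{(n^r)}),U(b^{(n^r)})) = 0$, so that $a \simeq_a b$, as desired. The crux of the argument is thus entirely number-theoretic: everything hinges on writing a common power $n^r$ as a positive integer combination $c_1 p + c_2 q$, which the coprimality of $p$ and $q$ guarantees, and I do not expect any substantive analytic obstacle beyond this.
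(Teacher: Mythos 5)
Your proposal is correct and follows essentially the same route as the paper: invoke the lemma to write $n^r = c_1p + c_2q$, identify $a^{(n^r)}$ with $(a^{(p)})^{(c_1)} \oplus (a^{(q)})^{(c_2)}$ to get $a^{(n^r)} \simeq_a b^{(n^r)}$, and conclude via Corollary~\ref{cor:4.4}. Your explicit justification of the direct-sum step and the observation that $\bM_{(n^r)^\infty} \simeq \bM_{n^\infty}$ (so that the corollary applies with $m = n^r$) merely spell out details the paper leaves implicit.
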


\begin{proof}
Using the previous Lemma, we may choose positive integers $c_1, c_2$ and $r \ge 2$ such that $n^r = c_1 p +c_2 q$.    The hypotheses then imply that
\[
a^{(n^r)} = (a^{ (p)})^{(c_1)} \oplus (a^{(q)})^{(c_2)} \simeq_a (b^{ (p)})^{(c_1)} \oplus (b^{(q)})^{(c_2)}  \simeq b^{(n^r)}. \]
The result is now follows from Corollary~\ref{cor:4.4}.
\end{proof}

%%%%%%%%%%%%%%%%%%%%%%%%%%%%%%%%%%%%%%%%%%

The analysis of Theorem~\ref{thm:thm4.8} and Corollary~\ref{cor: CAR elements not satisfying 3-KP} can be used to show that there exist $n \in \bN$ and $a,b \in \bM_{2^{n}}$ such that
\begin{equation}
\dist(U(a),U(b)) > \dist(U(a^{(3)}),U(b^{(3)})).
\end{equation}
In fact, the proof actually gives us the following, which allows us to conclude that the distances between unitary orbits in Question \ref{factor-pair-amplification} will have no relationship, even for matrices.

%%%%%%%%%%%%%%%%%%%%%%%%%%%%%%%%%%%%%%%%%%

\begin{prop}\label{prop: existence of non-KP matrices}
Fix $k \in \bN \cup \{0\}$ and $l \in \bN$. There exist a positive integer $n \in \bN$ and matrices $a,b \in \bM_{2^{n}}$ such that
\begin{equation}
\dist(U(a^{(2^{k})}),U(b^{(2^{k})}))  > \dist(U(a^{(3l)}),U(b^{(3l)})).
\end{equation}
\begin{proof}
We shall argue by contradiction. Suppose not; that is, suppose that
\begin{equation}
\dist(U(a^{(2^k)}),U(b^{(2^k)})) \leq \dist(U(a^{(3l)}),U(b^{(3l)}))
\end{equation}
for all integers $n$ and pairs of matrices $a,b \in \bM_{2^{n}}$.

Let $a,b \in \bM_{2^{\infty}}$ be the elements appearing in Corollary~\ref{cor: CAR elements not satisfying 3-KP}, and without loss of generality assume that $\|a\| = \|b\| = 1$. Note that $a^{(3)} \simeq_a b^{(3)}$ implies that $a^{(3l)} \simeq_a b^{(3l)}$.
Let $0 < \ee < 1$ and choose a unitary element $u \in \bM_{3l}(\bM_{2^{\infty}})$ such that $\|b^{(3l)} - u^*a^{(3l)}u\| < \ee$.

Now $a,b \in \bM_{2^{\infty}}$ implies that there is an integer $n \in \bN$, $a_n,b_n \in \bM_{2^n} \subseteq \bM_{2^{\infty}}$ and $u_n \in U(\bM_{3l}(\bM_{2^{n}})) \subseteq \bM_{3l}(\bM_{2^{\infty}})$ such that $\|a_n - a\|, \|b_n - b\|, \|u_n - u\| < \ee$ (the fact that we can approximate unitaries in the inductive limit C*-algebra by unitaries from its building blocks can be found as \cite[Lemma 2.7.2(4)]{LinBook}).
Applying the triangle inequality multiple times shows that
\begin{equation}
\|b_n^{(3l)} - u_n^*a_n^{(3l)}u_n\| < 5\ee.
\end{equation}
It then follows that
\begin{equation}
\dist(U(a_n^{(2^k)}),U(b_n^{(2^k)})) < 5\ee.
\end{equation}
That is, we can now find a unitary $v_n \in U(\bM_{2^k}(\bM_{2^n})) \subseteq \bM_{2^{\infty}}$ such that
\begin{equation}
\|b_n^{(2^k)} - v_n^*a_n^{(2^k)}v_n\| < 5\ee.
\end{equation}
Once again, the triangle inequality yields that
\begin{equation}
\|b^{(2^k)} - v_n^*a^{(2^k)}v_n\| < 7\ee.
\end{equation}
Since $\ee$ was arbitrary, this implies that $a$ and $b$ are approximately unitarily equivalent in $\bM_{2^{\infty}}$. This is a contradiction.
\end{proof}
\end{prop}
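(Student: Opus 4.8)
The plan is to argue by contradiction, importing the infinite-dimensional separation already produced in Corollary~\ref{cor: CAR elements not satisfying 3-KP} and transferring it down to a finite matrix level. Suppose, toward a contradiction, that no such pair of matrices exists; that is, suppose that
\begin{equation*}
\dist(U(a^{(2^k)}),U(b^{(2^k)})) \leq \dist(U(a^{(3l)}),U(b^{(3l)}))
\end{equation*}
holds for \emph{every} positive integer $n$ and \emph{every} pair $a,b \in \bM_{2^n}$. I would then take the elements $a,b \in \bM_{2^{\infty}}$ furnished by Corollary~\ref{cor: CAR elements not satisfying 3-KP}, normalized so that $\|a\|,\|b\| \le 1$, for which $\dist(U(a^{(2^k)}),U(b^{(2^k)})) = \delta > 0$ while $a^{(3)} \simeq_a b^{(3)}$ (so also $a^{(3l)} \simeq_a b^{(3l)}$). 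The object is to show that the assumed uniform finite-level inequality forces $a^{(2^k)} \simeq_a b^{(2^k)}$ in $\bM_{2^{\infty}}$, contradicting $\delta > 0$. (When $k=0$ the amplification $a^{(2^0)}$ is just $a$ itself, and $\dist(U(a),U(b)) = \delta > 0$ still applies.)

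First I would fix $0 < \ee < 1$ and use $a^{(3l)} \simeq_a b^{(3l)}$ to pick a unitary $u \in \bM_{3l}(\bM_{2^{\infty}})$ with $\|b^{(3l)} - u^*a^{(3l)}u\| < \ee$. Since $\bM_{2^{\infty}} = \overline{\bigcup_n \bM_{2^n}}$, I can approximate $a,b$ by $a_n,b_n \in \bM_{2^n}$ and $u$ by a genuine unitary $u_n \in U(\bM_{3l}(\bM_{2^n}))$, each to within $\ee$ in norm; that the approximant may be taken to be an honest unitary from a building block is \cite[Lemma 2.7.2(4)]{LinBook}. Repeated use of the triangle inequality, together with the facts that amplification and conjugation by a unitary are isometric and that $\|a\| \le 1$, then yields $\|b_n^{(3l)} - u_n^*a_n^{(3l)}u_n\| < 5\ee$, whence $\dist(U(a_n^{(3l)}),U(b_n^{(3l)})) < 5\ee$ at the finite level. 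At this point the contradiction hypothesis applies to the pair $a_n,b_n \in \bM_{2^n}$, giving $\dist(U(a_n^{(2^k)}),U(b_n^{(2^k)})) < 5\ee$ and hence a unitary $v_n \in U(\bM_{2^k}(\bM_{2^n}))$ with $\|b_n^{(2^k)} - v_n^*a_n^{(2^k)}v_n\| < 5\ee$. A final triangle-inequality estimate, pushing this back up via $\|a_n - a\|,\|b_n - b\| < \ee$, produces $\|b^{(2^k)} - v_n^*a^{(2^k)}v_n\| < 7\ee$. Since $\ee$ was arbitrary, this gives $\dist(U(a^{(2^k)}),U(b^{(2^k)})) = 0$, contradicting the value $\delta > 0$.

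The production of the example is entirely outsourced to Corollary~\ref{cor: CAR elements not satisfying 3-KP}, so that part is routine. The hard part, and the only place demanding genuine care, is the double transfer across the inductive limit and the bookkeeping of the error constants: one must approximate $a$, $b$, and $u$ \emph{simultaneously} inside a single finite stage $\bM_{2^n}$, ensure that $u_n$ is honestly unitary (not merely close to a unitary, which is where the cited lemma enters), and verify that the $\ee$-errors accumulate only to the controlled multiples $5\ee$ going down and $7\ee$ coming back. Because conjugation is isometric and amplification preserves the norm, each error term is bounded by a fixed multiple of $\ee$ independent of $n$, which is exactly what legitimizes the concluding $\ee \to 0$ limit.
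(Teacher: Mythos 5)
Your proposal is correct and follows essentially the same argument as the paper's proof: the same contradiction hypothesis, the same elements from Corollary~\ref{cor: CAR elements not satisfying 3-KP}, the same simultaneous approximation of $a$, $b$, and the unitary $u$ inside a finite stage $\bM_{2^n}$ via \cite[Lemma 2.7.2(4)]{LinBook}, and the same $5\ee$/$7\ee$ bookkeeping. The only (harmless) cosmetic difference is at the end, where you contradict $\dist(U(a^{(2^k)}),U(b^{(2^k)})) = \delta > 0$ directly, while the paper phrases the contradiction as $a \simeq_a b$; these amount to the same thing by item (1) of that corollary.
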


One is led to naturally ask: how small can we choose the power of 2?

%%%%%%%%%%%%%%%%%%%%%%%%%%%%%%%%%%%%%%%%%%

\begin{question}
Are there matrices $S,T \in \bM_2$ such that
\begin{equation}
\dist(U(S),U(T)) > \dist(U(S^{(3)}),U(T^{(3)}))?
\end{equation}
\end{question}

%%%%%%%%%%%%%%%%%%%%%%%%%%%%%%%%%%%%%%%%%%

\begin{question}
Suppose that $n \in \bN$ and $k$ is co-prime to $n$. Can one find matrices $S,T \in \bM_k$ such that
\begin{equation}
\dist(U(S),U(T)) > \dist(U(S^{(n)}),U(T^{(n)}))?
\end{equation}
What can be said if we do not assume that $n$ and $k$ are coprime? Say $d = \gcd(n,k)$. Do we necessarily have that
\begin{equation}
\dist(U(S),U(T)) = \dist(U(S^{(d)}),U(T^{(d)}))
\end{equation}
whenever $S,T \in \bM_k$?
\end{question}

%%%%%%%%%%%%%%%%%%%%%%%%%%%%%%%%%%%%%%%%%%
%%%%%%%%%%%%%%%%%%%%%%%%%%%%%%%%%%%%%%%%%%
% SECTION FIVE
%%%%%%%%%%%%%%%%%%%%%%%%%%%%%%%%%%%%%%%%%%
%%%%%%%%%%%%%%%%%%%%%%%%%%%%%%%%%%%%%%%%%%

\section{Purely infinite C*-algebras}

%%%%%%%%%%%%%%%%%%%%%%%%%%%%%%%%%%%%%%%%%%

\subsection{Normal elements in unital simple purely infinite C*-algebras}\label{subs: normal elements in pi}

It is well-known that the unitary orbits of normal elements in the Calkin algebra $\cQ(\cH) = \cB(\cH)/\bK$ are classified by their spectrum and their index. Indeed, the Brown-Douglas-Fillmore Theorem \cite{BrownDouglasFillmore06} states that two essentially normal operators $N$ and $M$ are  unitarily equivalent if and only if they have the same spectrum and their semi-Fredholm index functions agree (see \cite[Section II.4]{DavidsonBook1}).

Recall that for an operator $T \in \bh$, $\sigma_e(T)$ denotes the \emph{essential spectrum} of $T$, which is the spectrum of $\pi(T)$ in $\bh/\bK$, where $\pi: \bh \to \bh/\bK$ is the quotient map.
If $T$ is invertible modulo the compacts, then both $T$ and $T^*$ have finite-dimensional kernels and the index
\begin{equation}
\ind(T) = \dim\ker T - \dim\ker T^* \in \bZ
\end{equation}
is well-defined.
We note that the index is additive with respect to direct sums: namely, if the index of $S$ and $T$ are defined, then
\begin{equation}
\ind (S \oplus T) = \ind (S) \oplus \ind (T).
\end{equation}
In particular, $\ind(T^{\oplus n}) = n\cdot \ind (T)$.

%%%%%%%%%%%%%%%%%%%%%%%%%%%%%%%%%%%%%%%%%%

\begin{theorem} \textbf{\emph{[BDF]}}
Let $M,N \in \bh$ be essentially normal operators. Then $\pi(M)$ and $\pi(N)$ are  unitarily equivalent if and only if
\begin{enumerate}
\item $\sigma_e(M) = \sigma_e(N)$ and
\item $\text{ind}(\lambda\cdot 1 - M) = \text{ind}(\lambda\cdot 1 - N)$ for all $\lambda \notin \sigma_e(M) = \sigma_e(N)$.
\end{enumerate}
\end{theorem}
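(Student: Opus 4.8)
The plan is to recast essential unitary equivalence of essentially normal operators as equality of classes in an extension ($K$-homology) group and then to compute that group. To an essentially normal $T$ with $X := \sigma_e(T)$, the image $\pi(T)$ is a normal element of the Calkin algebra $\cQ(\cH)$ with spectrum $X$, so continuous functional calculus yields a unital $*$-monomorphism $\tau_T : C(X) \into \cQ(\cH)$ with $\tau_T(z) = \pi(T)$, where $z$ is the coordinate function. Pulling the Calkin extension $0 \to \bK \to \bh \xrightarrow{\pi} \cQ(\cH) \to 0$ back along $\tau_T$ gives an essential extension of $C(X)$ by $\bK$, whose strong-equivalence class $[\tau_T]$ lives in $\operatorname{Ext}(X) := \operatorname{Ext}(C(X))$. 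The basic dictionary I would establish first is: once $\sigma_e(M) = \sigma_e(N) =: X$, one has $[\tau_M] = [\tau_N]$ in $\operatorname{Ext}(X)$ if and only if there is a unitary $U \in \bh$ with $UMU^* - N$ compact; since such a $U$ gives $\pi(U)^*\pi(M)\pi(U) = \pi(N)$, equality of $\operatorname{Ext}$-classes yields the unitary equivalence of $\pi(M)$ and $\pi(N)$ in $\cQ(\cH)$ demanded by the theorem.

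Necessity of (1) and (2) I would dispatch directly. Condition (1) holds because the spectrum of a normal element is preserved by unitary conjugation. For (2), the crucial point is that the Fredholm index descends to a conjugation-invariant homomorphism $\ind : \cQ(\cH)^{-1} \to \bZ$ on the invertible group of the Calkin algebra; thus if $\pi(N) = w^*\pi(M)w$ for any unitary $w \in \cQ(\cH)$, then $\lambda - \pi(N)$ is conjugate to $\lambda - \pi(M)$ for every $\lambda \notin X$, forcing $\ind(\lambda - N) = \ind(\lambda - M)$. Note this uses only a Calkin unitary, so no lifting is needed here.

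The sufficiency direction is where the real work lies, and I would import the machinery of Brown--Douglas--Fillmore theory. The first ingredient is that $\operatorname{Ext}(X)$ is a \emph{group}: every essential extension is invertible. This is the analytic core and follows from Voiculescu's noncommutative Weyl--von Neumann theorem, which shows that trivial (split) extensions are absorbing. Granting this, along with the homotopy invariance and half-exactness of the functor $X \mapsto \operatorname{Ext}(X)$, I would compute $\operatorname{Ext}(X)$ for compact $X \subseteq \bC$. The base case is $X = \bT$, where $\operatorname{Ext}(\bT) \cong \bZ$ with the isomorphism implemented by the Fredholm index and the generator represented by the unilateral shift; equivalently, an essentially normal operator with essential spectrum the circle is classified up to essential unitary equivalence by its index. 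A Mayer--Vietoris (equivalently, Alexander-duality) argument then bootstraps the circle computation to $\operatorname{Ext}(X) \cong \prod_{C} \bZ$, the product taken over the bounded connected components $C$ of $\bC \setminus X$, with $[\tau_T]$ corresponding to the locally constant function $\lambda \mapsto \ind(\lambda - T)$.

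Assembling the pieces then finishes the argument: if $M$ and $N$ satisfy (1) and (2) they have identical index functions on $\bC \setminus X$, hence determine the same class in $\operatorname{Ext}(X) \cong \prod_C \bZ$, and so by the dictionary of the first paragraph $\pi(M)$ and $\pi(N)$ are unitarily equivalent in $\cQ(\cH)$. The hard part is concentrated entirely in sufficiency, namely the invertibility of extensions (so that $\operatorname{Ext}(X)$ is a group at all) and the homological computation $\operatorname{Ext}(X) \cong \prod_C \bZ$; both rest on Bott periodicity through the circle base case. These are precisely the main theorems of the original Brown--Douglas--Fillmore papers \cite{BrownDouglasFillmore06}, so in practice I would quote them rather than reprove them.
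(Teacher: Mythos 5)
Your proposal is correct: it is the standard Ext-theoretic argument of Brown--Douglas--Fillmore, with the two genuinely hard ingredients (invertibility of essential extensions, i.e.\ that $\operatorname{Ext}(X)$ is a group, and the computation that the index function gives an isomorphism onto $\prod_C \bZ$ over the bounded components of $\bC \setminus X$) correctly isolated and deferred to the original papers. The paper itself does not prove this statement at all --- it is quoted as a known background theorem with citations to \cite{BrownDouglasFillmore06} and \cite[Section II.4]{DavidsonBook1} --- so your sketch, which invokes precisely those sources for the substance, matches the paper's treatment.
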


It follows from the continuity of the spectrum of normal elements of the Calkin algebra (with respect to the Hausdorff distance on compact subsets of $\bC$) and the continuity of the Fredholm index that unitary orbits of normal elements of the Calkin algebra are closed.   As such, two normal elements in the Calkin algebra are approximately unitarily equivalent if and only if they are unitarily equivalent, which in turn coincides with the condition that they have the same spectrum and same index function.  Thus each instance of unitary equivalence in the next Corollary can be replaced with instances of approximate unitary equivalence.  We point out that not all unitary orbits in the Calkin algebra are closed.  For example, Davidson~\cite{Davidson84} has shown that if $W$ is the Kakutani shift, then the unitary orbit of $\pi(W)$ is not closed in the Calkin algebra.

%%%%%%%%%%%%%%%%%%%%%%%%%%%%%%%%%%%%%%%%%%

\begin{cor}
Let $a,b \in \bA := \bh/\bK$ be two normal elements in the Calkin algebra. The following are equivalent.
\begin{enumerate}
\item $a \simeq b$ in $\bA$;
\item $a^{(n)} \simeq b^{(n)}$ in $\bM_n(\bA)$ for all $n \in \bN$;
\item $a^{(n)} \simeq b^{(n)}$ in $\bM_n(\bA)$ for some $n \in \bN$.
\end{enumerate}
\begin{proof}
We only need to show (3) implies (1). Note that the spectrum is invariant under taking matrix amplifications, and that the index is additive with respect to direct sums. As the index is integer valued, coupled with the fact that
\begin{equation}
nx = ny \Rightarrow x = y \text{ whenever } x,y \in \bZ \text{ and } n \in \bN,
\end{equation}
it follows that $a \simeq_a b$ in $\bA$ by BDF.
\end{proof}
\end{cor}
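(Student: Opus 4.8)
The plan is to prove the only nontrivial implication, namely that (3) implies (1); the implications (1) $\Rightarrow$ (2) $\Rightarrow$ (3) are immediate, since unitary equivalence of $a$ and $b$ is visibly inherited by every amplification, and ``for all $n$'' trivially yields ``for some $n$.''

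First I would fix an $n \in \bN$ for which $a^{(n)} \simeq b^{(n)}$ in $\bM_n(\bA)$, and observe that $\bM_n(\bA) = \bM_n(\bh/\bK)$ is again a Calkin algebra. Indeed, under the canonical identification $\bM_n(\bh) \cong \cB(\cH^{\oplus n})$, which carries $\bM_n(\bK)$ onto $\bK(\cH^{\oplus n})$, one obtains $\bM_n(\bA) \cong \cB(\cH^{\oplus n})/\bK(\cH^{\oplus n})$. Since a normal element $a \in \bA$ is by definition the image $\pi(T)$ of some essentially normal $T \in \bh$, the amplification $a^{(n)}$ is the image of the essentially normal operator $T^{\oplus n}$, and similarly $b^{(n)}$ for $b$. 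Thus the BDF theorem applies directly to the pair $a^{(n)}, b^{(n)}$ inside this amplified Calkin algebra.

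Next I would extract the two BDF invariants from the hypothesis $a^{(n)} \simeq b^{(n)}$. The spectral condition gives $\sigma_e(a^{(n)}) = \sigma_e(b^{(n)})$, and since the essential spectrum is unchanged under amplification (the spectrum of $1_{\bM_n}\otimes a$ equals that of $a$), this already yields $\sigma_e(a) = \sigma_e(b)$. For the index condition I would use that $\lambda\cdot 1 - a^{(n)} = (\lambda\cdot 1 - a)^{(n)}$ together with the additivity of the Fredholm index under direct sums, so that $\text{ind}(\lambda\cdot 1 - a^{(n)}) = n\cdot \text{ind}(\lambda\cdot 1 - a)$ and likewise for $b$. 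The BDF equality of index functions then reads $n\cdot \text{ind}(\lambda\cdot 1 - a) = n\cdot \text{ind}(\lambda\cdot 1 - b)$ for every $\lambda \notin \sigma_e(a)$.

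Finally, since the index is $\bZ$-valued and $n \geq 1$, the cancellation law $nx = ny \Rightarrow x = y$ in $\bZ$ lets me strip off the factor of $n$ and conclude $\text{ind}(\lambda\cdot 1 - a) = \text{ind}(\lambda\cdot 1 - b)$ for all $\lambda \notin \sigma_e(a) = \sigma_e(b)$. With both BDF invariants now matching for $a$ and $b$ themselves, a second application of the BDF theorem delivers $a \simeq b$ in $\bA$. The only point requiring genuine care — and the closest thing to an obstacle in an otherwise elementary argument — is the opening step: one must confirm that passing to the $n$-fold matrix amplification keeps us inside the Calkin-algebra setting where BDF is valid, and that the invariants transform as claimed (the essential spectrum is preserved while the index is multiplied by $n$). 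Everything downstream reduces to integer cancellation.
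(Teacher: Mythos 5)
Your proposal is correct and follows essentially the same route as the paper: invariance of the essential spectrum under amplification, multiplication of the Fredholm index by $n$ under direct sums, cancellation in $\bZ$, and then the BDF theorem to conclude. The only difference is that you spell out the details the paper leaves implicit (identifying $\bM_n(\bA)$ with the Calkin algebra of $\cH^{\oplus n}$ and the two applications of BDF), which is a welcome expansion rather than a different argument.
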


%%%%%%%%%%%%%%%%%%%%%%%%%%%%%%%%%%%%%%%%%%

The Fredholm index of an essentially invertible operator is precisely the $K_1$-class of the image of the operator under the quotient map (or equivalently the coset of the connected component of the identity in the set of invertible elements of $\bA$ to which it belongs), and the index is precisely the index map $\partial_1: K_0(\bK) \to K_1(\bh/\bK)$ arising in the 6-term exact sequence for topological $K$-theory.
A generalization of the above is the following.

\begin{theorem}\cite[Theorem 1.7]{Dadarlat95approx}
Let $X$ be a compact metrizable space. Let $\bB$ be a unital, simple, purely infinite C*-algebra and let $\varphi,\psi: C(X) \to \bB$ be two unital *-homomorphisms. Then $[\varphi] = [\psi]$ in $KL(C(X),\bB)$ if and only if $\varphi$ is approximately unitarily equivalent to $\psi$.
\end{theorem}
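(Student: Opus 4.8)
The plan is to treat the two implications separately; the implication from approximate unitary equivalence to equality of $KL$-classes is routine, while the converse carries all the weight. For the routine direction, suppose $\varphi \simeq_a \psi$, so that there are unitaries $u_k \in U(\bB)$ with $u_k^* \varphi(f) u_k \to \psi(f)$ for every $f \in C(X)$. Each $\mathrm{Ad}(u_k)\circ\varphi$ is unitarily equivalent to $\varphi$ and hence represents $[\varphi]$ in $KK(C(X),\bB)$; the very reason for passing to the quotient group $KL(C(X),\bB) = KK(C(X),\bB)/\mathrm{Pext}$ is that this coarser invariant is continuous under point-norm limits of $*$-homomorphisms, so $[\psi]=[\varphi]$ in $KL$. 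The content is therefore entirely in the converse.

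For the converse I would first replace $KL$ by total K-theory via the Universal Multicoefficient Theorem of Dadarlat and Loring: since $C(X)$ lies in the bootstrap class, there is a natural isomorphism $KL(C(X),\bB) \cong \mathrm{Hom}_\Lambda(\underline{K}(C(X)), \underline{K}(\bB))$, where $\underline{K}(-) = K_*(-) \oplus \bigoplus_{n \ge 2} K_*(-;\bZ/n)$ is the total K-theory and $\Lambda$ is the category of Bockstein operations. Under this identification the hypothesis $[\varphi]=[\psi]$ becomes the statement that $\varphi$ and $\psi$ induce the same homomorphism on $\underline{K}$, compatibly with all Bockstein maps. The target is then a uniqueness theorem: matching total K-theory forces approximate unitary equivalence.

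To prove this uniqueness statement I would reduce to the case of a finite CW-complex. Fixing a finite set $F \subseteq C(X)$ and a tolerance $\epsilon>0$, write $X$ as an inverse limit of finite simplicial complexes so that $C(X) = \varinjlim C(X_i)$, and approximate $\varphi$ and $\psi$ on $F$ by $*$-homomorphisms factoring through some $C(X_i)$, checking that the total K-theoretic data descends after passing sufficiently far along the tower. For $X$ a finite complex I would then induct on dimension along the skeletal filtration. The base case $X = $ finite set is handled by matching $K_0$-classes with mutually orthogonal projections, using that in a purely infinite simple $\bB$ two projections are Murray--von Neumann equivalent exactly when their $K_0$-classes agree and that $\bB$ is properly infinite. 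For the inductive step, the cofibre sequence relating the $k$-skeleton to the $(k-1)$-skeleton attaches cells along maps of spheres whose effect is recorded precisely by the coefficient groups $K_*(-;\bZ/n)$; one builds the conjugating unitary cell by cell, using the abundance of projections and partial isometries in $\bB$ to realize the attaching data and to absorb the lower-dimensional discrepancies accumulated at the previous stage. A convenient device is that a trivial homomorphism $f \mapsto f(x_0)\cdot p$ can be absorbed up to approximate unitary equivalence by proper infiniteness, turning the uniqueness statement into a stable one governed solely by K-theoretic data.

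The main obstacle is exactly this inductive construction on finite complexes, and in particular the indispensability of the coefficient groups. Ordinary K-theory $K_0 \oplus K_1$ does not suffice: the dimension-drop algebras $\bI_n$ appearing earlier, with $K_1(\bI_n) = \bZ_n$, show that torsion attaching data genuinely distinguishes homomorphisms, so one must carry along the full $\underline{K}$ and verify compatibility with every Bockstein operation at each skeletal stage. The delicate bookkeeping — ensuring the unitaries produced at successive stages are compatible and that the norm errors introduced when absorbing lower cells stay small on $F$ — is where pure infiniteness of $\bB$ is essential, since it supplies the room to correct $K_1$- and torsion-level obstructions that would be rigid in a stably finite setting.
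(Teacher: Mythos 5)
You should first be aware that the paper does not prove this statement at all: it is quoted verbatim, with citation, from Dadarlat's paper \cite{Dadarlat95approx} and used as a black box, so your attempt can only be measured against the argument in the cited source. Within your proposal, the forward direction is fine (invariance of $KL$-classes under approximate unitary equivalence is essentially built into the definition of $KL$ as $KK$ modulo $\mathrm{Pext}$), and both reductions are sound: replacing $KL(C(X),\bB)$ by $\mathrm{Hom}_\Lambda(\underline{K}(C(X)),\underline{K}(\bB))$ via the Universal Multicoefficient Theorem (which, note, postdates the 1995 theorem you are proving), and reducing to finite complexes by composing with the connecting maps $C(X_i) \to C(X)$ and invoking functoriality of $KL$ together with a $3\ee$-argument on the finite set $F$.

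The genuine gap is the skeletal induction, which is exactly where the entire content of the theorem lives, and as described it cannot get started. The issue is structural: $C(X^{(k-1)})$ is a \emph{quotient} of $C(X^{(k)})$ (restriction of functions), not a subalgebra, so a unital $*$-homomorphism $\varphi\colon C(X^{(k)}) \to \bB$ induces no homomorphism on $C(X^{(k-1)})$ whatsoever; there are no ``lower-dimensional discrepancies accumulated at the previous stage,'' and no sense in which a conjugating unitary constructed for the $(k-1)$-skeleton can then be ``extended over the $k$-cells.'' What cell attachment actually provides is a pullback description of $C(X^{(k)})$ over $C(S^{k-1})$, and approximate unitary equivalence does not pass through pullbacks without heavy additional machinery. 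The proofs that exist in the literature route through precisely such machinery: in Dadarlat's treatment, shape-theoretic and asymptotic-homotopy results for morphisms out of $C(X)$; in later treatments, a stable uniqueness theorem asserting that $[\varphi]=[\psi]$ in $KL$ forces $\varphi \oplus \gamma^{(n)} \simeq_a \psi \oplus \gamma^{(n)}$ for a suitable absorbing summand $\gamma$, with the torsion ($\bZ/n$-coefficient) data converted into ordinary $K$-theory of auxiliary algebras such as the dimension-drop algebras $\bI_n$ via mapping cones. Purely infinite simplicity then enters exactly once: since $1_{\bB} \precsim p$ for every nonzero projection $p$, the absorbing summand can be compressed into an arbitrarily small corner and absorbed, recovering $\varphi \simeq_a \psi$. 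You correctly identify the two inputs (total $K$-theory with Bockstein compatibility, and absorption via pure infiniteness), but the engine connecting them is missing, and it is not ``delicate bookkeeping'' --- without it your induction has no inductive step.
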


Here, $KL$ is a quotient of $KK$, which is invariant under approximate unitary equivalence. As $KK$ (or $KL)$ recovers both the $K_0$ and $K_1$ groups (indeed, $C(X)$ satisfies the UCT, so there will always be a relationship between $KK(C(X),\bB)$ and the group homomorphisms between their $K$-theories \cite{RosenbergSchochet87}), we will see that both $K_0$ and $K_1$ can be obstructions to a unital, simple, purely infinite C*-algebra having a positive solution to Kaplansky's problem.

\subsection{$K$-theoretic obstructions and counterexamples}

We next provide some  counterexamples to pairs of operators satisfying [$\text{KP}_2$] in the setting of purely infinite C*-algebras, where the obstructions  come from topological $K$-theory.
We saw that certain classes of finite classifiable C*-algebras have the property that all pairs of self-adjoint elements satisfy $[\text{KP}_n]$ for all $n \ge 1$.
We now exhibit pairs of projections in certain classifiable purely infinite C*-algebras for which [$\text{KP}_2$] fails.  The obstruction comes from the existence of torsion in the $K_0$-group.
We also give examples showing that $[\text{KP}_n]$ can fail to hold for pairs of unitaries in purely infinite C*-algebras, where the obstruction comes from torsion in the $K_1$-group.

First recall a projection $p$ in a C*-algebra $\bA$ is called \emph{infinite} if it is Murray-von Neumann equivalent to a proper subprojection of itself, and $p$ is \emph{properly infinite} if there are mutually orthogonal projections $p_1,p_2$ in $\bA$ such that $p_1 + p_2 \leq p$ and
\begin{equation}
p \MvN p_1 \MvN p_2.
\end{equation}
A simple C*-algebra is \emph{purely infinite} if every hereditary subalgebra of $\bA$ contains an infinite projection; there are many equivalent definitions, at least in the unital simple setting, see \cite[Proposition 4.1.1]{RordamBook}.
We shall use the following basic facts. See \cite[Proposition 4.1.4]{RordamBook} and \cite[Corollary 6.11.9]{BlackadarKBook} (or appeal to \cite[Theorem 1.7]{Dadarlat95approx} and notice that two projections are unitarily equivalent if and only if, when we identify projections in $\bA \otimes \bK$ with *-homomorphisms $\bC \to A \otimes \bK$, they have the same $KK(\bC,\bA) = K_0(\bA)$-class).

\begin{prop}
Let $\bA$ be a unital, simple, purely infinite C*-algebra. Then
\begin{equation}
K_0(\bA) = \{[p]_0 \mid p \text{ is a non-zero projection in } \bA\}.
\end{equation}
Moreover, if $p,q \in \bA$ are non-trivial projections such that $[p]_0 = [q]_0$, then $p \simeq q$.
\end{prop}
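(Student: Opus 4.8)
The plan is to prove the two assertions of the proposition about a unital, simple, purely infinite C*-algebra $\bA$ by exploiting two facts: that such algebras are properly infinite (indeed every non-zero projection is properly infinite), and the $K$-theoretic classification of projections up to unitary equivalence. For the first assertion, I would show that every class in $K_0(\bA)$ is represented by a genuine projection in $\bA$ (not merely a formal difference of projections in matrix amplifications over $\bA$). The key mechanism is that in a unital, simple, purely infinite C*-algebra $\bA$, the unit $1_\bA$ is properly infinite, which forces $\bA$ to absorb matrix amplifications at the level of projections: any projection in $\bM_n(\bA)$ is Murray-von Neumann equivalent to a projection in $\bA$ itself. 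I would cite the standard structural result (\cite[Proposition 4.1.4]{RordamBook}) to obtain this, so that every element of $K_0(\bA)$, a priori of the form $[p]_0 - [q]_0$ with $p, q \in \bM_n(\bA)$, can first be written with $p, q \in \bA$ and then, using properly infinite projections to realize subtraction as an orthogonal sum, combined into a single class $[r]_0$ with $r \in \bA$ a non-zero projection.

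For the second assertion, the goal is to upgrade $[p]_0 = [q]_0$ in $K_0(\bA)$ to genuine unitary equivalence $p \simeq q$ for non-trivial projections $p, q \in \bA$. The approach I would take is to invoke the classification of projections in unital, simple, purely infinite C*-algebras directly: as indicated in the excerpt's parenthetical remark, one identifies projections in $\bA$ (or $\bA \otimes \bK$) with *-homomorphisms $\bC \to \bA \otimes \bK$ and observes that their $KK(\bC, \bA) = K_0(\bA)$-classes coincide precisely when $[p]_0 = [q]_0$. Applying \cite[Theorem 1.7]{Dadarlat95approx} then gives that $p$ and $q$ are approximately unitarily equivalent; since projections have finite spectrum, approximate unitary equivalence of projections is the same as unitary equivalence (one can perturb and absorb the small error, as approximately unitarily equivalent projections that are close must actually be unitarily equivalent). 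Alternatively, and more directly, I would cite \cite[Corollary 6.11.9]{BlackadarKBook}, which states exactly that in this setting equality of $K_0$-classes for non-zero (equivalently, properly infinite) projections implies Murray-von Neumann equivalence, and then pass from Murray-von Neumann equivalence of projections summing (with their complements) to the same class to genuine unitary equivalence.

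The main obstacle I anticipate is the passage from Murray-von Neumann equivalence to \emph{unitary} equivalence. Murray-von Neumann equivalence $p \MvN q$ only provides a partial isometry $v$ with $v^*v = p$ and $vv^* = q$; to obtain a unitary $u$ with $u^* p u = q$ one must also know that the complements $1_\bA - p$ and $1_\bA - q$ are Murray-von Neumann equivalent. Here is exactly where pure infiniteness is essential: for non-trivial projections $p, q$ in a unital, simple, purely infinite C*-algebra, both $1_\bA - p$ and $1_\bA - q$ are themselves non-zero and properly infinite, and their $K_0$-classes are $[1_\bA]_0 - [p]_0 = [1_\bA]_0 - [q]_0$, hence equal; applying the first part of the argument (equality of $K_0$-classes implies Murray-von Neumann equivalence) to the complements yields $1_\bA - p \MvN 1_\bA - q$. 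Combining the two partial isometries then assembles the desired unitary $u$. I would make sure to handle the degenerate cases (e.g. $p = 1_\bA$) separately, noting that the hypothesis of non-triviality is used precisely to keep the complements non-zero so that the pure-infiniteness machinery applies.
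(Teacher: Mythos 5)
Your proposal is correct and takes essentially the same route as the paper: the paper gives no written proof at all, simply citing \cite[Proposition 4.1.4]{RordamBook} and \cite[Corollary 6.11.9]{BlackadarKBook} (or alternatively \cite[Theorem 1.7]{Dadarlat95approx}), which are exactly the results you invoke. Your elaboration correctly fills in what those citations leave implicit, in particular the complement argument (non-triviality makes $1_\bA - p$ and $1_\bA - q$ non-zero with equal $K_0$-classes, hence $1_\bA - p \MvN 1_\bA - q$, so the two partial isometries assemble into a unitary) needed to upgrade Murray--von Neumann equivalence to unitary equivalence.
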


%%%%%%%%%%%%%%%%%%%%%%%%%%%%%%%%%%%%%%%%%%

Thus the $K_0$-group can be realized as the $K_0$-classes of non-zero projections in the C*-algebra itself, and two non-trivial projections in the C*-algebra have the same $K_0$-class if and only if they are unitarily equivalent (if and only if they are approximately unitarily equivalent, as close projections are unitarily equivalent).

We now show that there exist pairs of non-unitarily equivalent  projections in Cuntz algebras (more specifically in $\cO_3$) which nonetheless share unitarily equivalent amplifications. Note that $\cO_n \simeq \cO_n \otimes \bM_{n^{\infty}}$, so that $\cO_n$ will always satisfy [$\text{KP}_n$]. Moreover, $\cO_2$ absorbs every unital, simple, separable, nuclear C*-algebra \cite[Theorem 3.8]{KirchbergPhillips00}, so we have that $\cO_2 \otimes \cQ \simeq \cO_2$.  From this and Corollary \ref{cor:Q-stable algebras satisfy KP}, we deduce that any $a, b \in \cO_2$ satisfy \kpn for every $n \in \bN$.
%As we shall now see, however,, $\cO_3$ is the next best thing, and we show that it fails the (2)-fold Kaplansky problem, even for projections.

We begin by showing that any unital, simple, purely infinite C*-algebra with projections $p,q$ satisfying $n[p] = n[q]$ but $[p] \neq [q]$ in $K_0$, so that $p$ and $q$  fail to satisfy [$\text{KP}_n$].

%%%%%%%%%%%%%%%%%%%%%%%%%%%%%%%%%%%%%%%%%%

\begin{prop}\label{prop: non n-KP}
  Let $\bA$ be a unital, simple, purely infinite C*-algebra.
  Suppose that there exist $n \in \bN$ and $p,q \in \bA$ are non-trivial projections such that $n[p]_0 = n[q]_0$ but $[p]_0 \neq [q]_0$ in $K_0(\bA)$.
  Then
  \begin{equation}
    p^{(n)} \simeq q^{(n)} \text{ in } \bM_n(\bA) \text{ but } p \not\simeq q \text{ in } \bA.
  \end{equation}
  \begin{proof}
    We have that
    \begin{equation}
      [p^{(n)}]_0 = n[p]_0 = n[q]_0 = [q^{(n)}]_0.
    \end{equation}
    As $\bM_n(\bA)$ is unital, simple, and purely infinite, it follows that $p^{(n)} \simeq q^{(n)}$.
    By assumption, we have that $p \not\simeq q$ in $\bA$.
  \end{proof}
\end{prop}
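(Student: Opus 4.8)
The plan is to reduce everything to the Proposition stated immediately above, which characterizes unitary equivalence of non-trivial projections in a unital, simple, purely infinite C*-algebra in terms of equality of their $K_0$-classes. The positive assertion $p^{(n)} \simeq q^{(n)}$ will follow from a $K_0$-computation carried out inside $\bM_n(\bA)$, while the negative assertion $p \not\simeq q$ is immediate from the functoriality of $K_0$.

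First I would record the elementary but essential fact that amplification multiplies the $K_0$-class. Since $p^{(n)}$ is by definition the $n$-fold direct sum of $p$ with itself, and since addition in $K_0$ is implemented precisely by direct sums of projections in matrix algebras, one has $[p^{(n)}]_0 = n[p]_0$, and similarly $[q^{(n)}]_0 = n[q]_0$. Feeding in the standing hypothesis $n[p]_0 = n[q]_0$ then gives $[p^{(n)}]_0 = [q^{(n)}]_0$ in $K_0(\bM_n(\bA)) \cong K_0(\bA)$.

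Next I would observe that $\bM_n(\bA)$ is again unital, simple, and purely infinite, so that the preceding Proposition applies verbatim with $\bA$ replaced by $\bM_n(\bA)$. Because $p$ and $q$ are non-zero, so are $p^{(n)}$ and $q^{(n)}$; being non-trivial projections in $\bM_n(\bA)$ with equal $K_0$-classes, they are therefore unitarily equivalent, which yields $p^{(n)} \simeq q^{(n)}$.

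For the negative direction I would argue by contraposition: if $p \simeq q$ in $\bA$, then $p$ and $q$ are in particular Murray--von Neumann equivalent (take $v = pu$ when $q = u^{*}pu$), so $[p]_0 = [q]_0$ in $K_0(\bA)$, contradicting the hypothesis $[p]_0 \neq [q]_0$. Hence $p \not\simeq q$. There is no real obstacle here; the only points meriting a word of justification are that passing to a matrix amplification preserves the three structural hypotheses (unital, simple, purely infinite) -- which is standard -- and the harmless observation that amplifications of non-zero projections stay non-zero, so that the citing of the preceding Proposition is legitimate.
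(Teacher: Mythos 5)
Your proof is correct and follows essentially the same route as the paper's: compute $[p^{(n)}]_0 = n[p]_0 = n[q]_0 = [q^{(n)}]_0$, invoke the preceding proposition in the unital, simple, purely infinite algebra $\bM_n(\bA)$ to get $p^{(n)} \simeq q^{(n)}$, and note that $p \simeq q$ would force $[p]_0 = [q]_0$, contradicting the hypothesis. Your write-up merely makes explicit a few points the paper leaves implicit (non-triviality of the amplified projections, and unitary equivalence implying Murray--von Neumann equivalence), which is fine.
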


%%%%%%%%%%%%%%%%%%%%%%%%%%%%%%%%%%%%%%%%%%

Using the fact that $K_0(\bM_n(\cO_3)) = \bZ_2$ (see \cite{Cuntz81}), we obtain the following.

%%%%%%%%%%%%%%%%%%%%%%%%%%%%%%%%%%%%%%%%%%

\begin{cor}
Let $k \in \bN$. There exist two projections in $\bM_k(\cO_3)$ which fail to satisfy [$\text{KP}_2$].
\end{cor}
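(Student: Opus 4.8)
The plan is to reduce the statement to a direct application of Proposition~\ref{prop: non n-KP} with $n = 2$, the only genuine content being the realization of two suitable $K_0$-classes by non-zero projections. First I would record that for every $k \in \bN$ the algebra $\bM_k(\cO_3)$ is again unital, simple, and purely infinite (matrix amplifications preserve these properties), and that $K_0$ is invariant under matrix amplification, so that $K_0(\bM_k(\cO_3)) \cong K_0(\cO_3) \cong \bZ_2$ by \cite{Cuntz81}. The crucial arithmetic observation is that $\bZ_2$ is $2$-torsion: every element $x \in \bZ_2$ satisfies $2x = 0$.

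Next I would produce the two projections. By the Proposition immediately preceding Proposition~\ref{prop: non n-KP}, in a unital, simple, purely infinite C*-algebra every class in $K_0$ is of the form $[p]_0$ for a non-zero projection $p$; in particular, both the zero class and the unique non-zero class of $\bZ_2$ are realized by non-trivial projections $p, q \in \bM_k(\cO_3)$ with $[p]_0 = 0$ and $[q]_0 \neq 0$. (Concretely, since $\cO_3$ is properly infinite with $2[1]_0 = 0$, a non-zero subprojection of the unit representing $[1]_0 + [1]_0$ has trivial class, while the unit itself has non-trivial class.)

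Then $2[p]_0 = 0 = 2[q]_0$ while $[p]_0 \neq [q]_0$, so the hypotheses of Proposition~\ref{prop: non n-KP} are met with $n = 2$. I would invoke it to conclude that $p^{(2)} \simeq q^{(2)}$ in $\bM_2(\bM_k(\cO_3))$ while $p \not\simeq q$ in $\bM_k(\cO_3)$. Finally, since for projections in such algebras unitary equivalence, approximate unitary equivalence, and equality of $K_0$-class all coincide (as noted after the preceding Proposition), $p \not\simeq q$ upgrades to $p \not\simeq_a q$, whereas $p^{(2)} \simeq q^{(2)}$ gives $p^{(2)} \simeq_a q^{(2)}$; thus $p$ and $q$ fail $[\text{KP}_2]$.

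I expect no serious obstacle here, as the argument is essentially bookkeeping layered on top of Proposition~\ref{prop: non n-KP}. The only point requiring a moment of care is verifying that the trivial $K_0$-class is attained by a genuinely non-zero projection, so that the ``non-trivial'' hypothesis of Proposition~\ref{prop: non n-KP} is satisfied; this is precisely where proper infiniteness of $\cO_3$ enters, and it is guaranteed by the structural description $K_0(\bA) = \{[p]_0 : p \neq 0\}$ recorded just above.
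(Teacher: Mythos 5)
Your overall route is exactly the paper's: the corollary is intended to drop out of Proposition~\ref{prop: non n-KP} with $n=2$, using $K_0(\bM_k(\cO_3)) \cong K_0(\cO_3) \cong \bZ_2$ from \cite{Cuntz81}, realizing the two classes of this $2$-torsion group by projections, and then upgrading $p \not\simeq q$ to $p \not\simeq_a q$ via the observation that close projections are unitarily equivalent. The paper gives no more detail than this, so your proposal is a filled-in version of the same argument.

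There is, however, one step that would fail as written: your concrete choice of $q$ as the unit, together with your reading of ``non-trivial'' as merely ``non-zero.'' In Proposition~\ref{prop: non n-KP} (and in the ``Moreover'' clause of the Proposition preceding it), ``non-trivial'' must exclude both $0$ and $1$. The reason is that the unitary orbit of the unit is the singleton $\{1\}$: conjugation fixes the unit, so the only projection unitarily equivalent -- or approximately unitarily equivalent, since close projections are unitarily equivalent -- to $q^{(2)} = 1_{\bM_2(\bM_k(\cO_3))}$ is that unit itself. Hence if $q = 1$ and $[p]_0 \neq [q]_0$ (so $p \neq 1$), then $p^{(2)} \not\simeq_a q^{(2)}$, and the pair $(p,1)$ satisfies [$\text{KP}_2$] \emph{vacuously} rather than failing it; the same observation shows the ``Moreover'' clause would be false if the unit were allowed. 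The repair is short and stays inside the paper's argument: pick non-zero projections realizing the two classes of $\bZ_2$ (as the structure Proposition permits), and if either happens to be the unit, replace it by a proper subprojection $e \lneq 1$ with $e \MvN 1$, which exists because the unit of a unital, simple, purely infinite C*-algebra is properly infinite; this preserves the $K_0$-class and makes both projections non-trivial in the required sense, after which Proposition~\ref{prop: non n-KP} applies verbatim. Note also that your parenthetical claim ``the unit itself has non-trivial class'' is only correct for odd $k$, since $[1_{\bM_k(\cO_3)}]_0 = k[1_{\cO_3}]_0$ vanishes in $\bZ_2$ for even $k$ -- a further reason to prefer the abstract repair over the concrete one you propose.
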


%%%%%%%%%%%%%%%%%%%%%%%%%%%%%%%%%%%%%%%%%%

There is nothing special about [$\text{KP}_2]$ in this regard.   More generally, we can find examples of pairs of operators which do not satisfy [$\text{KP}_n$], even uncountably many. This is because unital, simple, nuclear, purely infinite C*-algebras are in abundance. We can realize any triple $(G_0,g_0,G_1)$, where $G_0,G_1$ are countable abelian groups and $g_0 \in G_0$ is some distinguished element, as the $K$-theory, together with the position of the unit in $K_0$, of such an algebra.
One can also realize $G_0,G_1$ as the K-theory of a stable, separable, simple, nuclear, purely infinite C*-algebra (without keeping track of the unit).
See \cite[Proposition 4.3.3]{RordamBook} (as well as \cite{Rordam95} and \cite{ElliottRordam95}).

%%%%%%%%%%%%%%%%%%%%%%%%%%%%%%%%%%%%%%%%%%

\begin{prop}\label{prop: Kirchberg algebra existence}
Let $G_0,G_1$ be countable abelian groups and $g_0 \in G_0$ be some distinguished element.
There is a unital, separable, simple, nuclear, purely infinite C*-algebra $\bA$ satisfying the UCT such that
\begin{equation}
(K_0(\bA),[1_A]_0,K_1(\bA)) \simeq (G_0,g_0,G_1).
\end{equation}
\end{prop}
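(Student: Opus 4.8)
The plan is to deduce the unital realization result from the corresponding statement for \emph{stable} Kirchberg algebras by cutting down with a full projection. First I would invoke the range-of-invariant theorem for the stable class: for any pair $G_0, G_1$ of countable abelian groups there is a stable, separable, simple, nuclear, purely infinite C*-algebra $\bA_0$ satisfying the UCT with $K_0(\bA_0) \cong G_0$ and $K_1(\bA_0) \cong G_1$ (this is \cite[Proposition 4.3.3]{RordamBook}; see also \cite{Rordam95} and \cite{ElliottRordam95}). The construction underlying this statement proceeds by presenting each $G_i$ by a two-step free resolution $0 \to F_1 \to F_0 \to G_i \to 0$, realising the free groups $F_0, F_1$ as the $K_0$-groups of countable direct sums of copies of $\cO_{\infty} \otimes \bK$, realising the resolution map by an honest *-homomorphism, and extracting $G_i$ from the $K$-theory of an associated mapping cone; tensoring with $\cO_{\infty} \otimes \bK$ and using the K\"unneth theorem, together with a suspension to interchange the roles of $K_0$ and $K_1$, then yields a single stable algebra carrying both prescribed groups while remaining simple, nuclear, purely infinite, and in the UCT class.

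Second, I would realise the distinguished class $g_0 \in G_0 = K_0(\bA_0)$ by an actual projection. Since $\bA_0$ is stable, simple, and purely infinite, every element of $K_0(\bA_0)$ has the form $[p]_0$ for some non-zero projection $p \in \bA_0$ (the stable analogue of the Proposition immediately preceding this one). I fix such a $p$ with $[p]_0 = g_0$; because $\bA_0$ is simple, $p$ is automatically full.

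Third, I set $\bA := p\bA_0 p$ and verify the required properties. It is unital with unit $p$, and separable. Being a hereditary subalgebra of the simple, nuclear, purely infinite algebra $\bA_0$, it is itself simple, nuclear, and purely infinite. Because $p$ is full, Brown's stabilisation theorem shows that $\bA$ is Morita equivalent, hence stably isomorphic, to $\bA_0$ (indeed $\bA \otimes \bK \simeq \bA_0 \otimes \bK \simeq \bA_0$); as the UCT is a stable-isomorphism invariant, $\bA$ satisfies the UCT. The same stable isomorphism induces a natural isomorphism $K_*(\bA) \cong K_*(\bA_0)$ implemented by the inclusion $\bA \into \bA_0$, under which $[1_{\bA}]_0 = [p]_0$ corresponds to $g_0$. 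Therefore $(K_0(\bA), [1_{\bA}]_0, K_1(\bA)) \simeq (G_0, g_0, G_1)$, as required.

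The genuinely substantial step is the first one: the construction of a stable Kirchberg algebra with arbitrarily prescribed countable abelian $K_0$ and $K_1$. The corner construction in the remaining steps is routine bookkeeping once one has Brown's theorem and the fact that $K_0$ of a stable, simple, purely infinite C*-algebra consists precisely of classes of projections; the only points requiring care are the permanence of simplicity, nuclearity, pure infiniteness, and the UCT under passage to a full corner, all of which are standard.
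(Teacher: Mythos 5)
Your proposal is correct. The paper offers no proof of this proposition at all --- it simply cites \cite[Proposition 4.3.3]{RordamBook} (together with \cite{Rordam95} and \cite{ElliottRordam95}) --- and your argument (stable realization of $(G_0,G_1)$, choice of a projection $p \in \bA_0$ with $[p]_0 = g_0$, passage to the full corner $p\bA_0 p$, and Brown's stabilisation theorem to transfer simplicity, nuclearity, pure infiniteness, the UCT, and the pointed $K$-theory) is precisely the standard derivation given in that reference, so your route and the paper's coincide, with the only unproved ingredient being the stable realization theorem that both you and the paper invoke from the literature.
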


%%%%%%%%%%%%%%%%%%%%%%%%%%%%%%%%%%%%%%%%%%

We can also obtain examples of non-separable C*-algebras (arising from Corona algebras) where Kaplansky's problem admits a negative solution.
Recall that the \emph{multiplier algebra} of a C*-algebra  $\bA$ is the largest unital C*-algebra $\bA$ which contains $\bA$ as an essential ideal. It is in some sense the ``largest'' unitization of $\bA$, and is often thought of as the non-commutative analogue of the Stone--\v{C}ech compactification.
The simplicity of the Corona algebra $\cQ(\bA) := \cM(\bA \otimes \bK)/\bA \otimes \bK$, where $A$ is unital, was characterized by R{\o}rdam in \cite[Theorem 3.2]{Rordam91Ideals}, where it was shown that $\cQ(\bA)$ was simple if and only if $\bA \simeq \bM_n$ for some $n \in \bN$, or $\bA$ was simple and purely infinite.
Additionally, Zhang \cite[Theorem 1.3]{Zhang89} showed that $\cM(\bB)/\bB$ is purely infinite whenever $\bB$ is a $\sigma$-unital simple C*-algebra with real rank zero.
Combining these two results, we have that have $\cQ(\bA)$ is unital, simple, and purely infinite whenever $\bA$ is unital, $\sigma$-unital, simple, and purely infinite.

%%%%%%%%%%%%%%%%%%%%%%%%%%%%%%%%%%%%%%%%%%

\begin{cor} \emph{\textbf{[Corona algebras]}}
Fix $n \geq 2$. Let $\bA$ be any unital, separable, simple, purely infinite C*-algebra such that $K_1(\bA)$ has $n$-torsion. Then there exists a pair of projections in $\cQ(\bA)$ which do not satisfy \emph{[$\text{KP}_n$]}.
\begin{proof}
The multiplier algebra $\cM(\bA \otimes \bK)$ has trivial $K$-theory by \cite[Proposition 12.2.1]{BlackadarKBook} (this is often referred to as the Kuiper-Mingo theorem, see \cite[Chapter 16]{WeggeOlsenBook}). Thus, using the 6-term exact sequence in $K$-theory, we have that
\begin{equation}
K_0(\cQ(\bA)) = K_1(\bA) \text{ and } K_1(\cQ(\bA)) = K_0(\bA).
\end{equation}
Now, identifying $K_0(\cQ(\bA))$ with $K_1(\bA)$ and using the fact that $\cQ(\bA)$ is unital, simple, and purely infinite, and that $K_1(\bA)$ has $n$-torsion by hypothesis, it follows that there are distinct projections $p,q \in \cQ(\bA)$ such that $n[p] = n[q]$ but $[p] \neq [q]$ in $K_0(\cQ(\bA))$.
Thus $p^{(n)} \simeq q^{(n)}$ but $p \not\simeq q$ by Proposition \ref{prop: non n-KP}.
\end{proof}
\end{cor}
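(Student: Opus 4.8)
The plan is to assemble the structural and $K$-theoretic facts recalled immediately before the statement. Since $\bA$ is unital, separable (and therefore $\sigma$-unital), simple, and purely infinite, the combination of R{\o}rdam's simplicity criterion and Zhang's pure infiniteness result quoted above shows that the Corona algebra $\cQ(\bA) = \cM(\bA \otimes \bK)/(\bA \otimes \bK)$ is again unital, simple, and purely infinite. This is the only hypothesis needed to invoke Proposition~\ref{prop: non n-KP}; in particular the non-separability of $\cQ(\bA)$ is irrelevant here.

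The next step is to identify $K_0(\cQ(\bA))$ and locate $n$-torsion inside it. I would use the Kuiper--Mingo theorem (\cite[Proposition 12.2.1]{BlackadarKBook}), which gives $K_0(\cM(\bA \otimes \bK)) = K_1(\cM(\bA \otimes \bK)) = 0$, and feed this into the six-term exact sequence attached to the extension $0 \to \bA \otimes \bK \to \cM(\bA \otimes \bK) \to \cQ(\bA) \to 0$. Because the middle $K$-groups vanish, both boundary maps are forced to be isomorphisms, yielding $K_0(\cQ(\bA)) \cong K_1(\bA \otimes \bK) \cong K_1(\bA)$ and $K_1(\cQ(\bA)) \cong K_0(\bA)$ after invoking stability of $K$-theory. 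The hypothesis that $K_1(\bA)$ has $n$-torsion then supplies a nonzero class $x \in K_0(\cQ(\bA))$ with $nx = 0$.

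Finally, I would realize this torsion class by projections and apply the obstruction. As $\cQ(\bA)$ is unital, simple, and purely infinite, the proposition recalled above -- that every $K_0$-class is represented by a nonzero projection, and that projections with equal $K_0$-class are unitarily equivalent -- lets me fix a nonzero projection $q$ and choose a nonzero projection $p$ with $[p]_0 = [q]_0 + x$. Then $[p]_0 \neq [q]_0$ while $n[p]_0 = n[q]_0$, which is precisely the input of Proposition~\ref{prop: non n-KP}. That proposition gives $p^{(n)} \simeq q^{(n)}$ in $\bM_n(\cQ(\bA))$ but $p \not\simeq q$ in $\cQ(\bA)$; since unitary and approximate unitary equivalence coincide for projections in a unital simple purely infinite C*-algebra, this is precisely the statement that $p$ and $q$ do not satisfy \kpn in $\cQ(\bA)$.

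I expect the only delicate point to be bookkeeping rather than analysis: one must check that the boundary maps of the six-term sequence deliver the concrete identification $K_0(\cQ(\bA)) \cong K_1(\bA)$ (and not merely an abstract isomorphism of groups) and that the chosen $p$ and $q$ are genuinely nonzero with distinct classes. Both are immediate once the realization of $K_0$-classes by projections in the purely infinite simple setting is in hand, so no serious obstacle remains.
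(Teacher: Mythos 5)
Your proposal is correct and follows essentially the same route as the paper: establish that $\cQ(\bA)$ is unital, simple, and purely infinite via the R{\o}rdam--Zhang results, use Kuiper--Mingo and the six-term exact sequence to identify $K_0(\cQ(\bA)) \cong K_1(\bA)$, realize the resulting $n$-torsion by a pair of nonzero projections with distinct $K_0$-classes, and apply Proposition~\ref{prop: non n-KP}. Your only addition is making explicit the bookkeeping step $[p]_0 = [q]_0 + x$, which the paper leaves implicit.
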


Now we show that torsion in the $K_1$ group is also an obstruction.
Skoufranis \cite{Skoufranis13} uses the generalised index function $\Gamma$ from \cite{Lin96} to extend the BDF result by demonstrating  that two normal elements with equal spectrum in  a unital, simple, purely infinite C*-algebra  are approximately unitarily equivalent if there are no $K$-theoretic obstructions.

%%%%%%%%%%%%%%%%%%%%%%%%%%%%%%%%%%%%%%%%%%

The generalised index is defined as follows: if $\bA$ is a unital C*-algebra and $n \in \bA$ is a normal operator, then we have an embedding of $C(\sigma(n)) \to \bA$ arising from continuous functional calculus: it takes the identity function and maps it to $n$. This homomorphism induces a map at the level of $K_1$, and the generalised index is precisely the map $\Gamma(n): K_1(C(\sigma(n)) \to K_1(\bA)$.

%%%%%%%%%%%%%%%%%%%%%%%%%%%%%%%%%%%%%%%%%%

The following is \cite[Theorem 2.16]{Skoufranis13}.

\begin{theorem}\label{thm: Skoufranis unitary orbits in pi}
Let $\bA$ be a unital, simple, purely infinite C*-algebra and let $n_1,n_2 \in \bA$ be normal operators. Suppose
\begin{enumerate}
\item $\sigma(n_1) = \sigma(n_2)$;
\item $\Gamma(n_1) = \Gamma(n_2)$;
\item $n_1$ and $n_2$ have equivalent spectral projections.
\end{enumerate}
Then $n_1 \simeq_a n_2$.
\end{theorem}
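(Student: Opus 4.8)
The plan is to transport the problem to $*$-homomorphisms out of a commutative C*-algebra and then feed it into Dadarlat's $KL$-classification quoted above. Set $X = \sigma(n_1) = \sigma(n_2) \subseteq \bC$ and let $\varphi_1, \varphi_2 \colon C(X) \to \bA$ be the unital $*$-homomorphisms obtained from continuous functional calculus, so that each $\varphi_i$ carries the coordinate function $z$ to $n_i$. The first step is to observe that $n_1 \simeq_a n_2$ if and only if $\varphi_1 \simeq_a \varphi_2$: one implication is immediate by evaluating at $z$, and the converse is a routine $\ee/3$-argument, using that conjugation by a unitary is isometric and that polynomials in $z$ and $\bar{z}$ are dense in $C(X)$ by Stone--Weierstrass. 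By the theorem of Dadarlat stated above, $\varphi_1 \simeq_a \varphi_2$ is equivalent to the equality $[\varphi_1] = [\varphi_2]$ in $KL(C(X), \bA)$, so it suffices to check that hypotheses (1)--(3) force these classes to coincide.

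Next I would reduce the $KL$-class to the pair of induced maps on $K$-theory. Since $X$ is planar, its \v{C}ech cohomology vanishes above degree $1$ and $\check{H}^1(X)$ is free abelian (by Alexander duality in $S^2$), so $K_1(C(X)) \cong \check{H}^1(X)$ is free; meanwhile $K_0(C(X)) \cong \check{H}^0(X) = C(X,\bZ)$ is free for any compact $X$ (a subgroup of the free abelian group of all bounded $\bZ$-valued functions, by N\"obeling's theorem). With $K_*(C(X))$ free, the $\mathrm{Ext}$-term in the universal coefficient theorem for $KK(C(X), \bA)$ vanishes, whence $KL(C(X), \bA) = KK(C(X), \bA) \cong \mathrm{Hom}(K_*(C(X)), K_*(\bA))$. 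Under this identification the class $[\varphi_i]$ is precisely the pair of homomorphisms $(\varphi_i)_* \colon K_0(C(X)) \to K_0(\bA)$ and $(\varphi_i)_* \colon K_1(C(X)) \to K_1(\bA)$, so it remains to match these.

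On $K_1$ there is nothing to do: the induced map $(\varphi_i)_*$ on $K_1$ is, by definition, the generalised index $\Gamma(n_i)$, so hypothesis (2) gives agreement there. On $K_0$, the planar (one-dimensional) structure means there are no higher Chern-class contributions, so $K_0(C(X)) = \check{H}^0(X)$ is generated by the classes $[\chi_U]$ of characteristic functions of relatively clopen subsets $U \subseteq X$. Since $(\varphi_i)_*[\chi_U] = [\chi_U(n_i)]_0$ is the class of a spectral projection of $n_i$, hypothesis (3) says that $(\varphi_1)_*$ and $(\varphi_2)_*$ agree on these generators, hence on all of $K_0(C(X))$. Combining the two, $[\varphi_1] = [\varphi_2]$ in $KL(C(X), \bA)$, and unwinding the two reductions gives $n_1 \simeq_a n_2$.

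The step I expect to be most delicate is the middle one --- certifying that the full $KL$-class is determined by the integral $K_0$- and $K_1$-maps alone. This is exactly where planarity is indispensable: for a general compact metrizable $X$ one would have to control total $K$-theory with all $\bZ/m$-coefficients (through the Dadarlat--Loring universal multicoefficient theorem), and genuine torsion obstructions could appear, so the $KL$-group would no longer be captured by $\mathrm{Hom}(K_*(C(X)), K_*(\bA))$. One must therefore argue the freeness of $K_*(C(X))$ --- and hence the vanishing of the relevant $\mathrm{Ext}$/$\mathrm{Pext}$ term --- carefully rather than assert it, and verify that condition (3) really does encode the full $K_0$-map. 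The remaining ingredients --- the passage between $n_i$ and $\varphi_i$, and the identification of $(\varphi_i)_*$ with the index and with spectral projections on generators --- are routine.
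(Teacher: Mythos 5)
Your proposal is correct, but there is nothing in the paper to compare it against: the paper does not prove this statement at all, it simply quotes it as Theorem 2.16 of \cite{Skoufranis13}. Your argument is thus a genuine independent derivation, and a natural one, since it runs through Dadarlat's $KL$-classification theorem, which the paper states immediately beforehand. The skeleton is sound: functional calculus converts the problem into comparing the unital $*$-homomorphisms $\varphi_i\colon C(X)\to\bA$; Dadarlat reduces approximate unitary equivalence to equality in $KL(C(X),\bA)$; freeness of $K_*(C(X))$ kills the $\mathrm{Ext}$ (hence $\mathrm{Pext}$) term in the UCT, so $KL(C(X),\bA)\cong \mathrm{Hom}(K_*(C(X)),K_*(\bA))$; and on the two summands the induced maps are exactly the generalized index (hypothesis (2)) and the $K_0$-classes of spectral projections of clopen subsets of $X$, which generate $K_0(C(X))$ and are matched by hypothesis (3). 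This is a more conceptual route than the operator-theoretic development in Skoufranis's paper, which builds on Lin's generalized index and approximation machinery in real rank zero C*-algebras (recall unital simple purely infinite C*-algebras have real rank zero). Two points in your write-up should be tightened, though neither is a gap in substance. First, ``planar (one-dimensional) structure'' is not accurate: compact planar sets can have covering dimension $2$ (a closed disk, for instance); what you actually need is $\check{H}^2(X)=0$, which does follow from Alexander duality in $S^2$, and the isomorphisms $K_0(C(X))\cong\check{H}^0(X)$ and $K_1(C(X))\cong\check{H}^1(X)$ then deserve a real justification --- for example, write $X$ as an inverse limit of compact planar polyhedra, each homotopy equivalent to a finite graph, and use continuity of $K$-theory and of \v{C}ech cohomology under such limits. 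Second, the sentence ``$K_0(C(X))\cong\check{H}^0(X)=C(X,\bZ)$ is free for any compact $X$'' conflates two claims: the freeness of $C(X,\bZ)$ (N\"obeling) holds for every compact $X$, but the identification of this group with $K_0(C(X))$ is special to the planar case and fails already for $X=S^2$.
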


%%%%%%%%%%%%%%%%%%%%%%%%%%%%%%%%%%%%%%%%%%

There are two things of note when $u$ is a full spectrum unitary. The first is that there will be no non-trivial spectral projections in the C*-algebra, due to the connectedness of $\bT$. The second is that $C(\sigma(u)) = C(\bT)$, and it is well-known that $K_1(C(\bT))$ is generated by the class of the identity function $\id_\bT: z \mapsto z$.
Since $\id_\bT$ generates $C(\bT) \simeq C^*(u)$, and the inclusion $C(\bT) \into \bA$ is given by $\id_\bT \mapsto u$, the generalised index simply records the $K_1$-class of $u$.
As a consequence, two full spectrum unitaries in a unital, simple, purely infinite C*-algebra are approximately unitarily equivalent if and only if they represent the same class in $K_1$.

%%%%%%%%%%%%%%%%%%%%%%%%%%%%%%%%%%%%%%%%%%

\begin{prop}
There is a unital simple purely infinite C*-algebra $\bA$ for which each pair of projections in $\bA$ satisfies $[\text{KP}_n]$ for every $n \in \bN$, and yet $\bA$ admits a pair of unitary elements for which $[\text{KP}_2]$ fails.
\begin{proof}
Suppose that $\bB$ is a unital, simple, purely infinite C*-algebra with $K_0(\bB) =0$ and $K_1(\bB) = \bZ_2$, which we know exists by Proposition \ref{prop: Kirchberg algebra existence}.  For a unital, simple, and purely infinite C*-algebra, it well-known that the canonical map $U(\bB)/U^0(\bB) \to K_1(\bB)$ is an isomorphism (see for example \cite[Proposition 8.1.4]{BlackadarKBook}), and so there is a unitary $\tilde{u} \in \bB$ such that $[\tilde{u}] \neq 0$ in $K_1(\bB)$, which simply means that $\tilde{u}$ is not connected to the identity. Note that $\tilde{u}$ must have full spectrum $\sigma(\tilde{u}) = \bT$ as unitary elements whose spectrum is not all of $\bT$  are connected to the identity.

Let $\bA := \bM_2(\bB)$, and note that in fact
\begin{equation}
U(\bB)/U^0(\bB) \simeq U(\bA)/U^0(\bA) \simeq K_1(\bA),
\end{equation}
all via the canonical maps. Consider the unitary elements $u = \tilde{u} \oplus \tilde{u}, v = \tilde{u} \oplus 1_{\bB} \in \bA$.
As $[u] = [\tilde{u}] + [\tilde{u}] = 0$ in $K_1(\bA)$ and $[v] = [\tilde{u}] \neq 0$ in $K_1(\bA)$, it follows that $\Gamma(u) \neq \Gamma(v)$, and so $u \not\simeq_a v$.

Next we show that $u^{(2)} \simeq_a v^{(2)}$ in $\bM_2(\bA)$. In fact, we know that they are approximately unitarily equivalent if and only if they have the same spectrum, $K_1$-class, and all of their spectral projections are equivalent.
Clearly $\sigma(u) = \sigma(v) = \bT$, and lastly, we have that
\begin{equation}
[u \oplus u] = [v \oplus v] = 0
\end{equation}
in $K_1(\bM_4(\bB)) = K_1(\bM_2(\bA))$, meaning that $u^{(2)} \simeq_a v^{(2)}$.

Finally, again, all non-trivial projections in $\bM_n(\bA)$ are unitarily equivalent, for any $n \in \bN$, and so each pair of projections in $\bA$ satisfies the \kpn for any $n \in \bN$.
\end{proof}
\end{prop}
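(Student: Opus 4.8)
The plan is to exhibit a single unital, simple, purely infinite C*-algebra $\bA$ that simultaneously has trivial $K_0$ (so all projections behave uniformly) but nontrivial torsion in $K_1$ (so unitaries can serve as the counterexample witnesses). The natural candidate is $\bA := \bM_2(\bB)$, where $\bB$ is a Kirchberg-type algebra with $K_0(\bB) = 0$ and $K_1(\bB) = \bZ_2$; such a $\bB$ exists by Proposition~\ref{prop: Kirchberg algebra existence}. Since $\bM_2(\bB)$ is again unital, simple, and purely infinite, and since $K$-theory is invariant under matrix stabilization, we retain $K_0(\bA) = 0$ and $K_1(\bA) = \bZ_2$.

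\emph{First I would} dispose of the projection claim. Because $K_0(\bM_n(\bA)) = K_0(\bA) = 0$ for every $n$, any two nontrivial projections in $\bM_n(\bA)$ have the same ($=0$) $K_0$-class, hence are unitarily equivalent by Proposition~\ref{prop: non n-KP}'s underlying principle (the $K_0$-classification of projections in unital simple purely infinite C*-algebras). In particular $[\text{KP}_n]$ holds \emph{trivially} for every pair of projections and every $n$, since the amplifications and the originals are all mutually unitarily equivalent.

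\emph{Next I would} build the unitary counterexample. Using the canonical isomorphism $U(\bB)/U^0(\bB) \simeq K_1(\bB) = \bZ_2$, choose a unitary $\tilde{u} \in \bB$ representing the nontrivial class; such a $\tilde{u}$ cannot have proper spectrum (a unitary with $\sigma \neq \bT$ is homotopic to $1$), so $\sigma(\tilde{u}) = \bT$. Set $u = \tilde{u} \oplus \tilde{u}$ and $v = \tilde{u} \oplus 1_\bB$ in $\bA = \bM_2(\bB)$. Then $[u] = 2[\tilde{u}] = 0$ while $[v] = [\tilde{u}] \neq 0$ in $K_1(\bA)$, and since both are full-spectrum unitaries, the generalised index $\Gamma$ records exactly the $K_1$-class; by Theorem~\ref{thm: Skoufranis unitary orbits in pi} (or its full-spectrum-unitary specialization) we conclude $u \not\simeq_a v$. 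To verify $u^{(2)} \simeq_a v^{(2)}$, I would check the three hypotheses of Theorem~\ref{thm: Skoufranis unitary orbits in pi}: both have full spectrum $\bT$; there are no nontrivial spectral projections (connectedness of $\bT$), so condition (3) is vacuous; and $[u^{(2)}] = 2[u] = 0 = 2[v] = [v^{(2)}]$ in $K_1(\bM_2(\bA))$, giving equal generalised indices.

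\emph{The main obstacle} I anticipate is bookkeeping the $K_1$ identifications across the amplifications: one must be careful that the isomorphisms $U(\bB)/U^0(\bB) \simeq U(\bA)/U^0(\bA) \simeq K_1(\bA)$ are all the canonical stabilization maps, so that the additive computation $[u] = [\tilde u] + [\tilde u]$ is genuinely addition in $\bZ_2$ (forcing $[u] = 0$) rather than an artifact of a non-canonical identification. Once this is pinned down, the torsion $2 = 0$ in $\bZ_2$ does all the work, exactly mirroring how integer-valued index obstructions are \emph{avoided} in the Calkin setting but reappear here through $K_1$-torsion.
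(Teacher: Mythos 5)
Your proposal is correct and follows essentially the same route as the paper's own proof: the same algebra $\bA = \bM_2(\bB)$ with $K_0(\bB)=0$, $K_1(\bB)=\bZ_2$, the same witnesses $u = \tilde{u}\oplus\tilde{u}$ and $v = \tilde{u}\oplus 1_\bB$, the same use of the generalised index and Theorem~\ref{thm: Skoufranis unitary orbits in pi} for both the failure of $[\text{KP}_2]$ and the equivalence $u^{(2)}\simeq_a v^{(2)}$, and the same observation that triviality of $K_0$ makes all projection pairs satisfy $[\text{KP}_n]$.
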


%%%%%%%%%%%%%%%%%%%%%
% Bibliography
%%%%%%%%%%%%%%%%%%%%%

%%%%%%%%%%%%%%%%%%%%%%%%%%%%%%%%%%%%%%%%%%
%%% commenting out bilbio file and transferring over, so we have a single tex file
%  \bibliographystyle{amsalpha}
%  \bibliography{biblio}
%%%%%%%%%%%%%%%%%%%%%%%%%%%%%%%%%%%%%%%%%%
%%%%%%%%%%%%%%%%%%%%%%%%%%%%%%%%%%%%%%%%%%
%%%%%%%%%%%%%%%%%%%%%%%%%%%%%%%%%%%%%%%%%%
% note that the {AGKEP24} is there because it is the longest thing
%%%%%%%%%%%%%%%%%%%%%%%%%%%%%%%%%%%%%%%%%%
%%%%%%%%%%%%%%%%%%%%%%%%%%%%%%%%%%%%%%%%%%
%%%%%%%%%%%%%%%%%%%%%%%%%%%%%%%%%%%%%%%%%%

\end{document}